\documentclass[12pt,usenames,dvipsnames]{amsart}

\usepackage{algorithm}
\usepackage{algpseudocode}
\usepackage{mathpazo}
\usepackage{hyperref}
\usepackage{enumitem}
\usepackage{cleveref}
\usepackage{bm}
\usepackage{comment}
\usepackage[margin=1in]{geometry}
\usepackage{enumitem}
\usepackage{calrsfs}
\DeclareMathAlphabet{\pazocal}{OMS}{zplm}{m}{n}

\usepackage{verbatim}
\usepackage[center]{caption}

\usepackage{graphicx}
\usepackage{tikz-cd}
\usepackage{multirow}

\usepackage[cmtip, all]{xy}
\usepackage{array}

\usepackage{amssymb,amsmath,latexsym,amsthm}
\newcommand{\K}{\mathbb{K}}

\DeclareMathOperator{\Hom}{Hom}

\DeclareMathOperator{\PGL}{PGL}
\DeclareMathOperator{\GL}{GL}

\DeclareMathOperator{\Spec}{Spec}

\DeclareMathOperator{\Dr}{Dr}
\DeclareMathOperator{\trop}{trop}

\newcommand{\init}{\mathsf{in}}

\DeclareMathOperator{\Gr}{Gr}

\DeclareMathOperator{\Conv}{conv}
\DeclareMathOperator{\face}{face}

\DeclareMathOperator{\Trop}{Trop}

\DeclareMathOperator{\TGr}{TGr}

\newcommand{\pC}{\pazocal{C}}

\newcommand{\field}[1]{\mathbb{#1}}

\newcommand{\Q}{\field{Q}}

\newcommand{\Z}{\field{Z}}

\newcommand{\R}{\field{R}}
\newcommand{\C}{\field{C}}

\newcommand{\A}{\field{A}}
\renewcommand{\P}{\field{P}}

\newcommand{\GG}{\mathbb{G}}

\newcommand{\pE}{\pazocal{E}}
\newcommand{\pF}{\pazocal{F}}
\newcommand{\pG}{\pazocal{G}}

\newcommand{\Sn}[1]{\mathfrak{S}_{#1}}

\newcommand{\chow}[2]{#1/\!\!\!/#2}

\newcommand{\TS}{\mathsf{TS}}

\newcommand{\pS}{\pazocal{S}}

\newcommand{\sC}{\mathsf{C}}

\newcommand{\sF}{\mathsf{F}}
\newcommand{\sG}{\mathsf{G}}
\newcommand{\sH}{\mathsf{H}}
\newcommand{\sL}{\mathsf{L}}
\newcommand{\sP}{\mathsf{P}}
\newcommand{\sQ}{\mathsf{Q}}

\newcommand{\sT}{\mathsf{T}}
\newcommand{\sU}{\mathsf{U}}
\newcommand{\sV}{\mathsf{V}}
\newcommand{\sW}{\mathsf{W}}

\newcommand{\op}{\mathsf{op}}

\newcommand{\vertices}{\mathsf{vert}}
\newcommand{\edge}{\mathsf{edge}}

\newcommand{\Csch}{\C\operatorname{--sch}}

\newcommand{\se}{\mathsf{e}}
\newcommand{\Zone}{\langle \mathbf{1} \rangle}

\newcommand{\sa}{\mathsf{a}}
\newcommand{\su}{\mathsf{u}}
\newcommand{\sv}{\mathsf{v}}
\newcommand{\sw}{\mathsf{w}}

\newcommand{\col}{\mathsf{col}}

\newcommand{\Leaf}{\mathsf{Leaf}}

\newcommand{\spMat}{\sQ_\mathsf{sp}}
\newcommand{\spCone}{\pC_\mathsf{sp}}
\newcommand{\spw}{\sw_\mathsf{sp}}

\newcommand{\matroidal}{\mathsf{mat}}

\newcommand{\pQ}{\pazocal{Q}}

\DeclareMathOperator{\semigp}{semigp}

\DeclareMathOperator{\coef}{coef}

\newtheorem{theorem}{Theorem}[section]
\newtheorem{lemma}[theorem]{Lemma}
\newtheorem{proposition}[theorem]{Proposition}
\newtheorem{corollary}[theorem]{Corollary}

\newtheorem*{theorem*}{Theorem}

\theoremstyle{definition}

\newtheorem{example}[theorem]{Example}
\newtheorem{algm}[theorem]{Algorithm}

\theoremstyle{remark}
\newtheorem{remark}[theorem]{Remark}

\numberwithin{equation}{section}
\numberwithin{table}{section}
\numberwithin{figure}{section}

\newcommand\shorttitle[1]{\renewcommand\@shorttitle{#1}}

\title{The Grassmannian of $3$-planes in $\C^{8}$ is sch\"on}

\author{Daniel Corey}
\author{Dante Luber}

\begin{document}

\maketitle

\begin{abstract}
    We prove that the open subvariety $\Gr_0(3,8)$ of the Grassmannian $\Gr(3,8)$ determined by the nonvanishing of all Pl\"ucker coordinates is sch\"on, i.e., all of its initial degenerations are smooth. Furthermore, we find an initial degeneration that has two connected components, and show that the remaining initial degenerations, up to symmetry, are irreducible. As an application, we prove that the Chow quotient of $\Gr(3,8)$ by the diagonal torus of $\PGL(8)$ is the log canonical compactification of the moduli space of $8$ lines in $\P^2$, resolving a conjecture of Hacking, Keel, and Tevelev. Along the way we develop various techniques to study finite inverse limits of schemes.
    
    \noindent \textbf{MSC 2020}: 14T90 (primary), 05E14, 14C05, 52B40
 (secondary)
 
    \noindent \textbf{Keywords}: Chow quotient, Grassmannian, matroid, tight span
\end{abstract}

\section{Introduction}

A closed subvariety of an algebraic torus is \textit{sch\"on} if all of its initial degenerations---flat degenerations arising via Gr\"obner theory---are smooth. This notion was introduced by Tevelev in his influential paper \cite{Tevelev}, and admits this characterization by Helm and Katz \cite{HelmKatz}.  Sch\"on subvarieties of tori satisfy many desirable properties. Their tropical compactifications are sch\"on compactifications, in particular they are normal and have toroidal singularities. Notions from birational geometry, like log minimality and ampleness of the log canonical divisor, admit tropical characterizations for sch\"on subvarieties and their sch\"on compactifications, respectively \cite{HackingKeelTevelev2009}. Some notable examples include nondegenerate hypersurfaces \cite{VarchenkoNewton, VarchenkoZeta} (whose study predates the notion of sch\"onness), complements of hyperplane arrangements, open del Pezzo surfaces, and moduli spaces of marked del Pezzo surfaces \cite{HackingKeelTevelev2009}. 

Consider the Grassmannian $\Gr(r,n)$ of $r$-planes in $\K^{n}$, where $\K$ is an algebraically closed field, and let $\Gr_0(r,n)$ be the open locus of $\Gr(r,n)$ defined by the nonvanishing of all Pl\"ucker coordinates. Its tropicalization $\TGr_0(r,n)$ parameterizes all tropical linear spaces realizable over a valued field extension of $\K$ \cite[Theorem~4.3.17]{MaclaganSturmfels2015}, so $\TGr_0(r,n)$, and therefore the initial degenerations of $\Gr_0(r,n)$, depend on the characteristic of $\K$. For this reason, we assume that $\K=\C$ throughout. The Grassmannian $\Gr_0(r,n)$ is known to be sch\"on when $(r,n) = (2,n)$ \cite{Tevelev}, $(3,6)$ \cite{Luxton} and $(3,7)$ \cite{CoreyGrassmannians}.  

\begin{theorem}
\label{thm:schoenIntro}
The Grassmannian $\Gr_0(3,8)$ is sch\"on.  
\end{theorem}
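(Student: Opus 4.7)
The plan is to reduce the theorem to a finite case check indexed by cones of the tropical Grassmannian, and then verify smoothness of each initial degeneration by decomposing it as a finite inverse limit of smaller, well-understood pieces. First I would invoke the standard fact that $\init_w \Gr_0(3,8)$ depends only on the relative interior of the cone of $\TGr_0(3,8)$ containing $w$, so schönness is equivalent to smoothness of one representative $\init_w \Gr_0(3,8)$ for each cone of $\TGr_0(3,8)$. The natural symmetries, namely the coordinate action of $S_8$, the Grassmann duality $\Gr(3,8) \cong \Gr(5,8)$, and the lineality coming from the torus action, cut the cone poset down to a manageable list of orbit representatives. I would begin by enumerating these representatives, using the known classification of matroid subdivisions of the hypersimplex $\Delta(3,8)$ and the known cone structure of $\TGr_0(3,8)$.

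Second, for each representative cone $\sigma$, the associated matroid subdivision $\Sigma_\sigma$ of $\Delta(3,8)$ gives a combinatorial blueprint for $\init_w \Gr_0(3,8)$: the initial degeneration can be realized as the locus of tuples of Plücker vectors, one per maximal cell of $\Sigma_\sigma$, that agree on common faces. In other words, $\init_w \Gr_0(3,8)$ is a finite inverse limit whose vertices are thin Schubert cells of rank-$3$ matroids on at most $8$ elements (often with fewer elements or of smaller rank due to the structure of the cells), and whose transition maps record compatibility along shared faces. This is the natural setting for the finite-inverse-limit techniques advertised in the abstract: if each thin Schubert cell in the diagram is smooth, and the transition maps are transverse in an appropriate sense, then the limit is smooth as well. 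Matroid strata for small matroids appearing in the decomposition are either smooth by classical results, or reduce to the already-settled cases $\Gr_0(2,n)$, $\Gr_0(3,6)$, $\Gr_0(3,7)$, which allows many cones to be dispatched quickly.

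The main obstacle is the combinatorial explosion relative to the $(3,7)$ case and the appearance of cones where the naive inverse-limit description is subtle: there are genuinely many more matroid subdivisions of $\Delta(3,8)$, and some of them support matroids (such as those arising from tight-span combinatorics, as signaled by the keywords) whose thin Schubert cells are not individually well-studied. The case flagged in the abstract, where the initial degeneration has two connected components, is the most delicate: one must show that the scheme is smooth on each component and verify that the apparent reducibility is genuine rather than an artifact of a singular gluing in the inverse limit. I would therefore expect the bulk of the work to be the case-by-case analysis of this disconnected cone together with the handful of cones whose matroid subdivisions do not decompose the limit into pieces immediately covered by previously known schön Grassmannians; here one must write down explicit equations or use tailored inverse-limit smoothness criteria to complete the verification.
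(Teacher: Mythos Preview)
Your high-level strategy matches the paper's: reduce to cone representatives of $\TGr_0(3,8)$ modulo $\Sn{8}$ and the lineality space, and analyze each initial degeneration via the inverse limit $\Gr(\sw)$ of thin Schubert cells indexed by the matroidal subdivision $\pQ(\sw)$. However, there is a genuine gap. You assert that $\init_{\sw}\Gr_0(3,8)$ \emph{is} this inverse limit, but only a closed immersion $\init_{\sw}\Gr_0(3,8)\hookrightarrow\Gr(\sw)$ is available a priori (Theorem~\ref{thm:closedImmersion}). Smoothness of $\Gr(\sw)$ does not by itself imply smoothness of a closed subscheme; the paper must also verify $\dim\Gr(\sw)=15=r(n-r)$ so that the closed immersion is forced to be an isomorphism (Corollary~\ref{cor:limit2Init}). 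For the disconnected cone $\spCone$ even this is not enough, and one exhibits explicit $\C(t)$-valued points whose exploded tropicalizations land in each component.

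You also underestimate the inverse-limit step. The transition maps between thin Schubert cells are \emph{not} all smooth or dominant---Figure~\ref{fig:nondominantMap} gives an explicit $(3,8)$ counterexample---so a blanket transversality hypothesis fails. The paper instead develops tailored criteria (SDC-morphisms, $B$-maximality, removal of leaves, branches, and fins from the tight span) and partitions the $57\,344$ combinatorial types into six groups $\sG_1,\ldots,\sG_6$ handled by different combinations of these tools, with an ideal-reduction procedure (Algorithm~\ref{algm:reduceIdeal}) needed for several thousand types. Two smaller points: Grassmann duality $\Gr(3,8)\cong\Gr(5,8)$ identifies two different Grassmannians rather than cones within $\TGr_0(3,8)$, so it does not cut down the case list; and the cells of $\pQ(\sw)$ are matroid polytopes of $(3,8)$-matroids, so the relevant thin Schubert cells do not literally reduce to the previously settled $\Gr_0(3,6)$ or $\Gr_0(3,7)$.
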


\begin{remark}
The initial degenerations of Grassmannian $\Gr_0(3,8)$ are isomorphic, up to a torus factor, to those of the moduli space $X(3,8)$ parameterizing isomorphism classes of $8$ hyperplanes in $\P^{2}$ lying in general position. See Formula \ref{eq:initStratum}. In particular, $\Gr_0(3,8)$ is sch\"on if and only if $X(3,8)$ is sch\"on. In \cite{Schock}, Schock develops a theory of quasilinear varieties. Schock proves that a quasilinear variety $X$ is sch\"on and the strata of any sch\"on compactification of $X$ are irreducible. Schock also proposes a proof that $X(3,8)$ is quasilinear, and hence sch\"on, but this argument contains a critical gap. By Theorem \ref{thm:2ConnectedComponentsInDegIntro}, $X(3,8)$ has an initial degeneration that is disconnected, and hence $X(3,8)$ is not quasilinear.
\end{remark}

The tropical Grassmannian  $\TGr_0(r,n)$ lies in a larger space $\Dr(r,n)$ called the \textit{Dressian}, which parameterizes \textit{all} tropical linear spaces \cite{HerrmannJensenJoswigSturmfels, Speyer2008}. Initially studied by Lafforgue \cite{Lafforgue2003}, to a $\sw \in \Dr(r,n)$ is associated a finite inverse limit $\Gr(\sw)$ of thin Schubert cells parameterized by the induced matroidal subdivision of the hypersimplex. When $\sw\in \TGr_0(r,n)$, we may also form the $\sw$-initial degeneration $\init_{\sw}\Gr_0(r,n)$. We use in an essential way the main theorem of \cite{CoreyGrassmannians}, which asserts that, for any $\sw\in \TGr_0(r,n)$, there is a closed immersion
\begin{equation*}
    \init_{\sw}\Gr_0(r,n) \hookrightarrow \Gr(\sw). 
\end{equation*}
We deduce Theorem \ref{thm:schoenIntro} from a stronger result, that when $(r,n) = (3,8)$, the above mapa are isomorphisms, and the limits $\Gr(\sw)$ are smooth. Thus, while the statement of this theorem is geometric, the proof relies on techniques from matroid theory, commutative algebra (computing the coordinate rings of thin Schubert cells and morphisms between them) and polyhedral complexes (regular subdivisions of the hypersimplex and their tight-spans).  Given the connection between initial degenerations of $\Gr_0(r,n)$ and thin Schubert cells (the collection of which satisfies Murphy's Law by Mn\"ev universality), it is expected that $\Gr_0(r,n)$ is not, in general, sch\"on, although this remains an open problem.

\begin{figure}[h]
    \includegraphics[height=4cm]{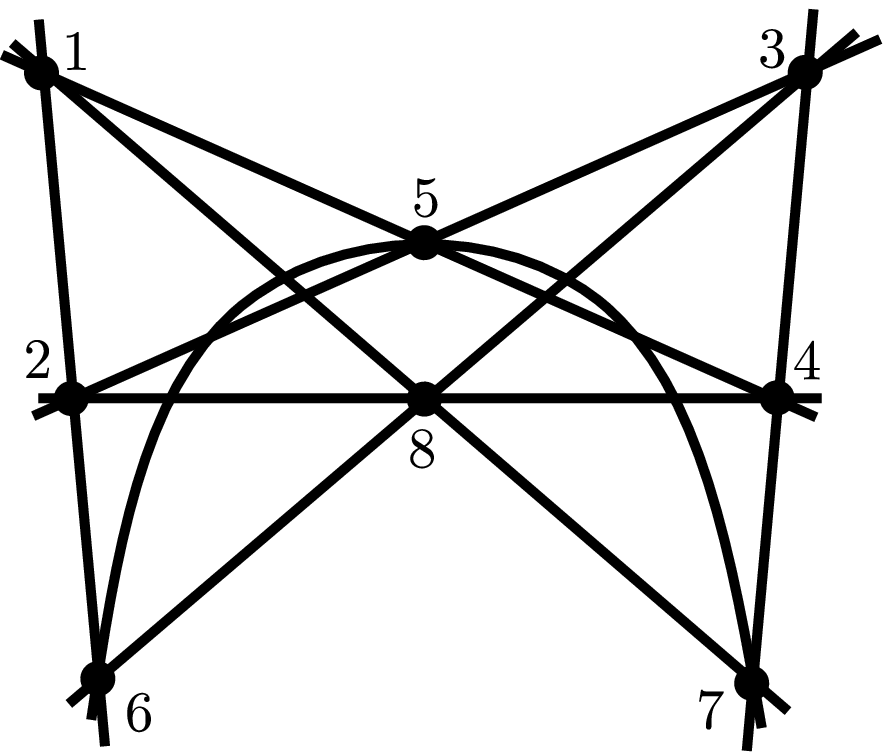}
    \caption{The matroid $\spMat$}
    \label{fig:spMatroidIntro}
\end{figure}

Next, we study the connectedness properties of the initial degenerations of $\Gr_{0}(3,8)$.  Interestingly, they are not all connected. Let $\spMat$ be the rank-3 matroid from Figure \ref{fig:spMatroidIntro}, i.e., each 3-element set not connected by a (possibly curved) line is a basis. This is the unique (up to the action of the symmetric group $\Sn{8}$) sparse-paving rank-3 matroid on $[8]$ with 8 nonbases (the maximum possible).  We show in \S \ref{sec:38mat} that $\spMat$ is not realizable over $\R$, but over $\Q(\sqrt{-3})$. Denote by $\spCone$ the cone of $\TGr_0(3,8)$ containing the corank vector of $\spMat$ in its relative interior.  

\begin{theorem}
\label{thm:2ConnectedComponentsInDegIntro}
For any $\sw$ in the relative interior of a cone in the $\Sn{8}$--orbit of $\spCone$, the initial degeneration $\init_{\sw} \Gr_{0}(3,8)$ has 2 connected components.  
\end{theorem}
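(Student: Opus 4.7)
By the $\Sn{8}$-equivariance of the initial degeneration operation it suffices to treat $\sw$ in the relative interior of $\spCone$ itself. The plan is to use the isomorphism $\init_{\sw}\Gr_{0}(3,8) \cong \Gr(\sw)$ that will be established as part of the stronger form of Theorem \ref{thm:schoenIntro}, and to analyze the inverse limit $\Gr(\sw)$ directly. A preliminary step is to describe the matroidal subdivision of the hypersimplex $\Delta(3,8)$ induced by the corank vector of $\spMat$: the polytope $P(\spMat)$ appears as a distinguished maximal cell, and the adjacent maximal cells correspond to the rank-$3$ matroids obtained by relaxing one of the $8$ nonbases of $\spMat$. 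These adjacent matroids are each realizable over $\Q$, so their thin Schubert cells are irreducible; I would spell this out case by case before turning to the global structure.

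The heart of the argument is that the thin Schubert cell $\Gr^{\spMat}$ itself has exactly two connected components. Fixing a projective frame to quotient out the $\PGL(3)$-action, the $8$ collinearity conditions imposed by $\spMat$ should reduce to a single scalar equation whose $\C$-solutions are the two primitive cube roots of unity. This is consistent with the realizability calculation of \S\ref{sec:38mat}: $\spMat$ admits realizations over $\Q(\sqrt{-3})$ but not over $\R$, and complex conjugation interchanges the two Galois-conjugate realizations. Thus $\Gr^{\spMat}$ is a disjoint union of two irreducible components, each a torus, swapped by complex conjugation.

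It remains to transfer this conclusion from $\Gr^{\spMat}$ to $\Gr(\sw)$. Using the presentation of $\Gr(\sw)$ as the inverse limit of thin Schubert cells $\Gr^{\MM(P)}$ indexed by the cells $P$ of the subdivision, with compatibility maps given by matroid minors, projection to the $P(\spMat)$-factor yields a morphism
\[
\pi\colon \Gr(\sw) \longrightarrow \Gr^{\spMat}.
\]
I would show that $\pi$ is surjective with irreducible fibers by walking outward along the dual graph of the subdivision: at each step from a cell $P$ to an adjacent cell $P'$, a point of $\Gr^{\MM(P)}$ extends along the restriction map $\Gr^{\MM(P')} \to \Gr^{\MM(P) \cap \MM(P')}$ in a torus-bundle fashion, so extensions form an irreducible variety. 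Granting this, $\Gr(\sw)$ has the same number of connected components as $\Gr^{\spMat}$, namely two.

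The main obstacle is precisely the final step: in principle, the inverse limit could reconnect the two complex-conjugate sheets of $\Gr^{\spMat}$ through a side-route in the fiber product, so I must genuinely control the restriction maps between thin Schubert cells of adjacent matroids. In practice this comes down to checking, for each adjacent matroid $\MM(P')$ to $\spMat$, that a lift from $\Gr^{\spMat}$ to $\Gr^{\MM(P')}$ sits inside a single component and depends on connected parameters; the techniques developed in the paper for computing coordinate rings of thin Schubert cells and maps between them should make this a finite, if delicate, combinatorial verification.
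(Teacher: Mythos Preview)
Your overall strategy---show that $\Gr(\sw)$ has two connected components via the projection to $\Gr(\spMat)$---matches the paper's Proposition~\ref{prop:specialLimit}, but there is a genuine gap at the point where you invoke ``the isomorphism $\init_{\sw}\Gr_{0}(3,8)\cong\Gr(\sw)$ that will be established as part of the stronger form of Theorem~\ref{thm:schoenIntro}.'' The only general tool producing that isomorphism is Corollary~\ref{cor:limit2Init}, and it requires $\Gr(\sw)$ to be smooth \emph{and irreducible} of dimension $r(n-r)$; here $\Gr(\sw)$ is disconnected, so the corollary does not apply, and appealing to Theorem~\ref{thm:smoothLimits}(3) for this particular $\sw$ is circular, since its content in this case \emph{is} Theorem~\ref{thm:2ConnectedComponentsInDegIntro}. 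A priori the closed immersion of Theorem~\ref{thm:closedImmersion} could land entirely inside one of the two $15$-dimensional components of $\Gr(\sw)$, and nothing in your outline excludes this. The paper closes the gap (Theorem~\ref{thm:2connectedComponents}) by exhibiting, for each root $a$ of $x^2-x+1$, an explicit $\C(t)$-valued $3\times 8$ matrix whose Pl\"ucker vector tropicalizes to $\spw$ and whose exploded tropicalization maps, under $\init_{\sw}\Gr_0(3,8)\to\Gr(\sw)\to\Gr(\spMat)\to X(\spMat)$, to the point of $X(\spMat)$ indexed by $a$; this witnesses that the image of the closed immersion meets both components and hence that the immersion is an isomorphism. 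The obstacle you flag in your final paragraph---whether the inverse limit might reconnect the two sheets---is a concern about $\Gr(\sw)$ alone, not about the passage from $\Gr(\sw)$ back to $\init_{\sw}\Gr_0(3,8)$, which is where the real difficulty lies.

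A smaller correction: the maximal cells adjacent to $\Delta(\spMat)$ in the corank subdivision are not relaxations of $\spMat$. They are the matroids $\sU(i,j,k,[8]\setminus ijk)$, one for each nonbasis $ijk$, whose simplification is the uniform $(3,4)$-matroid; see the description preceding Proposition~\ref{prop:specialLimit}.
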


\noindent Up to $\Sn{8}$--symmetry, this is the only non-connected initial degeneration of $\Gr_0(3,8)$.

\begin{theorem}
\label{thm:connectedInitDegIntro}
    If $\sw\in \TGr_0(3,8)$ is not in the relative interior of a cone in the $\Sn{8}$--orbit of $\pC_{\mathsf{sp}}$, then $\init_{\sw}\Gr_0(3,8)$ is irreducible. 
\end{theorem}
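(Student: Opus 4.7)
The plan is to combine the identification $\init_{\sw}\Gr_0(3,8) \cong \Gr(\sw)$ (established in the proof of Theorem~\ref{thm:schoenIntro}, which upgrades the closed immersion of \cite{CoreyGrassmannians} to an isomorphism) with an enumeration of $\Sn{8}$-orbits of cones of $\TGr_0(3,8)$. Since $\init_{\sw}\Gr_0(3,8)$ is smooth by Theorem~\ref{thm:schoenIntro}, irreducibility is equivalent to connectedness. It then suffices to show, for each orbit representative cone $\pC \neq \spCone$ and each $\sw \in \relint(\pC)$, that the finite inverse limit
\[
\Gr(\sw) \;=\; \varprojlim_{M \in \Sigma_{\sw}} \Gr(M),
\]
where $\Sigma_{\sw}$ is the matroidal subdivision of $\Delta(3,8)$ induced by $\sw$ and $\Gr(M)$ is the thin Schubert cell of $M$, is connected.

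The first concrete step is to obtain the list of $\Sn{8}$-orbits of cones of $\TGr_0(3,8)$, together with representative matroidal subdivisions; this is computable from the Dressian $\Dr(3,8)$ and existing databases of rank-$3$ matroids on at most $8$ elements, yielding a finite case list. For each orbit representative $\pC \neq \spCone$, I would enumerate the matroids $M$ appearing in $\Sigma_{\sw}$ and verify that each thin Schubert cell $\Gr(M)$ is irreducible. The hypothesis $\pC \notin \Sn{8}\cdot\spCone$, combined with the characterization of $\spMat$ as the essentially unique $8$-element rank-$3$ matroid whose realization space has two components, ensures that no ``bad'' matroid occurs as a cell of $\Sigma_{\sw}$; in every other case the realization space is geometrically irreducible, either by explicit coordinatization or by its description as a hyperplane complement in a torus.

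The final step is the main difficulty: to show that the inverse limit of the irreducible $\Gr(M)$, glued along the face maps of $\Sigma_{\sw}$, is itself irreducible. My strategy is to use the tight span $T_{\sw}$ of $\Sigma_{\sw}$: pick a maximal matroid $M_0$ of $\Sigma_{\sw}$, observe that the restriction $\Gr(\sw) \to \Gr(M_0)$ is surjective with irreducible fibers (parametrizing consistent choices of charts across $T_{\sw}$), and then propagate irreducibility across adjacent cells of the tight span by showing that each step is a fiber product of irreducible schemes along a dominant, smoothly-factoring face map. Connectedness of $T_{\sw}$ then delivers irreducibility of the limit. The principal obstacle is that inverse limits of irreducible schemes along surjective transition maps need not be irreducible in general; to surmount this, I would invoke the commutative-algebra and finite-inverse-limit techniques advertised in the abstract, computing coordinate rings of the relevant thin Schubert cells explicitly and checking the fiber-product irreducibility case by case over the (small) list of orbit representatives. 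The $\spMat$ case serves as a sanity check: its two $\PGL_3$-inequivalent realizations over $\Q(\sqrt{-3})$ are exactly what breaks the inductive step in Theorem~\ref{thm:2ConnectedComponentsInDegIntro}, and the hypothesis of Theorem~\ref{thm:connectedInitDegIntro} is designed to rule out precisely this phenomenon.
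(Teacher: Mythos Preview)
Your overall architecture matches the paper: reduce to showing that the inverse limit $\Gr(\sw)$ is smooth and irreducible of dimension $15$ for every $\Sn{8}$-orbit of cones other than $\spCone$, then invoke the closed immersion of Theorem~\ref{thm:closedImmersion} and Corollary~\ref{cor:limit2Init}. That part is fine.

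There are, however, two genuine gaps in the execution. First, the claim that the face maps are ``dominant, smoothly-factoring'' so that irreducibility can be propagated cell-by-cell across the tight span is false in general for $(3,8)$: the paper exhibits an explicit pair $\sP\leq\sQ$ of $(3,8)$-matroids for which $\varphi_{\sQ,\sP}$ is neither smooth nor dominant (the cross-ratio obstruction in Figure~\ref{fig:nondominantMap}). Consequently your inductive step ``each step is a fiber product of irreducible schemes along a dominant, smoothly-factoring face map'' cannot be carried out uniformly, and the single-projection claim that $\Gr(\sw)\to\Gr(M_0)$ is surjective with irreducible fibers is unjustified (and not used in the paper). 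The paper works around this by isolating structural situations where the relevant maps \emph{are} SDC-morphisms (Propositions~\ref{prop:oneInternalFacet}, \ref{prop:conditionIdealTrivial}, \ref{prop:finSDC}) and by pruning the tight span via leaves, branches, and fins (\S\S\ref{sec:removeLeaf}--\ref{sec:removeFins}) until what remains is either vertex-intersecting (so its coordinate ring can be presented explicitly via Proposition~\ref{prop:coordRingSigma} and simplified by Lemma~\ref{lem:upperTriangularEliminate} or Algorithm~\ref{algm:reduceIdeal}) or a tree with at most one non-$B$-maximal vertex.

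Second, the ``small list'' of orbit representatives is $57\,344$ combinatorial types, and the paper organizes them into six classes $\sG_1,\ldots,\sG_6$ according to which pruning strategy succeeds, with a computer verification for each class. Your proposal does not anticipate this scale, nor the need for the $B$-maximality criterion and the fin-removal machinery that make the case analysis tractable. Without these ingredients the ``check fiber-product irreducibility case by case'' step is not a proof but a restatement of the problem.
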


We apply these results to study the Chow quotient compactification of $X(r,n)$, the moduli space of projective equivalence classes of $n$ marked hyperplanes of $\P^{r-1}$ in linear general position. The diagonal torus of $\GL(n)$ acts on $\C^n$ by scaling coordinates, and this induces an action of the diagonal torus $H\subset \PGL(n)$ on $\Gr(r,n)$. This action is free on $\Gr_0(r,n)$, and $X(r,n)$ is identified with the quotient $\Gr_0(r,n)/H$ by the Gelfand-MacPherson correspondence \cite{GelfandMacPherson}. The space $X(2,n)$ is also known as $M_{0,n}$, the moduli space of smooth rational $n$-marked curves.

A natural candidate to compactify $X(r,n)$ is given by the Chow quotient $\chow{\Gr(r,n)}{H}$, studied by Kapranov \cite{Kapranov1993}, where he demonstrates that the Grothendieck-Knudsen moduli space of genus 0, stable, $n$-marked curves $\overline{M}_{0,n}$ is isomorphic to $\chow{\Gr(2,n)}{H}$. This is also the log-canonical compactification of $M_{0,n}$ \cite{KeelMcKernan}. Keel and Tevelev show that $\chow{\Gr(r,n)}{H}$ is usually not log canonical, failing already for $(r,n) = (3,9)$ \cite{KeelTevelev2006}. With Hacking, they conjecture [\textit{loc. cit.}, Conjecture 1.6] that it is log canonical when $(r,n) = (2,n)$, $(3,6)$, $(3,7)$, and $(3,8)$. The conjecture is known in the first 4 cases, by Keel and McKernan (mentioned above), Luxton \cite{Luxton}, and the first author \cite{CoreyGrassmannians}, respectively. Using Theorems \ref{thm:schoenIntro}, \ref{thm:2ConnectedComponentsInDegIntro} and \ref{thm:connectedInitDegIntro}, and a result of Schock \cite[Proposition~7.9]{Schock}, we prove the $(3,8)$ case.

\begin{theorem}
\label{thm:chowQuotientIntro}
The Chow quotient $\chow{\Gr(3,8)}{H}$ is the log canonical compactification of $X(3,8)$.
\end{theorem}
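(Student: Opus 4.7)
The plan is to apply Schock's Proposition~7.9 from \cite{Schock}, which gives a sufficient criterion for a Chow quotient of a sch\"on subvariety of a torus by a diagonal subtorus to be the log canonical compactification of its quotient. Its hypotheses are sch\"onness of the underlying variety and geometric connectedness of the boundary strata of the Chow quotient. Theorem \ref{thm:schoenIntro} supplies sch\"onness (which descends from $\Gr_0(3,8)$ to $X(3,8) = \Gr_0(3,8)/H$ via Gelfand-MacPherson), while Theorems \ref{thm:2ConnectedComponentsInDegIntro} and \ref{thm:connectedInitDegIntro} are the tools for controlling the strata.

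First I would recall Kapranov's realization of $\chow{\Gr(3,8)}{H}$ as a tropical-type compactification of $X(3,8)$ inside a toric variety whose fan is supported on the image of $\TGr_0(3,8)$ modulo the lineality from the $H$-action. The boundary strata correspond to cones of this Chow fan, and each stratum is, up to a subtorus quotient, a connected component of the initial degeneration $\init_{\sw}\Gr_0(3,8)$ for $\sw$ in the relative interior of the cone. Thus, the connectedness hypothesis reduces to showing that the Chow fan is fine enough that each of its cones sees only one connected component of the relevant initial degeneration.

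For cones not in the $\Sn{8}$-orbit of $\spCone$, Theorem \ref{thm:connectedInitDegIntro} asserts that the initial degeneration is irreducible, hence connected, so the associated strata are automatically connected. For cones in the $\Sn{8}$-orbit of $\spCone$, Theorem \ref{thm:2ConnectedComponentsInDegIntro} shows the initial degeneration has two connected components; the essential point to establish is that the Chow fan strictly refines the Gr\"obner fan along $\spCone$ (and its symmetric translates), subdividing it so that each sub-cone records exactly one of the two components. This is plausible because the two components should limit to distinct cycles in the Chow variety, reflecting the two Galois-conjugate $\Q(\sqrt{-3})$-realizations of $\spMat$ described in the introduction.

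Once sch\"onness and connectedness of all strata are in hand, Schock's Proposition~7.9 identifies $\chow{\Gr(3,8)}{H}$ with the log canonical compactification of $X(3,8)$, proving the theorem. The main obstacle I anticipate is the separation claim at $\spCone$: precisely verifying that the two components of $\init_{\spw}\Gr_0(3,8)$ correspond to distinct cones of the Chow fan. This likely requires tracing the degeneration induced by a one-parameter subgroup with cocharacter in $\relint(\spCone)$ and computing the limiting Chow cycles to confirm that they are genuinely distinct, drawing on the explicit matroidal description of $\spMat$ and its two non-isomorphic realizations.
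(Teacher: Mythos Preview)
Your proposal misidentifies both the content of \cite[Proposition~7.9]{Schock} and the mechanism by which the disconnected stratum is handled. In the paper, Schock's result is invoked to establish that the fan $\pS_{\trop}(3,8)/L_{\R}$ is \emph{convexly disjoint}---a purely combinatorial property of the tropical fan---not as a direct criterion for log canonicity with ``connectedness of strata'' among its hypotheses. The actual argument runs through \cite[Theorem~9.1]{HackingKeelTevelev2009}: ampleness of $K_{\overline{X}}+B$ follows once every \emph{irreducible component} of every boundary stratum is log minimal. For a sch\"on very affine variety, log minimality is equivalent (via \cite[Theorem~3.1]{HackingKeelTevelev2009} and \cite[Lemma~5.2]{KatzPayne2011}) to its tropicalization not being invariant under translation by any nontrivial rational subspace, and convex disjointness of the fan is precisely what supplies this for the irreducible strata indexed by cones outside the $\Sn{8}$--orbit of $\spCone$.

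For cones in the orbit of $\spCone$, your proposed resolution---that the Chow fan strictly refines $\spCone$ into two sub-cones, one per component---is both unsubstantiated and unnecessary. The compactification is taken with respect to $\pS_{\trop}(3,8)/L_{\R}$, in which $\spCone$ is a single maximal cone; there is no refinement, and your anticipated verification that the two limiting Chow cycles lie in distinct cones cannot succeed. Instead, since $\spCone$ is maximal, the corresponding stratum $X_{\spCone}$ is zero-dimensional, and by Theorem~\ref{thm:2ConnectedComponentsInDegIntro} (together with Formula~\eqref{eq:initStratum}) it consists of two reduced points. A point is trivially log minimal, so each irreducible component of this stratum satisfies the required hypothesis with no further argument. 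The ``main obstacle'' you flag thus dissolves once the correct criterion---log minimality of irreducible components rather than connectedness of strata---is in place.
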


Here is an outline of the paper. In \S \ref{sec:inverseLimitsTSC}, we review the relationship between initial degenerations of $\Gr_0(r,n)$ and finite inverse limits of thin Schubert cells induced by a matroidal subdivision of the hypersimplex. We consider the general setup of diagrams of schemes in \S\ref{sec:toolsForLimits}, and develop various strategies to determine when their inverse limits are smooth and irreducible. While not all morphisms between thin Schubert cells of $\Gr(3,8)$ are smooth and dominant, we develop general criteria in \S \ref{sec:computationsTSC} to detect when they are smooth and dominant with connected fibers. The disconnected initial degeneration of Theorem \ref{thm:2ConnectedComponentsInDegIntro} is studied in \S \ref{sec:reducibleInitDeg}. Theorems \ref{thm:schoenIntro} and \ref{thm:connectedInitDegIntro} are proved in \S\ref{sec:proofThms1And3}. We use the techniques from \S\S \ref{sec:toolsForLimits}-\ref{sec:computationsTSC} to prove that each inverse limit $\Gr(\sw)$ smooth of dimension 15 and, excluding the case from Theorem \ref{thm:2ConnectedComponentsInDegIntro}, irreducible. For any individual $\sw$, once one has the regular subdivision, one may carry out this verification by hand, as we illustrate in several examples throughout that section. As there are $57\,344$ cases to consider, we must use software. Finally, we discuss the Chow quotient in \S\ref{sec:Chow} and prove Theorem \ref{thm:chowQuotientIntro}. 

\subsection*{Code} Throughout \S\ref{sec:proofThms1And3}, we use \texttt{polymake.jl}  \cite{polymake,polymakeJL} and OSCAR \cite{OSCAR-book,OSCAR}, both of which run using \texttt{julia} \cite{BezansonEdelmanKarpinskiShah}. The code can be found at the following github repository:

\begin{center}
    \url{https://github.com/dcorey2814/Gr38Schoen}
\end{center}

\subsection*{Acknowledgements} We thank Benjamin Schr\"oter for sharing the data of the tropicalization of $\Gr_0(3,8)$ with us. We also thank Michael Joswig, Lars Kastner, Benjamin Lorenz, Sam Payne, and Antony Della Vecchia for helpful conversations. DC is supported by the SFB-TRR project ``Symbolic Tools in Mathematics and their Application'' (project-ID 286237555), and DL is supported by "Facets of Complexity” (GRK 2434, project-ID 385256563).

\section{Inverse limits of thin Schubert cells} 
\label{sec:inverseLimitsTSC}

\subsection{Matroidal subdivisions}
\label{sec:matroidalSubdivisions}
We assume that the reader is familiar with matroids; a good general reference is \cite{Oxley}. While there are many ways of characterizing matriods, the definition via \textit{bases} is most relevant. A matroid of rank $r$ on $[n]$, called an $(r,n)$-\textit{matroid}, is a nonempty subset $\sQ$ of $\binom{[n]}{r}$ that satisfies the basis-exchange axiom.

Let $N = \Z^{n}/\Zone$ and $M = \Hom(N,\Z)$. We use the standard abbreviations $M_{A} = M \otimes_{\Z} A$ and  $N_{A} = N \otimes_{\Z} A$ where $A$ is a $\Z$-module.  Denote by $\epsilon_{1},\ldots,\epsilon_n \in N$  the images of the standard basis vectors under the projection $\Z^{n} \to N$, and $\epsilon_{\lambda} = \epsilon_{i_1} + \cdots + \epsilon_{i_r} $ whenever $\lambda = \{i_1,\ldots,i_r\} \in \binom{[n]}{r}$.  Denote by $\epsilon_i^{*}$ and $\epsilon_{\lambda}^{*}$ in $M$ the duals of $\epsilon_i$ and $\epsilon_{\lambda}$, respectively.

Given $\sQ \subset  \binom{[n]}{r}$,  set
\begin{equation*}
    \Delta(\sQ) = \Conv \{\epsilon_{\lambda}^{*} \, : \, \lambda \in \sQ\} \subset M_{\R}
\end{equation*}
When $\sQ$ is a matroid, $\Delta(\sQ)$ is the \textit{matroid polytope} of $\sQ$. An important special case is the \textit{uniform matroid} $\sQ = \binom{[n]}{r}$, and its polytope $\Delta(\sQ)$ is the $(r,n)$-\textit{hypersimplex} which we denote $\Delta(r,n)$. 

Let  $N(\sQ) = \Z^{\sQ}/\Zone$ and $M(\sQ) = \Hom(N(\sQ),\Z)$. Denote by $\{\se_{\lambda}\, : \, \lambda \in \sQ\}$ the images of the standard basis vectors under $\Z^{\sQ} \to N(\sQ)$. For $\sw \in N(\sQ)_{\R} := N(\sQ) \otimes_{\Z} \R $, denote by $\pQ(\sw)$ the \textit{regular subdivision} of $\Delta(\sQ)$ induced by $\sw$; this is the polyhedral complex obtained by lifting, for each $\lambda \in \sQ$, the vertex $\epsilon_{\lambda}^{*}$ of $\Delta(\sQ)$ to height $\sw_{\lambda}$ in $M_{\R} \times \R$, then projecting the lower faces of this lifted polytope back down to $M_{\R}$. See \cite[Chapter~2]{DeLoeraRambauSantos} for a precise treatment. 

If $\sQ$ is a matroid, then the subdivision $\pQ(\sw)$ is \textit{matroidal} if each cell in $\pQ(\sw)$ is a matroid polytope.  The \textit{Dressian} of $\sQ$ \cite{HerrmannJensenJoswigSturmfels} is the set
\begin{equation*}
    \Dr(\sQ) = \{\sw \in N(\sQ)_{\R} \, : \, \pQ(\sw) \; \text{ is matroidal} \}.
\end{equation*}
 This is the support of a polyhedral fan in $N(\sQ)$.

\subsection{Tropicalization}

Consider a pair of finite-rank lattices $M,N$ with a perfect pairing 
\begin{equation*}
    (u, v) \to \langle u, v \rangle
\end{equation*}
Let $T = \Spec(\C[M])$ be the torus whose character and cocharacter lattices are $M$ and $N$, respectively. Denote by $x^{\su} \in \C[M]$ the monomial corresponding to $\su\in M$.  Suppose $X\subset T$ is a closed subvariety defined by a prime ideal $I\subset \C[M]$. Given $\sw \in N_{\R}$, the \textit{$\sw$-initial form} of $f\in \C[M]$ is
\begin{equation*}
    \init_{\sw} f = \sum_{\substack{\langle \su, \sw \rangle \\ \text{is minimal}} } c_{\su} x^{\su} \hspace{20pt} \text{where} \hspace{20pt} f = \sum_{\su} c_{\su} x^{\su} 
\end{equation*}
The \textit{$\sw$-initial ideal} of $I$ is $\init_{\sw} I = \langle \init_{\sw} f \, : \, f\in I\rangle$. The \textit{tropicalization} of $X$ is
\begin{equation*}
    \Trop(X) = \{\sw \in N_{\R} \, : \, \init_{\sw} I  \neq \langle 1 \rangle \}.
\end{equation*}
This set is the support of a rational polyhedral fan in $N_{\R}$. Given $\sw \in \Trop(X)$, the \textit{$\sw$-initial degeneration} of $X$, denoted $\init_w X$, is the subscheme of $T$ defined by the ideal $\init_{\sw} I$.

\subsection{The Grassmannian}
\label{sec:Grassmannian}
As a set, the Grassmannian $\Gr(r,n)$ consists of all $r$-dimensional linear subspaces of $\C^{n}$. It may be realized as a projective variety by the Pl\"ucker embedding:
\begin{align*}
    \Gr(r,n) \hookrightarrow \P(\wedge^{r}\C^n ) \cong \P^{\binom{n}{r} - 1}  \hspace{20pt} F \mapsto \wedge^r F = [\zeta_{\lambda}(F) \, : \, \lambda \in \textstyle{\binom{[n]}{r}}]
\end{align*}
The homogeneous coordinates $\zeta_{\lambda}(F)$ are called the \textit{Pl\"ucker coordinates} of $F$. 
Concretely, if $F \subset \C^n$ is the row span of the full-rank $(r\times n)$-matrix $A$, then $\zeta_{\lambda}(F)$ is the determinant of the $(r\times r)$-submatrix of $A$ whose columns are indexed by $\lambda$.

Given a matroid $\sQ \subset \binom{[n]}{r}$, its \textit{thin Schubert cell} is the scheme-theoretic intersection 
\begin{equation*}
    \Gr(\sQ) = \Gr(r,n) \cap T(\sQ)
\end{equation*}
where $T(\sQ) = \GG_m^{\sQ} / \GG_m$ is the locally-closed coordinate stratum of $\P^{\binom{n}{r}-1}$ whose nonzero coordinates are exactly those belonging to $\sQ$. Of particular interest is the thin Schubert cell of the uniform matroid $\sQ = \binom{[n]}{r}$, which we denote by  $\Gr_0(r,n)$; this is the open locus of $\Gr(r,n)$ determined by the nonvanishing of all Pl\"ucker coordinates. Throughout, we use the abbreviations 
\begin{equation*}
\TGr(\sQ) = \Trop(\Gr(\sQ)) \hspace{20pt} \text{and} \hspace{20pt} \TGr_0(r,n) = \Trop(\Gr_0(r,n))    
\end{equation*}
The set $\TGr(\sQ) \subset N(\sQ)_{\R}$ is preserved under translation by the $(n-1)$-dimensional linear subspace $L_{\R}$, where $L\subset N(\sQ)$ is the subgroup  
\begin{equation}
\label{eq:lineality}
    L = \left\langle  \sum_{i\in \lambda  \in \sQ} \se_{\lambda}, \;\;  \sum_{i\notin \lambda \in \sQ} \se_{\lambda} \, : \, i\in [n] \right\rangle. 
\end{equation}
We have an inclusion $\TGr(\sQ) \subset \Dr(\sQ)$ by \cite[Proposition~2.2]{Speyer2008} in the uniform matroid case and \cite[Lemma~4.4.6]{MaclaganSturmfels2015} in general. Of particular importance is that \textit{$\pQ(\sw)$ is matroidal for any $\sw \in \TGr(\sQ)$.}

Denote by $\pG(r,n)$ the Gr\"obner fan structure of $\TGr_0(r,n)$ and $\pS_{\matroidal}(r,n)$ the secondary fan structure on $\Dr(r,n)$. By \cite[Theorem~5.4]{Tevelev},  $\pG(r,n)$ is a subfan of a refinement of $\pS_{\matroidal}(r,n)$.  For $(r,n) = (2,n)$,  $(3,6)$ \cite{SpeyerSturmfels2004a}, $(3,7)$ \cite{HerrmannJensenJoswigSturmfels}, or $(3,8)$ \cite{BendleBoehmRenSchroter}, there is a subfan $\pS_{\trop}(r,n)$ of $\pS_{\matroidal}(r,n)$ whose support is $\TGr_0(r,n)$.

The \textit{face order} on the set of $(r,n)$--matroids is defined by $\sP \leq \sQ$  whenever  $\Delta(\sP)$  is a face of $\Delta(\sQ)$.
For each pair $\sP \leq \sQ$, the coordinate projection $T(\sQ) \to T(\sP)$ induces a morphism $\varphi_{\sQ,\sP}: \Gr(\sQ) \to \Gr(\sP)$ \cite[Proposition~I.6]{Lafforgue2003}. Given $\sw \in \TGr(\sQ)$, the regular subdivision $\pQ(\sw)$ of $\Delta(\sQ)$ is matroidal in the sense of \S \ref{sec:matroidalSubdivisions}. The assignment
\begin{equation*}
  \sP \mapsto \Gr(\sP) \hspace{20pt} \sP' \leq  \sP \mapsto \varphi_{\sP,\sP'}: \Gr(\sP) \to \Gr(\sP')  
\end{equation*}
defines a diagram of type $\pQ(\sw)$ in the category of affine $\C$-schemes, and therefore we may form its inverse limit:
\begin{equation*}
    \Gr(\sw) := \varprojlim_{\pQ(\sw)} \Gr. 
\end{equation*}
The following theorem \cite[Theorem~1.1]{CoreyGrassmannians} is fundamental to the proofs of the main theorems in this paper. 
\begin{theorem}
\label{thm:closedImmersion}
For any $\sw\in \TGr_0(r,n)$, there is a closed immersion 
\begin{equation*}
    \init_{\sw} \Gr_0(r,n) \hookrightarrow \Gr(\sw).
\end{equation*}
\end{theorem}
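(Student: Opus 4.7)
My plan is to produce, for each matroid $\sP$ appearing in the subdivision $\pQ(\sw)$, a morphism $\varphi_\sP \colon \init_\sw \Gr_0(r,n) \to \Gr(\sP)$, assemble these into a morphism to the inverse limit $\Gr(\sw)$, and then check that the resulting morphism is a closed immersion. The ambient torus is $T = \GG_m^{\binom{[n]}{r}}/\GG_m$, inside which $\init_\sw \Gr_0(r,n)$ is cut out by the initial ideal $\init_\sw I$ of the Pl\"ucker ideal $I$ of $\Gr_0(r,n)$. For each $\sP \in \pQ(\sw)$, there is a natural inclusion of coordinate rings $\C[M(\sP)] \hookrightarrow \C[M]$ corresponding to the torus projection $T \to T(\sP)$. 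The composition $\C[M(\sP)] \to \C[M]/\init_\sw I$ defines a morphism $\init_\sw \Gr_0(r,n) \to T(\sP)$, and what needs to be verified is that this morphism factors through $\Gr(\sP) \hookrightarrow T(\sP)$.

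The factorization reduces to an ideal containment: for every $g \in \C[M(\sP)]$ in the defining ideal of $\Gr(\sP) \subset T(\sP)$, the image of $g$ in $\C[M]$ must lie in $\init_\sw I$. Using Lafforgue's presentation \cite{Lafforgue2003}, these generators come from Pl\"ucker-type relations (three-term relations and their specializations) supported on the bases of $\sP$. For a three-term relation indexed by $S,i,j,k,l$, the matroidality of $\pQ(\sw)$ (which holds because $\sw \in \TGr_0(r,n) \subseteq \Dr(r,n)$) forces $\sw$ to be affine-linear on the vertices of $\Delta(\sP)$, so the quantities $\sw_\lambda + \sw_\mu$ over pairs $\{\lambda,\mu\}$ of bases of $\sP$ appearing in the relation all coincide. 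The corresponding matroid Pl\"ucker relation of $\sP$ can then be recovered from the $\sw$-initial form of the ambient three-term relation, possibly after discarding terms whose indices are outside $\sP$; handling the resulting bookkeeping (and the generators arising from disconnected matroids $\sP$) gives the required containment.

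Compatibility of the $\varphi_\sP$ with the structure morphisms $\varphi_{\sP,\sP'}$ is automatic, since every map in sight is induced by a coordinate projection. By the universal property of the inverse limit we obtain a morphism $\Phi \colon \init_\sw \Gr_0(r,n) \to \Gr(\sw)$. To conclude $\Phi$ is a closed immersion, observe that the coordinate ring $\cO(\init_\sw \Gr_0(r,n)) = \C[M]/\init_\sw I$ is generated as a $\C$-algebra by the classes $\zeta_\lambda^{\pm 1}$ with $\lambda \in \binom{[n]}{r}$. Since $\pQ(\sw)$ covers $\Delta(r,n)$, every such $\lambda$ is a basis of some $\sP \in \pQ(\sw)$, so each $\zeta_\lambda$ lies in the image of $\cO(\Gr(\sP)) \to \cO(\init_\sw \Gr_0(r,n))$. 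Therefore $\Phi^\sharp$ is surjective, as required.

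The hard part will be the second step: reducing the full scheme-theoretic ideal containment to the three-term case and carefully handling the generators arising from disconnected matroids $\sP$ (where additional binomial and monomial-in-the-torus relations appear), as well as verifying that initial forms of the three-term Pl\"ucker relations do not contribute extraneous terms that obstruct the sought factorization through $\Gr(\sP)$. Once this matroid-theoretic step is in place, the assembly into the inverse limit and the surjectivity check are formal.
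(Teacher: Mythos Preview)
The paper does not actually prove this theorem: it is quoted as \cite[Theorem~1.1]{CoreyGrassmannians}, so there is no in-paper argument to compare against. Your outline is nonetheless the standard one and is essentially how the result is established in that reference: construct maps $\init_\sw\Gr_0(r,n)\to\Gr(\sP)$ by projecting onto the $\sP$-coordinates and checking the Pl\"ucker-type relations of $\Gr(\sP)$ lie in $\init_\sw I$, pass to the limit, and verify surjectivity on rings.

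Two remarks on the part you flag as ``hard''. First, your worry about having to ``discard terms whose indices are outside $\sP$'' is unnecessary once you use the right affine functional. Because $\Delta(\sP)$ is a face of the regular subdivision $\pQ(\sw)$, there is an affine function $\ell_\sP$ on $M_\R$ with $\sw_\lambda=\ell_\sP(\epsilon_\lambda^*)$ for $\lambda\in\sP$ and $\sw_\lambda>\ell_\sP(\epsilon_\lambda^*)$ for $\lambda\notin\sP$ (this holds for \emph{every} face, not just maximal cells, since faces of the lifted polytope come from supporting hyperplanes). Every quadratic Pl\"ucker relation $f$ has all of its monomials $\zeta_{\lambda}\zeta_{\mu}$ centered at the same point $\epsilon_\lambda^*+\epsilon_\mu^*$, so the $\sw$-weight of a monomial supported on $\sP$ is strictly smaller than that of any monomial with an index outside $\sP$. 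Hence $\init_\sw f$ is \emph{exactly} the specialization of $f$ obtained by killing the non-$\sP$ variables, whenever the latter is nonzero; there are no extraneous terms to discard. Second, you do not need a separate analysis for disconnected $\sP$: the ideal of $\Gr(\sP)$ in $T(\sP)$ is generated by the images of the quadratic Pl\"ucker relations under $\zeta_\lambda\mapsto 0$ for $\lambda\notin\sP$, so the same weight argument handles all generators uniformly.

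With these observations your steps (1)--(3) go through cleanly; the assembly into the inverse limit and the surjectivity of $\Phi^\sharp$ are exactly as you describe.
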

\noindent We thus obtain the following.
\begin{corollary}
\label{cor:limit2Init}
Suppose $\sw\in \TGr_0(r,n)$. If $\Gr(\sw)$ is smooth and irreducible of dimension $r(n-r)$, then $\init_{\sw}\Gr_0(r,n)$ is also smooth and irreducible. 
\end{corollary}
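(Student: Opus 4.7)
The plan is to upgrade the closed immersion furnished by Theorem \ref{thm:closedImmersion} to an isomorphism $\init_{\sw}\Gr_0(r,n) \iso \Gr(\sw)$, from which the conclusion is immediate. The first step is a dimension check. Gröbner degenerations are flat, so $\init_{\sw}\Gr_0(r,n)$ has the same dimension as $\Gr_0(r,n)$, namely $r(n-r)$ (since $\Gr_0(r,n)$ is open in $\Gr(r,n)$). By hypothesis $\Gr(\sw)$ is irreducible of the same dimension $r(n-r)$.

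The second step is a topological argument: a closed subscheme of an irreducible scheme whose dimension equals that of the ambient scheme must fill it. Concretely, if the underlying set of $\init_{\sw}\Gr_0(r,n)$ were a proper closed subset of $\Gr(\sw)$, it would lie in the union of the proper closed subvarieties, hence would have strictly smaller dimension, a contradiction. Thus the closed immersion induces an equality of underlying topological spaces, so the ideal sheaf $\mathcal{I}\subset \mathcal{O}_{\Gr(\sw)}$ defining $\init_{\sw}\Gr_0(r,n)$ is contained in the nilradical of $\mathcal{O}_{\Gr(\sw)}$. The third step uses the smoothness assumption on $\Gr(\sw)$: smoothness implies reducedness, so the nilradical of $\mathcal{O}_{\Gr(\sw)}$ is zero, whence $\mathcal{I}=0$ and the closed immersion is an isomorphism of schemes.

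With the isomorphism $\init_{\sw}\Gr_0(r,n) \cong \Gr(\sw)$ in hand, the smoothness and irreducibility of $\init_{\sw}\Gr_0(r,n)$ are transferred directly from the hypotheses on $\Gr(\sw)$. There is essentially no obstacle in this corollary; its content is really a book-keeping consequence of Theorem \ref{thm:closedImmersion} together with the flatness of Gröbner degenerations. The genuine work is carried out in the sections that verify, case by case for $(r,n)=(3,8)$, that each inverse limit $\Gr(\sw)$ is indeed smooth and (away from the sparse-paving stratum) irreducible of dimension $15=r(n-r)$, so that this corollary can be applied.
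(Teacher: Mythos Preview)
Your argument is correct and follows the same approach as the paper: both invoke Theorem~\ref{thm:closedImmersion}, use flatness of the Gr\"obner degeneration to match dimensions, and conclude that the closed immersion is an isomorphism. The only difference is cosmetic: the paper packages the step ``closed immersion of equal-dimensional affine schemes into a smooth irreducible target is an isomorphism'' as a citation to \cite[Proposition~A.8]{CoreyGrassmannians}, whereas you spell out the underlying topological-plus-reducedness argument directly.
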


\begin{proof}
Being a limit of a flat degeneration of $\Gr_0(r,n)$, the initial degeneration $\init_{\sw} \Gr_0(r,n)$ is an affine scheme of dimension $r(n-r)$. The inverse limit $\Gr(\sw)$ also an affine scheme, so if it is smooth, irreducible, and has the same dimension as $\init_{\sw} \Gr(r,n)$, then the closed immersion from Theorem \ref{thm:closedImmersion} is an isomorphism \cite[Proposition~A.8]{CoreyGrassmannians}. 
\end{proof}

 In the course of proving Theorems \ref{thm:schoenIntro}, \ref{thm:2ConnectedComponentsInDegIntro}, and \ref{thm:connectedInitDegIntro}, we show that the closed immersion in Theorem \ref{thm:closedImmersion} is an isomorphism for each $\sw\in \TGr_0(3,8)$. This provides a conceptual explanation of \cite[Theorem~4.8]{BendleBoehmRenSchroter}, which states that two vectors $\sw_1,\sw_2 \in \TGr_0(3,8)$ lie in the same cone of $\pS_{\trop}(3,8)$ if and only if $\init_{\sw_1}\Gr_0(3,8) = \init_{\sw_2}\Gr_0(3,8)$. In contrast, the cones in the Gr\"obner fan parameterize the various initial ideals of the homogeneous Pl\"ucker ideal.

\subsection{Limits over graphs and tight spans}
\label{sec:limitsGraphsTS}

Computing the inverse limit $\Gr(\sw)$ seems like an unmanegable task given the size of the full face poset of $\pQ(\sw)$. It turns out that the codimension 0 and 1 cells that meet the relative interior of $\Delta(\sQ)$ are sufficient to compute this limit. For added flexibility, we describe a collection of subposets of $\pQ(\sw)$ that carry enough information to determine the inverse limit $\Gr(\sw)$. 

Let $\pQ(\sw)$ be a regular subdivision of a lattice polytope $\Delta(\sQ) \subset M_{\R}$ induced by the lifting function $\sw \in N(\sQ)$. The \textit{dual graph} of $\pQ(\sw)$, denoted by $\Gamma(\sw)$, is the graph that has a vertex $v_{\sP}$ for each codimension-0 face $\sP \in \pQ(\sw)$, and the vertices $v_{\sP}$, $v_{\sP'}$ are connected by an edge if and only if $\sP$ and $\sP'$ meet along a common codimension-$1$ face.

The \textit{tight span} of $\pQ(\sw)$, denoted $\TS(\sw)$, is the polyhedral complex that has a cell of dimension $\dim \Delta(\sQ) - k$ for each $k$-dimensional cell of $\pQ(\sw)$ meeting the relative interior of $\Delta(\sQ)$, and with face identifications opposite those of $\pQ(\sw)$. See \cite{Herrmann} for a precise treatment. The dual graph is exactly the 1-skeleton of the tight-span. Given any polyhedral complex $\Sigma$ with $\Gamma(\sw) \subset \Sigma \subset \TS(\sw)$, we may view $\Sigma^{\op}$ as a subposets of $\pQ(\sw)$, and hence we may form the limits of thin Schubert cells over this diagram.  We denote such a limit by $\varprojlim_{\Sigma} \Gr$.

\begin{proposition}
\label{prop:limitTSGamma}
Given any polyhedral complex $\Sigma$ with $\Gamma(\sw) \subset \Sigma \subset \TS(\sw)$, the natural morphisms
\begin{equation*}
      \Gr(\sw) \xrightarrow{\Phi} \varprojlim_{\Sigma} \Gr \xrightarrow{\Psi}  \varprojlim_{\Gamma(\sw)} \Gr  
\end{equation*} 
are isomorphisms. 
\end{proposition}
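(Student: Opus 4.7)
The plan is to prove both $\Phi$ and $\Psi$ are isomorphisms by a single cofinality argument. The diagram $\sP \mapsto \Gr(\sP)$ is contravariant on $\pQ(\sw)$: a face inclusion $\sP' \leq \sP$ induces a morphism $\varphi_{\sP, \sP'}: \Gr(\sP) \to \Gr(\sP')$. For any subposet $\pazocal{I} \subseteq \pQ(\sw)$, an $R$-point of $\varprojlim_\pazocal{I} \Gr$ is a compatible tuple $(x_\sP)_{\sP \in \pazocal{I}}$ satisfying $\varphi_{\sP, \sP'}(x_\sP) = x_{\sP'}$ whenever $\sP' \leq \sP$ in $\pazocal{I}$. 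I will show that the restriction map $\varprojlim_{\pQ(\sw)} \Gr \to \varprojlim_\pazocal{I} \Gr$ is an isomorphism provided that, for every $\sP \in \pQ(\sw)$, the set $\pazocal{I}_{\geq \sP} := \{\sP' \in \pazocal{I}: \sP \leq \sP'\}$ is nonempty and any two of its elements are joined by a sequence in $\pazocal{I}_{\geq \sP}$ whose consecutive terms are comparable in $\pQ(\sw)$. Under this hypothesis, one extends a compatible tuple by setting $x_\sP := \varphi_{\sP', \sP}(x_{\sP'})$ for any $\sP' \in \pazocal{I}_{\geq \sP}$; the cocycle identity $\varphi_{\sP'', \sP} = \varphi_{\sP', \sP} \circ \varphi_{\sP'', \sP'}$ for $\sP \leq \sP' \leq \sP''$ gives both independence of choice (propagating along the connecting sequence) and compatibility of the resulting extension.

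To apply this to $\pazocal{I} = \Sigma^{\op}$ and $\pazocal{I} = \Gamma(\sw)^{\op}$, I check the hypothesis in two cases. First, suppose $\Delta(\sP)$ meets $\relint \Delta(\sQ)$, so that $\sP$ corresponds to a cell $\tau$ of $\TS(\sw)$. If $\dim \tau \leq 1$ then $\sP$ lies in $\Gamma(\sw)^{\op}$ itself; if $\dim \tau \geq 2$, then by tight-span duality the codim-$0$ and codim-$1$ cells of $\pQ(\sw)$ above $\sP$ correspond exactly to the vertices and edges of the polytope $\tau$, whose 1-skeleton is connected. Second, suppose $\Delta(\sP) \subseteq \partial \Delta(\sQ)$. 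Then the codim-$0$ cells of $\pQ(\sw)$ above $\sP$, with an edge for each codim-$1$ cell of $\pQ(\sw)$ above $\sP$ shared by two of them, form the dual graph of the induced polyhedral subdivision of the star of $\Delta(\sP)$ in $\Delta(\sQ)$. This dual graph is connected by the standard fact that the dual graph of a polyhedral subdivision of a connected polytopal region is connected. Moreover, any codim-$1$ cell shared by two distinct codim-$0$ cells has its relative interior in $\relint \Delta(\sQ)$, hence is internal and lies in $\Gamma(\sw)^{\op}$. In either case, the hypothesis holds for $\Gamma(\sw)^{\op}_{\geq \sP}$, and therefore a fortiori for $\Sigma^{\op}_{\geq \sP}$ since $\Gamma(\sw)^{\op} \subseteq \Sigma^{\op}$.

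The main obstacle is identifying the local combinatorial structure around a boundary cell $\sP$ with a polyhedral subdivision, so that the connectivity of its dual graph is available; the interior case reduces cleanly to the 1-skeleton of a tight-span polytope and is essentially immediate. Once both cases are handled, the cofinality principle yields the isomorphisms $\Phi$ and $\Psi$ simultaneously, and the proposition follows.
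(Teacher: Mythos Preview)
Your proof is correct and takes a genuinely different route from the paper's. The paper's argument is a two-line reduction: it cites a result of Cueto \cite[Proposition~C.12]{CoreyGrassmannians} asserting that the composite $\Psi\circ\Phi$ is an isomorphism, and then observes that $\Phi\circ\Theta$ (with $\Theta=(\Psi\circ\Phi)^{-1}$) is inverse to $\Psi$, from which $\Phi$ is an isomorphism as well. Your argument, by contrast, is self-contained: you formulate and verify a cofinality criterion on the index poset (for every $\sP\in\pQ(\sw)$, the set $\pazocal{I}_{\ge\sP}$ is nonempty and zigzag-connected), and then check it directly for $\pazocal{I}=\Gamma(\sw)^{\op}$ by analyzing the local combinatorics---the $1$-skeleton of the dual tight-span cell in the interior case, and connectivity of the dual graph of the star in the boundary case. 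This simultaneously yields that both $\Phi$ and $\Psi\circ\Phi$ are isomorphisms, hence $\Psi$ is too. In effect you are reproving Cueto's result rather than invoking it; the paper's approach is shorter but leans on the citation, while yours exposes the underlying mechanism and makes clear why only the codimension-$0$ and internal codimension-$1$ cells are needed.
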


\begin{proof}
The composition $\Psi \circ \Phi$ is an isomorphism by a result of Cueto \cite[Proposition~C.12]{CoreyGrassmannians}; denote by $\Theta:\varprojlim_{\Gamma(\sw)} \Gr\to\Gr(\sw)$ its inverse. One readily verifies that $\Phi \circ \Theta$ is the inverse to $\Psi$.
\end{proof}

\section{Tools to study finite inverse limits of schemes}
\label{sec:toolsForLimits}

We gather in this section a number of techniques that will allow us to study inverse limits of diagrams of schemes coming from contractible polyhedral complexes, e.g., tight-spans of matroidal subdivisions of the hypersimplex. More precisely, we want techniques to prove that a finite inverse limit is smooth and irreducible just by understanding the spaces and maps in the inverse limit system. With these techniques in hand, we avoid the necessity to directly compute the inverse limit.  

The key facts underlying most arguments in this section are contained in the following proposition. Recall that a \textit{SDC-morphism} of $\C$-schemes is a morphism of schemes  $\varphi:X\to Y$ that is smooth, dominant, and its nonempty fibers are connected.

\begin{proposition}
\label{prop:smoothIrreducibleFiberProduct}
Suppose we have morphisms of equidimensional $\C$-schemes $X\to W$ and $Y\to W$.
\begin{enumerate}
    \item If $Y$ is smooth and irreducible, and $X\to W$ is an SDC-morphism, then $X\times_W Y$, if nonempty, is smooth and irreducible. 
    \item If $Y$ is smooth with $k$-connected components, and $X\to W$ is smooth and surjective with connected fibers, then  $X\times_W Y$ is smooth with $k$ connected components. 
\end{enumerate}
In either case, the dimension of $X\times_W Y$ is
\begin{equation*}
    \dim X\times_W Y = \dim X + \dim Y - \dim W.
\end{equation*}
\end{proposition}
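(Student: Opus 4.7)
The plan is to study both parts through the projection $\pi : X \times_W Y \to Y$, which is the base change of $X \to W$ and hence smooth in both (1) and (2). Composing with the structure morphism $Y \to \Spec \C$ and using that $Y$ is smooth over $\C$, the total space $X \times_W Y$ is itself smooth over $\C$. This immediately settles the smoothness assertions in both parts, and reduces the remaining content to a connectedness analysis of $\pi$.

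For the irreducibility claim in (1), the fiber of $\pi$ over $y \in Y$ is canonically identified with the fiber of $X \to W$ over the image of $y$, and hence is connected whenever it is nonempty, by the SDC hypothesis. Since smooth morphisms are open, the image $V$ of $X \to W$ is an open subset of $W$; dominance then forces $V$ to meet every irreducible component of $W$, so $V$ is dense. Letting $f:Y\to W$ denote the given morphism, the image of $\pi$ is $f^{-1}(V)$, which is open in the irreducible $Y$ and nonempty by assumption, hence dense and connected. The core of the argument is then the topological fact that a smooth surjective morphism with connected fibers over a connected base has connected source: any clopen decomposition of the source restricts to a clopen decomposition of each fiber, and connectedness of the fibers together with openness of $\pi$ forces the decomposition to be the pullback of a clopen decomposition of the base, which must be trivial. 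Applying this to $\pi$ restricted over $f^{-1}(V)$ gives that $X \times_W Y$ is connected; combined with its smoothness over $\C$, this yields irreducibility.

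Part (2) follows by decomposing $Y = \bigsqcup_{i=1}^k Y_i$ into its connected components. Each $Y_i$ is smooth and connected, hence irreducible, and because $X \to W$ is surjective, each $X \times_W Y_i$ is nonempty; part (1) then gives that each is smooth and irreducible, so $X \times_W Y = \bigsqcup_{i=1}^k (X \times_W Y_i)$ has exactly $k$ connected components. The dimension formula in either case follows from the standard fact that a smooth morphism between equidimensional schemes has nonempty fibers of relative dimension $\dim X - \dim W$, a property preserved under base change. The main obstacle is the topological descent step in the proof of (1): transferring connectedness of the fibers of $\pi$ to connectedness of the source via the openness of smooth morphisms; everything else is bookkeeping around stability of smoothness under base change and composition.
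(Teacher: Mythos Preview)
Your proposal is correct and follows essentially the same approach as the paper: base-change $X\to W$ along $Y\to W$ to obtain $\pi:X\times_W Y\to Y$, deduce smoothness of the fiber product from stability of smoothness under base change and composition, and obtain the connectedness statements from the topological fact that an open surjection with connected fibers induces a bijection on connected components (the paper invokes \cite[\href{https://stacks.math.columbia.edu/tag/0378}{Tag 0378}]{stacks-project} for this). The only organizational difference is that the paper cites \cite[Proposition~A.2]{CoreyGrassmannians} for part (1) and argues (2) directly, whereas you supply the argument for (1) explicitly and then reduce (2) to (1) by decomposing $Y$ into its connected components; both routes rest on the same underlying lemma.
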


\begin{proof}
Part (1) is \cite[Proposition~A.2]{CoreyGrassmannians}, so consider (2). Smoothness, surjectivity, and connectivity of fibers are all preserved by base change, so  $X\times_W Y \to Y$ has these properties. The fiber product $X\times_W Y$ is a smooth $\C$-scheme as $X\times_W Y \to \Spec \C$ is the composition of two smooth morphisms. By  \cite[\href{https://stacks.math.columbia.edu/tag/0378}{Tag 0378}]{stacks-project}, the morphism $X\times_W Y \to Y$ induces a bijection between the connected components of these spaces, and hence  $X\times_W Y$ has $k$ connected components. The dimension formula is standard, see, e.g., \cite[\S III.9]{Hartshorne}
\end{proof}

Throughout this section, let $\Sigma$ be a finite connected polyhedral complex, and $\Gamma$ its 1-skeleton. We regard $\Sigma$ as a quiver induced by its face order. Thus, we have an arrow $\sC' \to \sC$ whenever $\sC'$ is a \textit{facet} of $\sC$ in $\Sigma$. A \textit{diagram} of type $\Sigma$ in $\Csch$, the category of $\C$-schemes, is simply a functor $Z:\Sigma \to \Csch$. As finite inverse limits exist in the category $\Csch$ (indeed, it is enough to have fiber products and a terminal object, see \cite[Proposition~5.21]{Awodey}), we may form the inverse limit $\varprojlim_{\Sigma} Z$. 

\begin{proposition}
\label{prop:limitTreeAllButOne}
Suppose $Z: \Gamma \to \Csch$ is a diagram over a tree $\Gamma$, and that  $Z(v)$ is smooth and irreducible for all $v\in V(\Gamma)$. Furthermore, suppose that $Z(v \to e)$ is an SDC-morphism for all $v\in V(\Gamma)$ and $e\in E(\Gamma)$ adjacent to $v$, except for possibly one vertex $v_0$. Then $\varprojlim_{\Gamma} Z$ is smooth and irreducible of dimension
\begin{equation}
\label{eq:dimTree}
    \dim \varprojlim_{\Gamma} Z =  -\!\sum_{e\in E(\Gamma)} \dim Z(e) \;\; + \sum_{v\in V(\Gamma)} \dim Z(v) 
\end{equation}

\end{proposition}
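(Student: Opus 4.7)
The approach is induction on $|E(\Gamma)|$, peeling off one leaf at a time. The base case $|E(\Gamma)| = 0$ is immediate: $\Gamma$ consists of a single vertex $v$, so $\varprojlim_{\Gamma} Z = Z(v)$, which is smooth and irreducible by hypothesis, and the dimension formula \eqref{eq:dimTree} reduces to $\dim Z(v)$.

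For the inductive step, assume $|E(\Gamma)| \geq 1$. A finite tree with at least one edge has at least two leaves, so I may choose a leaf $v \in V(\Gamma)$ with $v \neq v_0$. Let $e$ be the unique edge of $\Gamma$ at $v$, let $v'$ be the other endpoint of $e$, and let $\Gamma' = \Gamma \setminus \{v, e\}$, which is again a tree. The restriction $Z' = Z|_{\Gamma'}$ inherits the hypotheses of the proposition: all vertex schemes remain smooth and irreducible, and every morphism $Z(u \to e')$ with $u \neq v_0$ is still SDC, since deleting the leaf $v \neq v_0$ affects no edge incident to any other vertex. By the inductive hypothesis, $\varprojlim_{\Gamma'} Z'$ is smooth and irreducible, with dimension equal to $\sum_{u \in V(\Gamma')} \dim Z(u) - \sum_{e' \in E(\Gamma')} \dim Z(e')$.

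Because $v$ is a leaf, standard manipulations of limits in $\Csch$ decompose the total limit as
\begin{equation*}
    \varprojlim_{\Gamma} Z \;\cong\; Z(v) \times_{Z(e)} \varprojlim_{\Gamma'} Z',
\end{equation*}
where the map $\varprojlim_{\Gamma'} Z' \to Z(e)$ factors through the projection to $Z(v')$ and then the arrow $Z(v') \to Z(e)$. Since $v \neq v_0$, the morphism $Z(v) \to Z(e)$ is SDC. I then apply Proposition \ref{prop:smoothIrreducibleFiberProduct}(1) with $X = Z(v)$, $W = Z(e)$ and $Y = \varprojlim_{\Gamma'} Z'$, obtaining smoothness and irreducibility of the fiber product together with the dimension identity
\begin{equation*}
    \dim \varprojlim_{\Gamma} Z \;=\; \dim Z(v) + \dim \varprojlim_{\Gamma'} Z' - \dim Z(e);
\end{equation*}
adding $\dim Z(v) - \dim Z(e)$ to the inductive dimension formula for $\varprojlim_{\Gamma'} Z'$ recovers \eqref{eq:dimTree}.

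The main technical obstacle is the nonemptyness hypothesis of Proposition \ref{prop:smoothIrreducibleFiberProduct}(1): one must verify that the image of $\varprojlim_{\Gamma'} Z' \to Z(e)$ meets the (open dense) image of the SDC-morphism $Z(v) \to Z(e)$. I expect this to follow in our setting either because face-contraction maps between thin Schubert cells are in fact surjective, or by tracking nonemptyness of the limit inductively using the existence of a realizable matroid subdivision, but isolating a clean statement covering all cases is the delicate point.
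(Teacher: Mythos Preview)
Your argument is essentially the paper's: both proceed by induction on $|E(\Gamma)|$, invoking Proposition~\ref{prop:smoothIrreducibleFiberProduct}(1) at each step. The only difference is which edge is removed. The paper contracts an edge $e$ incident to the exceptional vertex $v_0$, replacing $v_0$ and its neighbor $v\neq v_0$ by the fiber product $Z(v)\times_{Z(e)} Z(v_0)$ (which becomes the new exceptional vertex on the contracted tree $\Gamma/e$, and the limit identification is supplied by \cite[Proposition~A.5]{CoreyGrassmannians}); you instead delete a leaf $v\neq v_0$ and write the full limit as $Z(v)\times_{Z(e)}\varprojlim_{\Gamma'}Z$. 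Both decompositions are standard and yield the same dimension count.

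On the nonemptyness concern you flag: the paper's proof does not verify it either. In both arguments one applies Proposition~\ref{prop:smoothIrreducibleFiberProduct}(1), which only asserts smoothness and irreducibility of the fiber product \emph{if nonempty}, and neither proof checks this hypothesis at the inductive step. So the proposition, read as a standalone statement, implicitly assumes the limit is nonempty; in the paper's applications to thin Schubert cells this is never an issue, but your caution is well placed and your proof is no less complete than the paper's on this point.
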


\begin{proof}
We proceed by induction on the number of edges. The proposition is trivial if $\Gamma$ has no edges, so assume that $\Gamma$ has $n$ edges and that the proposition is true for graphs with fewer than $n$ edges. Suppose $v\neq v_0$ is connected to $v_0$ by the edge $e$. Then $Z(v)\times_{Z(e)} Z(v_0)$ is smooth and irreducible by Proposition \ref{prop:smoothIrreducibleFiberProduct}. Let $Z'$ be the diagram on $\Gamma/e$ where $Z'(v) = Z(v)\times_{Z(e)} Z(v_0)$ at the contracted vertex, and $Z' = Z$ otherwise. By \cite[Proposition~A.5]{CoreyGrassmannians} we have that $\varprojlim_{\Gamma} Z \cong \varprojlim_{\Gamma/e}Z'$. As $Z'$ is a diagram on a graph with fewer edges and satisfies the hypotheses of the proposition, the limit $\varprojlim_{\Gamma/e}Z'$ is smooth and irreducible by the inductive hypothesis. 
\end{proof}

\subsection{Removing leaves} \label{sec:removeLeaf} 
A \textit{leaf} of $\Sigma$ is a vertex $v$ of $\Sigma$ that is a leaf vertex of $\Gamma$. A \textit{leaf-pair} is a pair $(v,e)$ such that $v$ is a leaf and $e$ is its adjacent edge. Let $\Sigma_{\sL} \subset \Sigma$ be the subcomplex obtained by removing all leaf-pairs from $\Sigma$. 

\begin{proposition}
\label{prop:removeLeavesLim}
If $\varprojlim_{\Sigma_{\sL}} Z$ is smooth and irreducible and $Z(v\to e)$ is an SDC-morphism for each leaf pair $(v,e)$ then $\varprojlim_{\Sigma} Z$ is smooth and irreducible of dimension
\begin{equation}
    \label{eq:dimDeLeaf}
    \dim \varprojlim_{\Gamma} Z = \dim \varprojlim_{\Sigma_{\sL}} Z \;\;- \sum_{\substack{e \text{ leaf} \\
    \text{edge}}} \dim Z(e)\;\;+ \sum_{\substack{v \text{ leaf} \\
    \text{vertex}}} \dim Z(v)
\end{equation}
 \end{proposition}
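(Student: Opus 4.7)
The plan is to express $\varprojlim_\Sigma Z$ as a single fiber product involving $\varprojlim_{\Sigma_{\sL}} Z$ and then invoke Proposition \ref{prop:smoothIrreducibleFiberProduct}(1). The starting point is a structural observation about leaves in a polyhedral complex: since a leaf vertex $v$ is incident to only one edge $e$ of the 1-skeleton $\Gamma$, the edge $e$ cannot be a facet of any 2-cell of $\Sigma$, for any such 2-cell would contain $v$ and hence have at least two edges meeting at $v$. Iterating, $e$ is not a face of any cell of dimension $\geq 2$, so removing a leaf pair does not disturb the face relations among the remaining cells. Connectedness of $\Sigma$ together with the nonemptiness of $\Sigma_{\sL}$ (which is built into the irreducibility hypothesis on $\varprojlim_{\Sigma_{\sL}} Z$) further precludes any edge both of whose endpoints are leaves, since the triple $\{v, e, v'\}$ would then form a connected component disjoint from $\Sigma_{\sL}$.

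Enumerate the leaf pairs as $\{(v_i, e_i)\}_{i=1}^{k}$ and write $v_i' \in \Sigma_{\sL}$ for the other endpoint of $e_i$. By the structural observation, the value $x_{e_i}$ in any compatible tuple is pinned down by $x_{v_i'}$, so a point of $\varprojlim_\Sigma Z$ amounts to a point $(x_c)_{c \in \Sigma_{\sL}}$ of $\varprojlim_{\Sigma_{\sL}} Z$ together with points $x_{v_i} \in Z(v_i)$ satisfying $Z(v_i \to e_i)(x_{v_i}) = Z(v_i' \to e_i)(x_{v_i'})$ for each $i$. This yields the identification
\[
\varprojlim_{\Sigma} Z \;\cong\; \Big( \prod_{i=1}^{k} Z(v_i) \Big) \times_{\prod_{i=1}^{k} Z(e_i)} \varprojlim_{\Sigma_{\sL}} Z,
\]
where the left factor maps via $\prod_i Z(v_i \to e_i)$ and the right via the product of $Z(v_i' \to e_i) \circ \mathrm{pr}_{Z(v_i')}$.

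To conclude, observe that a finite product of SDC-morphisms is again an SDC-morphism: smoothness and dominance are preserved under products, and a product of connected spaces is connected. Hence $\prod_i Z(v_i) \to \prod_i Z(e_i)$ is SDC by hypothesis, and Proposition \ref{prop:smoothIrreducibleFiberProduct}(1) yields smoothness and irreducibility of the fiber product, with the stated dimension. Nonemptiness follows because the image of this SDC-morphism is open and dense in $\prod_i Z(e_i)$ and so meets the image of $\varprojlim_{\Sigma_{\sL}} Z$. The main step requiring care is the inverse-limit identification above, which hinges on the structural observation that leaf edges carry no hidden compatibility constraints coming from higher-dimensional cells of $\Sigma$.
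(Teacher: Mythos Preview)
Your approach is essentially the same as the paper's: both identify $\varprojlim_{\Sigma}Z$ with the fiber product $\bigl(\prod_i Z(v_i)\bigr)\times_{\prod_i Z(e_i)}\varprojlim_{\Sigma_{\sL}}Z$ and then appeal to Proposition~\ref{prop:smoothIrreducibleFiberProduct}. The paper packages this slightly differently, contracting $\Sigma_{\sL}$ to a single vertex to obtain a star graph $\Gamma'$, invoking an external reference for the isomorphism $\varprojlim_{\Sigma}Z\cong\varprojlim_{\Gamma'}Z'$, and then applying Proposition~\ref{prop:limitTreeAllButOne}; you instead argue the fiber-product description directly from the structural observation about leaf edges, which is cleaner and self-contained.

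Two small caveats. First, your parenthetical that nonemptiness of $\Sigma_{\sL}$ is ``built into the irreducibility hypothesis'' is not quite right: the limit over the empty diagram is the terminal object $\Spec\C$, which is irreducible. This edge case (where $\Sigma$ is a single edge) is harmless in practice and implicitly excluded in the paper's proof as well. Second, your nonemptiness argument for the fiber product (``open and dense \ldots\ so meets the image'') is not literally valid, since a dense open need not meet an arbitrary nonempty image; the paper's proof shares this gap, as Proposition~\ref{prop:smoothIrreducibleFiberProduct}(1) carries the same ``if nonempty'' caveat and Proposition~\ref{prop:limitTreeAllButOne} does not address it either.
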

 \begin{proof}
 Let $\Gamma'$ be the star-shaped graph obtained by contracting $\Sigma_{\sL}$ to a point. Let $Z'$ be the diagram on $\Gamma'$ defined by $Z$ on all leaf-pairs, and $\varprojlim_{\Sigma_{\sL}} Z$ on the newly-formed vertex. Set
\begin{equation*}
    Y_{\Leaf} = \prod_{\substack{v \text{ leaf} \\ \text{vertex}}} Z(v) \hspace{20pt}  Y_{\Leaf}' = \prod_{\substack{e \text{ leaf} \\ \text{edge}}} Z(e)
\end{equation*}
and note that $Y_{\Leaf} \to Y_{\Leaf}'$ is an SDC-morphism since each $Z(v\to e)$ is an SDC-morphism.  By \cite[Proposition~A.5]{CoreyGrassmannians} and Proposition \ref{prop:limitTSGamma}, we have
\begin{equation*}
    \varprojlim_{\Sigma} Z \cong \varprojlim_{\Gamma'} Z'
\end{equation*}
The proposition now follows from Proposition \ref{prop:limitTreeAllButOne}. 
 \end{proof}
 
Observe that $\Sigma_{\sL} \subset \Sigma$ may still have leaves. The process of iteratively removing leaf-pairs must terminate, and the result is the subcomplex $\Sigma_{\mathsf{Br}} \subset \Sigma$, which is nonempty provided  $\Sigma$ is not a tree. A maximal connected subgraph of $\Sigma \setminus \Sigma_{\mathsf{Br}}$ is called a \textit{branch} of $\Sigma$. Any vertex in a branch is called a \textit{branch vertex} and any edge contained in a branch, or connecting a branch to the rest of $\Sigma$, is called a \textit{branch edge}. The following proposition follows from Proposition \ref{prop:removeLeavesLim} and induction.

\begin{proposition}
\label{prop:removeBranchLim}
If $\varprojlim_{\Sigma_{\mathsf{Br}}} Z$ is smooth and irreducible and $Z(v\to e)$ is an SDC-morphism for each pair $(v,e)$ of a branch vertex $v$ and adjacent edge $e$, then $\varprojlim_{\Sigma} Z$ is smooth and irreducible of dimension
\begin{equation}
    \label{eq:dimDeBranch}
        \dim \varprojlim_{\Gamma} Z = \dim \varprojlim_{\Sigma_{\mathsf{Br}}} Z  - \sum_{\substack{e \text{ branch} \\
    \text{edge}}} \dim Z(e) + \sum_{\substack{v \text{ branch} \\
    \text{vertex}}} \dim Z(v).
\end{equation}
\end{proposition}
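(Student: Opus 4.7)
The plan is to prove the proposition by iterating Proposition \ref{prop:removeLeavesLim}. Define $\Sigma_0 := \Sigma$ and, inductively, $\Sigma_{i+1} := (\Sigma_i)_{\sL}$, the subcomplex obtained by peeling off all leaf-pairs of $\Sigma_i$. Since $\Sigma$ is finite, this stabilizes at some step $k$, and the terminal subcomplex has no leaves, so by the characterization in the paragraph preceding the proposition, $\Sigma_k = \Sigma_{\mathsf{Br}}$.

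I will argue by descending induction on $i \in \{0, 1, \ldots, k\}$ that $\varprojlim_{\Sigma_i} Z$ is smooth and irreducible. The base case $i = k$ is the hypothesis on $\varprojlim_{\Sigma_{\mathsf{Br}}} Z$. For the inductive step with $i < k$, consider any leaf-pair $(v, e)$ of $\Sigma_i$. Because $v \notin \Sigma_{i+1} \supseteq \Sigma_{\mathsf{Br}}$, the vertex $v$ is a branch vertex of $\Sigma$; its unique neighboring edge $e$ in $\Sigma_i$ is then a branch edge, as its other endpoint lies either in a branch (giving an edge inside a branch) or in $\Sigma_{\mathsf{Br}}$ (giving a connecting edge). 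The hypothesis of the proposition therefore supplies that $Z(v \to e)$ is an SDC-morphism, so Proposition \ref{prop:removeLeavesLim} applies to $\Sigma_i$ and yields that $\varprojlim_{\Sigma_i} Z$ is smooth and irreducible.

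For the dimension formula, Proposition \ref{prop:removeLeavesLim} at each step gives
\begin{equation*}
\dim \varprojlim_{\Sigma_i} Z = \dim \varprojlim_{\Sigma_{i+1}} Z - \sum_{e} \dim Z(e) + \sum_{v} \dim Z(v),
\end{equation*}
where $v$ and $e$ range over the leaf-pairs removed at step $i$. Telescoping from $i = 0$ to $i = k-1$ and noting that every branch vertex and every branch edge is pruned at exactly one step recovers \eqref{eq:dimDeBranch}.

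I do not anticipate a serious obstacle: Proposition \ref{prop:removeLeavesLim} carries all of the geometric content, and what remains is a finite induction together with the combinatorial check that the iterated peeling enumerates each branch vertex and each branch edge exactly once. The latter is immediate from the fact that once a cell is removed from $\Sigma_i$, it does not reappear in any $\Sigma_j$ with $j > i$, and that both endpoints of a branch edge, together with the edge itself, must eventually be pruned since none of them lies in $\Sigma_{\mathsf{Br}}$.
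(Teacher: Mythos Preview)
Your proof is correct and follows exactly the approach the paper indicates: iterate Proposition~\ref{prop:removeLeavesLim} and use induction. One small inaccuracy in your final paragraph: a branch edge connecting a branch to $\Sigma_{\mathsf{Br}}$ has one endpoint in $\Sigma_{\mathsf{Br}}$, so it is not true that ``both endpoints\ldots must eventually be pruned''; what matters (and what you correctly use in the main argument) is that the edge itself is not in $\Sigma_{\mathsf{Br}}$ and hence is pruned exactly once.
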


\subsection{Removing fins}\label{sec:removeFins}
 Suppose $\sF$ is a closed 2-dimensional cell of $\Sigma$ with $k$ vertices ($k\geq 3$). We say that $\sF$ is a \textit{fin} if its intersection with $\overline{\Sigma \setminus \sF}$ is a path of edge-length $\ell$ with  $1\leq \ell \leq k-2$; this path is called the \textit{connecting path} of $\sF$. See Figure \ref{fig:attachFin} for an illustration. Denote by $V(\sF)$ the vertices of $\sF$ and $E(\sF)$ its edges.
 A vertex $v\in V(\sF)$, resp. an edge $e\in E(\sF)$, is \textit{exposed} if $v\notin \overline{\Sigma\setminus \sF}$, resp. $e\not\subset \overline{\Sigma\setminus \sF}$. Denote by
\begin{equation*}
\pE_{\vertices}(\sF) = \{v \in V(\sF) \, : \, v \text{ is exposed}\} \hspace{20pt}   \pE_{\edge}(\sF) = \{e\in E(\sF)  \, : \, e \text{ is exposed}\}
\end{equation*}
Order the vertices of $\sF$ cyclically $v_1,\ldots,v_{k}$ so that the first $k-\ell-1$ are the exposed vertices. Let $e_{i,i+1}$ be the edge between $v_{i}$ and $v_{i+1}$ (where the indices are taken modulo $k$). Denote by $Z_{\pE}(\sF)$ the fiber product
\begin{equation}
\label{eq:limitExposed}
    Z_{\pE}(\sF) = Z(v_{1}) \times_{Z(e_{1,2})} Z(v_{2}) \times_{Z(e_{2,3})} \cdots \times_{Z(e_{k-\ell-2,k-\ell-1})} Z(v_{k-\ell-1})
\end{equation}
and let $\varphi_{\sF}:Z_{\pE}(\sF) \to Z(e_{1,k}) \times_{Z(F)} Z(e_{k-\ell-1,k-\ell})$ be the morphism induced by $Z(v_1 \to e_{1,k})$ and $Z(v_{k-\ell-1} \to e_{k-\ell-1,k-\ell})$.

\begin{figure}
    \centering
    \includegraphics[height=5cm]{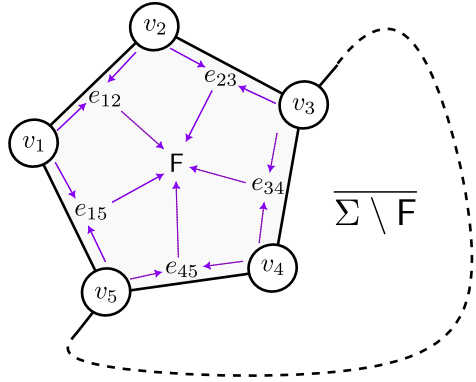}
    \caption{A fin and the corresponding quiver. The vertices $v_1$, $v_2$ and edges $e_{15}$, $e_{12}$, $e_{23}$ are exposed}
    \label{fig:attachFin}
\end{figure}

\begin{proposition}
\label{prop:addfin}
Suppose $Z:\Sigma \to \C\operatorname{-sch}$ is a diagram,  $\sF$ is a fin of $\Sigma$, and $\Sigma' = \overline{\Sigma\setminus \sF}$.  Then we have a pullback diagram
\begin{equation*}
\begin{tikzcd}
\varprojlim_{\Sigma} Z \arrow[r] \arrow[d] 
& Z_{\pE}(\sF)  \arrow[d, "\varphi_{\mathsf{F}}"] \\
\varprojlim_{\Sigma'} Z \arrow[r]
& Z(e_{1,k}) \times_{Z(F)} Z(e_{k-\ell-1,k-\ell})
\end{tikzcd}
\end{equation*}
\end{proposition}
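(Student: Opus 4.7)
The idea is to decompose $\Sigma$ into the cells already in $\Sigma'$ and those newly introduced by the fin $\sF$, and then compute the inverse limit in pieces. First I would let $\Sigma_{\sF}$ denote the closed subcomplex of $\Sigma$ consisting of $\sF$ together with all its vertices and edges. Then $\Sigma = \Sigma' \cup \Sigma_{\sF}$, and $\Sigma' \cap \Sigma_{\sF}$ is precisely the connecting path $\Pi$ with vertices $v_{k-\ell},\ldots,v_k$ and edges $e_{k-\ell,k-\ell+1},\ldots,e_{k-1,k}$. A direct application of the cover-gluing principle for inverse limits over diagrams of schemes (of the same type used in \cite[Proposition~A.5]{CoreyGrassmannians}, applied iteratively to the cells outside $\Pi$) yields a pullback square $\varprojlim_{\Sigma} Z \cong \varprojlim_{\Sigma'} Z \times_{\varprojlim_{\Pi} Z} \varprojlim_{\Sigma_{\sF}} Z$.

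The next step is to re-express $\varprojlim_{\Sigma_{\sF}} Z$ in terms of $Z_{\pE}(\sF)$. Since every cell of $\Sigma_{\sF}$ lies beneath $\sF$, a compatible family is equivalent to a cyclic tuple $(x_{v_1},\ldots,x_{v_k})$ with $x_{v_i}$ and $x_{v_{i+1}}$ mapping to a common element of $Z(e_{i,i+1})$ (indices mod $k$); the edge values and the value on $\sF$ are then forced by functoriality. Splitting this cyclic tuple into exposed vertices $v_1,\ldots,v_{k-\ell-1}$ and path vertices $v_{k-\ell},\ldots,v_k$, the only compatibility conditions that cross the partition concern the two "bridging" edges $e_{1,k}$ and $e_{k-\ell-1,k-\ell}$. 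This identifies $\varprojlim_{\Sigma_{\sF}} Z$ with the fiber product of $Z_{\pE}(\sF)$ and $\varprojlim_{\Pi} Z$ over $Z(e_{1,k}) \times Z(e_{k-\ell-1,k-\ell})$. A short chain argument through $Z(\sF)$---propagating equality of images along consecutive edges of $\sF$ via functoriality---shows both projections factor through the smaller object $Z(e_{1,k}) \times_{Z(\sF)} Z(e_{k-\ell-1,k-\ell})$, so the base may be replaced by this fiber product without altering the pullback.

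Combining the two isomorphisms via the pasting lemma for pullbacks then expresses $\varprojlim_{\Sigma} Z$ as the pullback of $\varphi_{\sF}\colon Z_{\pE}(\sF) \to Z(e_{1,k}) \times_{Z(\sF)} Z(e_{k-\ell-1,k-\ell})$ along the induced map from $\varprojlim_{\Sigma'} Z$, which is exactly the claimed square. The main obstacle is the middle step: verifying the presentation of $\varprojlim_{\Sigma_{\sF}} Z$ and the factoring through $Z(\sF)$. This is bookkeeping rather than substantive mathematics, but it uses crucially the defining property of a fin, namely that $\sF$ is attached to the rest of $\Sigma$ along a single connected path rather than a more complicated subcomplex; this is what makes the exposed/path splitting of the cyclic vertex tuple clean and leaves only two bridging edges.
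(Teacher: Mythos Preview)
Your proposal is correct and follows essentially the same approach as the paper: the paper simply states that the proof amounts to verifying the universal property of the fiber product, in the manner of \cite[Proposition~A.5]{CoreyGrassmannians}, and omits the details; your cover--decompose--paste argument is exactly such a verification, organized a bit more structurally. One small point worth making explicit in your write-up is that the ``chain argument through $Z(\sF)$'' relies on the commutativity of the squares $Z(v_i)\rightrightarrows Z(e_{i-1,i}),Z(e_{i,i+1})\to Z(\sF)$, i.e., that $Z$ is a functor on the face \emph{poset} of $\Sigma$ rather than merely on the quiver of facet inclusions; this is implicit in the paper's setup and is what guarantees both that the $\sF$--compatibility is automatic from the cyclic vertex data and that the maps from $Z_{\pE}(\sF)$ and from $\varprojlim_{\Pi}Z$ genuinely land in $Z(e_{1,k})\times_{Z(\sF)}Z(e_{k-\ell-1,k-\ell})$.
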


The proof amounts to showing that $\varprojlim_{\Sigma}Z$ satisfies the universal property of fiber products. It is similar to \cite[Proposition~A.5]{CoreyGrassmannians}, so we omit the proof.

Given a collection of fins $\mathfrak{F}$, let $\Sigma(\mathfrak{F}) \subset \Sigma$ be the subcomplex obtained by removing, for each $\sF\in \mathfrak{F}$, the relatively open cell $\sF^{\circ}$ and the exposed vertices and edges of $\sF$. 

\begin{proposition}
\label{prop:limitFins}
If $\varprojlim_{\Sigma(\mathfrak{F})} Z$ is smooth and irreducible and $\varphi_{\sF}$ is an SDC-morphism for each $\sF \in \mathfrak{F}$, then $\varprojlim_{\Sigma} Z$ is smooth and irreducible of dimension
\small{\begin{equation}
\label{eq:limitFin}
    \dim \varprojlim_{\Sigma} Z = \dim \varprojlim_{\Sigma(\mathfrak{F})} Z + \sum_{\sF \in \mathfrak{F}} \left( \dim Z(\sF) - \sum_{e\in \pE_{\edge}(\sF)} \dim Z(e) + \sum_{v\in \pE_{\vertices}(\sF)} \dim Z(v)   \right)
\end{equation}}
\end{proposition}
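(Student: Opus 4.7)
The plan is to proceed by induction on $m = |\mathfrak{F}|$. The base case $m = 0$ is trivial, since then $\Sigma(\mathfrak{F}) = \Sigma$ and the sum on the right-hand side of \eqref{eq:limitFin} is empty. For the inductive step, I would select any $\sF \in \mathfrak{F}$ and set $\mathfrak{F}' = \mathfrak{F} \setminus \{\sF\}$. Because the exposed cells of a fin are by definition not shared with the rest of the complex, removing $\sF$ does not disturb any other fin in $\mathfrak{F}$: every $\sF' \in \mathfrak{F}'$ remains a fin of $\Sigma' := \overline{\Sigma \setminus \sF}$ with the same connecting path, and $\Sigma'(\mathfrak{F}') = \Sigma(\mathfrak{F})$. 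Proposition~\ref{prop:addfin} then realizes $\varprojlim_\Sigma Z$ as the fiber product
\[
\varprojlim_\Sigma Z \;\cong\; \varprojlim_{\Sigma'} Z \times_{Z(e_{1,k}) \times_{Z(\sF)} Z(e_{k-\ell-1,k-\ell})} Z_{\pE}(\sF).
\]

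Invoking the inductive hypothesis on the pair $(\Sigma', \mathfrak{F}')$---the hypotheses that each $\varphi_{\sF'}$ for $\sF' \in \mathfrak{F}'$ is SDC and that $\varprojlim_{\Sigma(\mathfrak{F})} Z$ is smooth and irreducible carry over unchanged---shows that $\varprojlim_{\Sigma'} Z$ is smooth and irreducible of the dimension predicted by \eqref{eq:limitFin} applied to $\mathfrak{F}'$. Proposition~\ref{prop:smoothIrreducibleFiberProduct}(1), applied with $Y = \varprojlim_{\Sigma'} Z$ and the SDC morphism $\varphi_\sF$ (nonemptiness of the pullback is automatic since SDC morphisms are surjective), will then yield that $\varprojlim_\Sigma Z$ is smooth and irreducible.

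For the dimension, the formula from Proposition~\ref{prop:smoothIrreducibleFiberProduct} gives
\[
\dim \varprojlim_\Sigma Z = \dim \varprojlim_{\Sigma'} Z + \dim Z_{\pE}(\sF) + \dim Z(\sF) - \dim Z(e_{1,k}) - \dim Z(e_{k-\ell-1,k-\ell}).
\]
Iterating the same dimension formula along the chain of fiber products defining $Z_{\pE}(\sF)$ produces $\dim Z_{\pE}(\sF) = \sum_{i=1}^{k-\ell-1}\dim Z(v_i) - \sum_{i=1}^{k-\ell-2} \dim Z(e_{i,i+1})$; combining this with the inductive formula for $\dim \varprojlim_{\Sigma'} Z$ and identifying the exposed edges of $\sF$ as $\{e_{1,k}, e_{k-\ell-1,k-\ell}\} \cup \{e_{i,i+1} : 1 \le i \le k-\ell-2\}$ recovers \eqref{eq:limitFin}.

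The main obstacle will be the combinatorial bookkeeping: I must confirm that removing a single fin does not alter the fin structure of the remaining fins in $\mathfrak{F}$ (so the induction engages with the hypotheses as stated), and carefully match the sums over $\pE_\edge(\sF)$ and $\pE_\vertices(\sF)$ appearing in \eqref{eq:limitFin} with the terms produced by the fiber-product decomposition. Once these bookkeeping points are in place, the argument reduces to a clean chain of applications of Propositions~\ref{prop:addfin} and~\ref{prop:smoothIrreducibleFiberProduct}(1).
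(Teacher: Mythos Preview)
Your approach is essentially the paper's own: the one-line proof there says ``This follows from Propositions~\ref{prop:smoothIrreducibleFiberProduct} and~\ref{prop:addfin} applied to each fin $\sF \in \mathfrak{F}$,'' and your induction on $|\mathfrak{F}|$ simply unwinds that sentence.

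One slip to correct: you justify nonemptiness of the fiber product by asserting that ``SDC morphisms are surjective,'' but an SDC morphism is only required to be \emph{dominant} (smooth, dominant, connected fibers), not surjective; indeed Proposition~\ref{prop:smoothIrreducibleFiberProduct}(1) itself carries the caveat ``if nonempty.'' The paper's terse proof does not address this point either, so you are not missing an idea the paper supplies; in the applications (\S\ref{sec:proofThms1And3}) nonemptiness is guaranteed externally, since $\init_\sw \Gr_0(3,8)$ maps into $\Gr(\sw)$. If you want the abstract statement to stand on its own, either add a nonemptiness hypothesis or argue that the image of $\varphi_\sF$, being open and dense in $W$ (smooth morphisms are open), must meet the image of $\varprojlim_{\Sigma'} Z \to W$ under a mild assumption on the latter map.
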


\begin{proof}
This follows from Propositions \ref{prop:smoothIrreducibleFiberProduct} and \ref{prop:addfin} applied to each fin $\sF \in \mathfrak{F}$. 
\end{proof}

\section{Computations on thin Schubert cells}
\label{sec:computationsTSC}

By the Mn\"ev universality theorem (see \cite{Lafforgue2003,VakilLee}), the collection of schemes $\Gr(\sQ)$, as $\sQ$ runs through the $\C$-realizable $(3,n)$-matroids, satisfies Murphy's law in the sense of Vakil \cite{Vakil}. Nevertheless, for small values of $n$, many such $\Gr(\sQ)$ are smooth and irreducible; in fact $\Gr(\sQ)$ is smooth and irreducible for all $(3,n)$-matroid with $n\leq 7$ \cite[Proposition~4.2]{CoreyGrassmannians}. In this section, we show that $\Gr(\sQ)$ are smooth for all $\C$-realizable $(3,8)$--matroids, but there is one $\Gr(\sQ)$ that has 2 connected components.

\subsection{Affine coordinates for thin Schubert cells} 
\label{sec:affineCoordinates}
We recall the affine coordinate ring construction of $\Gr(\sQ)$, following \cite[Construction~2.2]{CoreyGrassmannians}.
Given any $r\times n$ (for $r\leq n$) matrix $A$, denote by $\col_{i}(A)$ the $i$-th column of $A$. With $\lambda = \{i_1<\cdots<i_s\}$ define the $r\times s$ submatrix $A_{\lambda} = [\col_{i_1} A, \ldots, \col_{i_s} A] $. 

Define $B$ to be the polynomial ring 
\begin{equation*}
    B = \C[ x_{ij} \, : \, i\in [r], \; j \in [n-r] ]
\end{equation*}
Given $\mu = \{j_1<\cdots<j_r\} \in \binom{[n]}{r}$, let $A^{(\mu)}$ be the $B$-valued matrix such that $(A^{(\mu)})_{\mu}$ is the $r\times r$ identity matrix, and $(A^{(\mu)})_{[n]\setminus \mu} $ is the $r\times(n-r)$ matrix $[x_{ij}]$. Of particular importance is the case $\mu = [r]$, where we write  $A := A^{(\mu)}$, which is
\begin{equation}
\label{eq:matrixA}
    A = \begin{bmatrix}
    1 & 0  & \cdots & 0 & x_{11} & x_{12} & \cdots & x_{1,n-r} \\
    0 & 1  & \cdots & 0 & x_{21} & x_{22} & \cdots & x_{2,n-r} \\
    \cdots & \cdots & \cdots & \cdots & \cdots & \cdots & \cdots & \cdots\\ 
    0 & 0 & \cdots & 1 & x_{r1} & x_{r2} & \cdots & x_{r,n-r} 
    \end{bmatrix}
\end{equation}

Let $\sQ$ be an $(r,n)$--matroid, $\mu$ a basis of $\sQ$, and $\sa_{\mu}:[n-r] \to [n]\setminus \mu$ the unique order-preserving bijection.  When $\mu = [r]$, this function is $\sa_{\mu}(j) = r + j$. Let
\begin{equation*}
    B_{\sQ}^{\mu} = \C[x_{ij} \, : \, \mu \Delta\{i, \sa_{\mu}(j)\} \in \sQ ]
\end{equation*}
where $\Delta$ denotes the symmetric difference, and define the quotient ring map
\begin{equation*}
    \pi_{\sQ}: B \to B / \langle x_{ij} \, : \, \mu \Delta\{i, \sa_{\mu}(j)\} \in \textstyle{\binom{[n]}{r}} \setminus  \sQ   \rangle  \cong B_{\sQ}^{\mu}.
\end{equation*}
Define the ideal $I_{\sQ}^{\mu}$ and multiplicative semigroup $S_{\sQ}^{\mu}$ by
\begin{equation}
\label{eq:IQSQ}
    I_{\sQ}^{\mu} = \langle \pi_{\sQ}(\det A^{(\mu)}_{\lambda}) \, : \, \lambda \in \textstyle{\binom{[n]}{r}} \setminus \sQ \rangle \hspace{20pt}  S_{\sQ}^{\mu} = \langle \pi_{\sQ}(\det A^{(\mu)}_{\lambda}) \, : \, \lambda \in \sQ \rangle_{\semigp}.  
\end{equation}
The affine coordinate ring of $\Gr(\sQ)$ is isomorphic to
\begin{equation*}
    R_{\sQ}^{\mu} := (S_{\sQ}^{\mu})^{-1} B_{\sQ}^{\mu} / I_{\sQ}^{\mu}.
\end{equation*}
To simplify notation, when $\mu = [r]$, we write $B_{\sQ} = B_{\sQ}^{\mu}$, $I_{\sQ} = I_{\sQ}^{\mu}$, $S_{\sQ} = S_{\sQ}^{\mu}$, and $R_{\sQ} = R_{\sQ}^{\mu}$. 

Given $(r,n)$-matroids $\sP,\sQ$ with  $\sP\leq \sQ$ in the face order (see \S\ref{sec:Grassmannian}) and $\mu$ a basis of $\sP$ (and therefore a basis of $\sQ$), the morphism $\varphi_{\sQ,\sP}: \Gr(\sQ) \to \Gr(\sP)$ is defined by the ring map \cite[Proposition~3.2]{CoreyGrassmannians}
\begin{equation}
\label{eq:TSCMapRings}
    (\varphi_{\sQ,\sP})^{\#}: R_{\sP}^{\mu} \to R_{\sQ}^{\mu} \hspace{20pt} x_{ij} \to x_{ij}.
\end{equation}

\begin{remark}\label{re:simplifycoord}
There is a further simplification one can make to compute the coordinate ring of $\Gr(\sQ)$. We describe this in the case when $\sQ$ is \textit{connected}.  Recall from the introduction that the diagonal torus $H\subset \PGL(n)$ acts on $\Gr(r,n)$. For any $\C$-realizable $(r,n)$-matroid, this restricts to an action $H\curvearrowright \Gr(\sQ)$. Since $\sQ$ is connected, this action is free. Let $X(\sQ) := \Gr(\sQ) / H$. As $\Gr(\sQ)$ is affine, we have $\Gr(\sQ) \cong X(\sQ) \times H$. 

Suppose $\lambda$ is a $(r+1)$-element subset of $[n]$ such that each $r$-element subset of $\lambda$ is a basis; such a $\lambda$ exists since $\sQ$ is connected. Applying a suitable permutation, we may assume that $\lambda = \{1,\ldots,r+1\}$. Let $A'$ be the matrix obtained from the matrix $A$ in Formula \eqref{eq:matrixA} by turning the $(r+1)$-st column into the column $[1, \cdots, 1]^T$. Consider the polynomial ring  
\begin{equation*}
    B' = \C[x_{12},\ldots,x_{r2}] \otimes_{\C} \cdots \otimes_{\C}  \C[x_{1,n-r},\ldots,x_{r,n-r}]
\end{equation*}
with the $\Z_{\geq 0}^{n-r-1}$-grading induced by the $n-r-1$ tensor-components. Define $\overline{I}_{\sQ}$ and $\overline{S}_{\sQ}$ in a way analogous to Formula \eqref{eq:IQSQ}, with $A'$ and $B'$ in place of $A$ and $B$, respectively. Then $X(\sQ)$ is the locally-closed subscheme of the multiprojective space $(\P^{r})^{n-r-1}$ cut out by the multihomogeneous ideal $\overline{I}_{\sQ}$ and localized at $\overline{S}_{\sQ}$. See \cite{BrennerSchroer} for scheme-theoretic treatments of multiprojective schemes. For each $2 \leq j \leq n-r$, there is an $i\in [r]$ such that $[r] \Delta \{i,r+j\}$ is a basis of $\sQ$, and so we may set $x_{ij} = 1$. Dehomogenizing in this way produces the affine coordinate ring of $X(\sQ)$. 

\end{remark}
\subsection{The (3,8) case}
\label{sec:38mat}
Let $\spMat$ be the matroid from Figure \ref{fig:spMatroidIntro}, i.e., it is the $(3,8)$-matroid whose bases are the elements of $\binom{[8]}{3}$ other than 
\begin{equation}
\label{eq:spMat}
    126, 145, 178, 235, 248, 347, 368, 567.
\end{equation}

\begin{proposition}
\label{prop:spMat2Components}
The thin Schubert cell $\Gr(\spMat)$ is a smooth affine scheme with two connected components of dimension $7$.  
\end{proposition}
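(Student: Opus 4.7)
The plan is to directly compute the affine coordinate ring of $\Gr(\spMat)$ using the construction from \S \ref{sec:affineCoordinates}, simplified by the observation in Remark \ref{re:simplifycoord}. Since $\spMat$ is a connected matroid, there is an isomorphism $\Gr(\spMat) \cong X(\spMat) \times H$ with $\dim H = 7$, so it suffices to show that $X(\spMat)$ is a reduced, $0$-dimensional scheme with exactly two $\C$-points; this implies that $\Gr(\spMat)$ is smooth with two connected components, each of dimension $7$.

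First I would take the basis $\mu = \{1,2,3\}$ of $\spMat$ and note that every $3$-subset of $\{1,2,3,4\}$ is a basis, so Remark \ref{re:simplifycoord} applies with the distinguished $4$-element set $\{1,2,3,4\}$, turning column $4$ of the generic matrix $A$ into $[1,1,1]^{T}$. The nonbases $\{2,3,5\}$ and $\{1,2,6\}$ force two entries of $A$ to vanish, and columns $5$, $6$, $7$, $8$ can each be dehomogenized by normalizing a non-vanishing entry to $1$. After a convenient such choice, the resulting matrix $A'$ depends on six indeterminates, say $x_{23}, x_{24}, x_{25}, x_{32}, x_{34}, x_{35}$.

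Next I would impose the six remaining nonbasis Pl\"ucker relations $\det A'_{\lambda} = 0$ for $\lambda \in \{145, 248, 347, 178, 368, 567\}$. A direct computation should yield three linear binomials forcing $x_{32} = x_{35} = x_{24} = 1$, two more that give $x_{23} = x_{25}$ and $x_{23} + x_{34} = 1$, and one final equation that, after back-substitution, reduces to $x_{25}^{2} - x_{25} + 1 = 0$. The left-hand side is the sixth cyclotomic polynomial $\Phi_{6}$, which is irreducible over $\R$ but splits into two distinct linear factors over $\C$. This is the algebraic witness of the classical fact that the M\"obius--Kantor configuration is realizable over $\Q(\sqrt{-3})$ but not over $\R$, and it is the source of the two connected components.

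To conclude, I would verify that the localization at $\overline{S}_{\spMat}$ does not alter the scheme, i.e., that no basis determinant $\det A'_{\lambda}$ vanishes at either root of $\Phi_{6}$; the reduced ring is then $\C[x_{25}]/(\Phi_{6}) \cong \C \times \C$, so $X(\spMat)$ is two reduced points and $\Gr(\spMat) \cong X(\spMat) \times H$ is smooth with two connected components of dimension $7$. The main, albeit mild, obstacle will be the bookkeeping: confirming nonvanishing for each of the $\binom{8}{3} - 8 = 48$ basis Pl\"ucker coordinates at the two $\C$-points of $X(\spMat)$. Since all of these determinants are explicit polynomials of small degree in the single variable $x_{25}$, this step is entirely routine.
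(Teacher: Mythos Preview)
Your proposal is correct and follows essentially the same approach as the paper: reduce to showing $X(\spMat)$ consists of two reduced points via the coordinate ring construction of Remark~\ref{re:simplifycoord}, obtaining the single equation $a^2 - a + 1 = 0$ (your $x_{25}$ is the paper's $a$, and your normalized matrix agrees with Formula~\eqref{eq:matrixMatroidsp}). The paper simply asserts the outcome of the computation you outline in detail; your extra step of explicitly checking that the $48$ basis minors are nonzero at the two roots is implicit in the paper's claim but worth making explicit.
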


\begin{proof}
As $X(\spMat) \cong \Gr(\spMat) \times H$ and $\dim H = 7$, it suffices to show that $X(\spMat)$ consists of two reduced points. Any $r\times n$ matrix representing $\spMat$ is in the $(\GL(r) \times H)$-orbit of 
\begin{equation}
\label{eq:matrixMatroidsp}
A^{\prime}=\begin{bmatrix}
1 & 0 &0 &1 & 0 & 1 &1 &1\\ 
0 & 1 &0 &1 & 1 & a &1 &a\\ 
0 & 0 &1 &1 & 1 & 0 & 1-a &1\\ 
\end{bmatrix}
\end{equation}
where $a$ is one of the 2 solutions of  $x^2-x+1=0$. This amounts to the fact that the affine coordinate ring of $X(\sQ)$ is isomorphic to 
\begin{equation*}
    \C[a] / \langle a^2 - a + 1 \rangle \cong \C \times \C. 
\end{equation*}
which may be verified using Remark \ref{re:simplifycoord}. 
\end{proof}

\begin{proposition}
\label{prop:smoothIrredTSC}
If $\sQ$ is a $\C$-realizable $(3,8)$-matroid other than $\spMat$, then $\Gr(\sQ)$ is smooth and irreducible. 
\end{proposition}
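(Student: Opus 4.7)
The plan is to combine a structural reduction to connected matroids with a finite computational verification. If $\sQ$ has a loop $i\in[8]$, then $\Gr(\sQ)\cong\Gr(\sQ\setminus i)$ is the thin Schubert cell of a $(3,7)$-matroid, since all Pl\"ucker coordinates involving $i$ are zero on any realization. If $i$ is a coloop of $\sQ$, a symmetric argument presents $\Gr(\sQ)$ as a torus bundle over $\Gr(\sQ/i)$, reducing to the $(2,7)$ case. More generally, any direct-sum decomposition $\sQ=\sQ_1\oplus\sQ_2$ yields $\Gr(\sQ)\cong\Gr(\sQ_1)\times\Gr(\sQ_2)$ up to a torus factor, so every disconnected $(3,8)$-matroid reduces to matroids on at most $7$ elements. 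Smoothness and irreducibility in these cases then follow from \cite[Proposition~4.2]{CoreyGrassmannians}.

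For the remaining connected $\C$-realizable $(3,8)$-matroids $\sQ\neq\spMat$, the approach is to apply Remark \ref{re:simplifycoord} to present the coordinate ring of $X(\sQ)=\Gr(\sQ)/H$ as the localization $(\overline{S}_\sQ)^{-1}(\C[x_{ij}]/\overline{I}_\sQ)$ in at most $2(n-r-1)=8$ variables. The connected realizable $(3,8)$-matroids form a finite, enumerable family. For each such $\sQ$, one performs two computer-algebra verifications: first, that $\overline{I}_\sQ$ is prime of the expected codimension after inverting $\overline{S}_\sQ$, yielding irreducibility; second, that the Jacobian of a generating set of $\overline{I}_\sQ$ has full rank on this open locus, yielding smoothness. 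The isomorphism $\Gr(\sQ)\cong X(\sQ)\times H$ then transfers both properties from $X(\sQ)$ to $\Gr(\sQ)$.

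The main obstacle is organizational rather than conceptual. Any single connected realizable $(3,8)$-matroid can be verified by hand---typically $\overline{I}_\sQ$ is generated by a short list of multilinear relations whose common zero locus is Zariski-open in an affine space---but rigorously processing every case requires scripted enumeration, in the same computational spirit as the $57\,344$-case argument of \S\ref{sec:proofThms1And3}. A separate subtlety is to confirm that $\spMat$ is the \emph{only} exception among connected realizable matroids: for $\spMat$, the reduced coordinate ring splits as $\C[a]/\langle a^2-a+1\rangle$ because its unique realization (Formula \eqref{eq:matrixMatroidsp}) requires a primitive sixth root of unity. For every other connected realizable matroid the analogous system admits a rational parameterization, yielding a single smooth irreducible component of the correct dimension.
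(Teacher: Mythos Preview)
Your proposal is correct, but the paper takes a markedly shorter route. Rather than computing coordinate rings and checking Jacobian ranks case by case, the paper reduces to \emph{simple} $(3,8)$-matroids (of which there are only $68$ up to $\Sn{8}$-symmetry) and then verifies a purely combinatorial criterion: for every simple $(3,8)$-matroid $\sQ\not\cong\spMat$, there exists an element $i\in[8]$ lying in at most two rank-$2$ cyclic flats. This criterion, together with the reduction to simplifications, is exactly the hypothesis of \cite[Propositions~4.1--4.2]{CoreyGrassmannians}, which already deliver smoothness and irreducibility of $\Gr(\sQ)$. No coordinate-ring or Jacobian computation is needed; the only check is the cyclic-flat count, which is trivial to script and easy to spot-check by hand.

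Your approach would certainly work and has the virtue of being self-contained, but it trades a one-line combinatorial check per matroid for a Gr\"obner-basis and Jacobian computation per matroid. The paper's method also explains \emph{why} $\spMat$ is exceptional: it is the unique simple $(3,8)$-matroid in which every point lies on three lines, so the combinatorial criterion fails uniformly and one is forced into the explicit computation of Proposition~\ref{prop:spMat2Components}.
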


\begin{proof}
The $(3,8)$-matroids have been enumerated by \cite{MatsumotoMoriyamaImaiBremner}. Up to $\Sn{8}$-symmetry, there are 325 such matroids, 68 of which are simple. Of the simple matroids, all but the ones isomorphic to $\spMat$ have the following property: there exists an $i \in [8]$ that is contained in 2 or fewer rank-2 cyclic flats. In fact, every element of $[8]$ is contained in exactly 3 rank-2 cyclic flats of $\spMat$. Thus the proposition follows from \cite[4.1-2]{CoreyGrassmannians}. 
\end{proof}

\subsection{Maps between thin Schubert cells} 
  Given a connected $(r,n)$-matroid $\sQ$ and $\sP \leq \sQ$ (i.e., $\Delta(\sP)$ is a face of $\Delta(\sQ)$), we say that $\sP$ is \textit{internal} if $\Delta(\sP)$ is contained in the interior of $\Delta(r,n)$.   If $\sP \leq \sQ$ are $(3,n)$-matroids for $n\leq 6$, or if $\sP\leq \sQ$ are $(3,7)$-matroids such that $\sQ$ is connected and $\sP$ is internal, then $\Gr(\sQ) \to \Gr(\sP)$ is an SDC-morphism \cite[Propositions~5.4,5.5]{CoreyGrassmannians}. In general, the morphisms $\varphi_{\sQ,\sP}: \Gr(\sQ) \to \Gr(\sP)$ are not expected to be smooth or dominant when $\sP \leq \sQ$, and we see this when  $n=8$.
  
  One such example is depicted in Figure \ref{fig:nondominantMap}. In the matroid on the left, the points labeled $1,2,3,4$ all lie on a $\P^{1}$, and the cross-ratio $(1,3;2,4)$ equals $-1$, see \cite[\S 5.2]{GelfandGoreskyMacPhersonSerganova}.  However, there is no such constraint for the matroid on the right. So the morphism $\varphi_{\sQ,\sP}$ is not dominant, nor is it smooth (as it is not open).  Nevertheless, enough of the morphisms between $(3,8)$-matroid strata are smooth and dominant with connected fibers to allow us to prove Theorem \ref{thm:schoenIntro}. We record some observations to simplify our analysis in the following 2 sections.

\begin{figure}[tbh]
    \centering
    \includegraphics[width=0.8\textwidth]{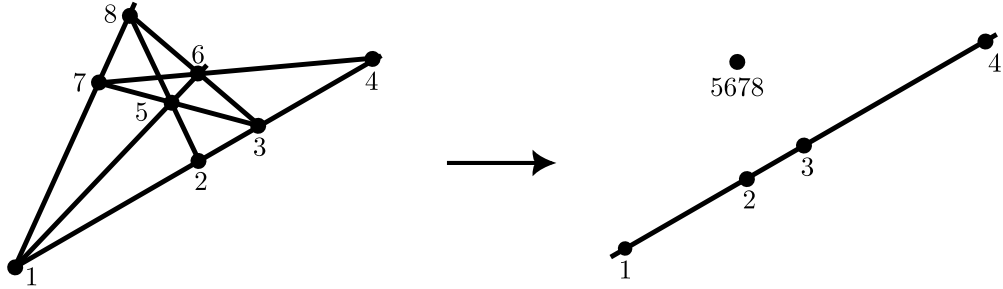}
    \caption{An example of a non-smooth, non-dominant morphism of thin Schubert cells}
    \label{fig:nondominantMap}
\end{figure}

Let $\sQ$ be a loopless $(r,n)$--matroid whose rank-1 flats are $\eta_1,\ldots,\eta_k$. Choose $a_1 \in \eta_1,\; a_2\in \eta_2, \ldots, a_k\in \eta_k$ and set $S = \{a_1,\ldots,a_k\}$. Then $\sQ|S$ is a simple matroid, and is called a \textit{simplification} of $\sQ$. Such a set $S$ is called a \textit{simplifying set} of $\sQ$. Given a vector $\sv \in N_{\R}$, denote by $\face_{\sv}\sQ$ the matroid
\begin{equation*}
    \face_{\sv}\sQ = \{ \xi \in \sQ \, : \, \langle \se_{\xi}^{*}, \sv \rangle \leq  \langle \se_{\xi'}^{*}, \sv \rangle \text{ for all } \xi'\in \sQ  \};
\end{equation*}
this is the matroid of the face of $\Delta(\sQ)\subset M_{\R}$ normal to $\sv \in N_{\R}$.
Now suppose $\sQ$ is connected. A subset  $\lambda \subset [n]$ is a \textit{nondegenerate subset} of $\sQ$ if $\sQ|\lambda$ and $\sQ/\lambda$ are connected.   Recall from \cite[\S~2.5]{GelfandSerganova} that $\face_{-\epsilon_{\lambda}}\sQ \cong \sQ|\lambda \times \sQ/\lambda$ and $\lambda \mapsto \Delta(\face_{-\epsilon_{\lambda}}\sQ)$ defines a bijection between the nondegenerate subsets of $\sQ$ and the facets of $\Delta(\sQ)$. 

Now suppose $\sQ$ has rank $3$. A subset $\lambda\subset [n]$ is a \textit{line} if $\lambda$ is a rank-2 flat and $\lambda\cap S$ is a cyclic flat of $\sQ|S$; equivalently, $\lambda$ is a rank-2 flat such that $|\lambda \cap S| \geq 3$. If $\sQ$ is simple, then a line is simply a rank-2 cyclic flat, but in general being a line is a slightly stronger condition than being a rank-2 cyclic flat. 

\begin{proposition}
\label{prop:oneInternalFacet}
Suppose $\sQ$ is a connected $(3,n)$--matroid with 2 or fewer lines.  Then $\Gr(\sQ)$ is smooth and irreducible, and $\varphi_{\sQ,\sP}: \Gr(\sQ) \to \Gr(\sP)$ is an SDC-morphism for any internal $\sP\leq \sQ$. 
\end{proposition}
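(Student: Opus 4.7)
The plan is to first reduce to the case of simple $\sQ$, and then analyze the coordinate rings directly using \S\ref{sec:affineCoordinates}. To reduce, fix a simplifying set $S$: for each rank-$1$ flat $\eta$ of size $k_{\eta}>1$, the $k_{\eta}-1$ columns of $A^{(\mu)}$ indexed by $\eta\setminus\{a_{\eta}\}$ are scalar multiples of the column for $a_{\eta}$, giving a trivial $\mathbb{G}_{m}^{k_{\eta}-1}$-bundle factor in $\Gr(\sQ)\to\Gr(\sQ|S)$. Combining these contributions yields $\Gr(\sQ)\cong\Gr(\sQ|S)\times\mathbb{G}_{m}^{N}$, so the forgetful map is an SDC-morphism. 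Since the lines of $\sQ$ are in bijection with the rank-$2$ cyclic flats of $\sQ|S$ (by the definition of a line), the simplification still has at most $2$ lines. The added $\mathbb{G}_{m}$-parameters decouple from the variables defining $\varphi_{\sQ,\sP}^{\#}$ in \eqref{eq:TSCMapRings}, so both conclusions of the proposition transfer from $\sQ|S$ to $\sQ$. Thus it suffices to handle $\sQ$ simple.

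In the simple case, the plan is to case-split on the number of lines $\ell \in \{0,1,2\}$. For $\ell=0$, every $3$-subset is a basis, so $\sQ=U_{3,n}$ and $\Gr(\sQ)=\Gr_{0}(3,n)$ is an open subscheme of the Grassmannian, hence smooth and irreducible. For $\ell=1$ with line $L$, I would choose a basis $\mu$ with $|\mu\cap L|=2$, so that the non-basis conditions on $3$-subsets of $L$ all collapse to the single linear equation $x_{3,c-r}=0$ per element $c\in L\setminus\mu$, leaving $R_{\sQ}^{\mu}$ as a polynomial ring localized at the basis-Pl\"ucker products. For $\ell=2$ with $L_{1}\cap L_{2}=\{p\}$, take $\mu=\{p,q_{1},q_{2}\}$ with $q_{i}\in L_{i}\setminus\{p\}$: each line independently contributes linear ``kill-a-variable'' conditions. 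The subtlest subcase is $\ell=2$ with $L_{1}\cap L_{2}=\emptyset$, where $\mu$ must have at least two points on some line: picking $\mu$ with two points on $L_{1}$ and one on $L_{2}$, the conditions from $L_{1}$ remain linear, while those from $L_{2}$ become the quadratic ``parallel-columns'' conditions on the $(x_{1,j},x_{2,j})$-parts of columns indexed by $L_{2}\setminus\mu$; the resolution is to reparameterize those columns as scalar multiples of a fixed reference column in the relevant $2$-plane. In all cases, $R_{\sQ}^{\mu}$ becomes the coordinate ring of an open subscheme of affine space, proving smoothness and irreducibility.

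For the SDC claim with $\sP\leq\sQ$ internal, I would use the explicit parameterizations above with a common basis $\mu$. The ring map $\varphi_{\sQ,\sP}^{\#}$ of \eqref{eq:TSCMapRings} sends variables to variables, and the variables in $R_{\sQ}^{\mu}$ not appearing in $R_{\sP}^{\mu}$ parameterize the deformation of $\Delta(\sP)$ inside $\Delta(\sQ)$. Since $\sP$ has neither loops nor coloops by internality, these extra variables do not degenerate; the Jacobian of $\varphi_{\sQ,\sP}^{\#}$ has full rank on $\Gr(\sQ)$, giving smoothness, dominance follows from a dimension count, and the fibers are irreducible as open subschemes of affine spaces. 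The hardest step will be the disjoint-lines subcase in the previous paragraph, where the explicit reparameterization of the quadratic conditions must be made compatible with the non-vanishing Pl\"ucker monomials and with the subsequent Jacobian and dimension verifications. Once this case is handled, the SDC analysis closely mirrors that of \cite[Propositions~5.4--5.5]{CoreyGrassmannians}.
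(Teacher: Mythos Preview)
Your reduction to the simple case has a genuine gap. You claim that once the proposition is known for the simplification $\sQ|S$, it transfers to $\sQ$ because the extra $\GG_m$-parameters decouple. The decoupling is fine, but you implicitly need the map $\Gr(\sQ|S)\to\Gr(\sP|S)$ to be SDC, and for that you invoke the simple case of the proposition---which you only prove for \emph{internal} $\sP|S$. The problem is that internality is not preserved by restriction to $S$. For instance, take $\sQ$ of rank $3$ on $\{1,2,3,4,5\}$ with $4,5$ parallel and otherwise generic, and $\sP=\face_{-\epsilon_{\{4,5\}}}\sQ$. Then $\sP$ is an internal facet of $\sQ$ (neither $4$ nor $5$ is a loop or coloop of $\sP$), but with $S=\{1,2,3,4\}$ the restriction $\sP|S$ has $4$ as a coloop, so $\Delta(\sP|S)$ sits in the boundary facet $\{x_4=1\}$ of $\Delta(3,4)$. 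Thus $\sP|S$ is not internal and your simple-case argument does not apply to it.

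The paper confronts exactly this issue: after noting that $\sP|S$ may fail to be internal, it proves the \emph{stronger} statement that for simple $\sQ$ with at most two lines, $\varphi_{\sQ,\sP}$ is SDC for \emph{every} facet $\sP\leq\sQ$, internal or not. By the classification of facets (\cite[Proposition~5.1]{CoreyGrassmannians}), these are $\sP=\face_{-\epsilon_\lambda}\sQ$ with $|\lambda|\in\{1,n-1\}$ or $\lambda$ a line. The line and $|\lambda|=n-1$ cases follow from prior work; the substantive new content is the $|\lambda|=1$ case, where one must analyze the position of the deleted point $a$ relative to the (at most two) lines. That case-by-case computation---five subcases depending on whether the lines meet and whether $a$ lies on zero, one, or both of them---is precisely what your sketch omits, since in the simple case you only treat internal $\sP$ (which, for simple $\sQ$, are just the line facets). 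Your outline for smoothness and irreducibility of $\Gr(\sQ)$ is fine and matches the paper's approach; the gap is entirely in the SDC half.
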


\noindent The analog of the second part for matroids $\sQ$ with 3 or more lines is false by \cite[Example~8.4]{CoreyGrassmannians}.

\begin{proof}
That $\Gr(\sQ)$ is smooth and irreducible follows from \cite[4.1-2]{CoreyGrassmannians}. Suppose $\sP\leq \sQ$ is internal. Given a simplifying set $S$ of $\sQ$, we have that  $\sP|S \leq \sQ|S$ and $\Gr(\sQ) \to \Gr(\sP)$ is an SDC-morphism provided $\Gr(\sQ|S) \to \Gr(\sP|S)$ is an SDC-morphism \cite[Lemma~C.5]{CoreyGrassmannians} (Appendix by Cueto). However,  $\sP|S \leq \sQ|S$ may no longer be internal. Therefore, it suffices to prove that 
$\varphi_{\sQ,\sP}$ is an SDC-morphism if $\sQ$ is simple with fewer than 2 lines, and $\sP\leq \sQ$ such that $\Delta(\sP)$ is a (not necessarily internal) facet of $\Delta(\sQ)$. 

By \cite[Proposition~5.1]{CoreyGrassmannians}, $\sP = \face_{-\epsilon_{\lambda}} \sQ$ where $|\lambda|\in\{1,n-1\}$ or $\lambda$ is a line.  If $\lambda$ is a line or $|\lambda| = n-1$, then $\varphi_{\sQ,\sP}$ is an SDC-morphism by \cite[4.1,5.3-4]{CoreyGrassmannians}. We are left to consider the case $|\lambda| = 1$, say $\lambda = \{a\}$. Suppose $\sQ$ has 2 lines $\lambda_1,\lambda_2$, which meet at a point. 

\textbf{Case 1}: $a\in \lambda_2 \setminus \lambda_1$. Say $a=3$, $\lambda_1 = \{1,2,4,\ldots,k+3\}$ and $\lambda_2 = \{1,3,k+4,\ldots,\ell+3\}$. Then $\Gr(\sQ)\to \Gr(\sP)$ is an SDC-morphism because
\begin{equation*}
    R_{\sQ} \cong S_{\sQ}^{-1}R_{\sP}[x_{3,k+1}^{\pm},\ldots,x_{3n}^{\pm}].
\end{equation*}

\textbf{Case 2}: $a\in \lambda_1 \cap \lambda_2$. We may assume $\lambda_1$ and $\lambda_2$ are as in the previous case, and that $a=1$. Then $\Gr(\sQ)\to \Gr(\sP)$ is an SDC-morphism because
\begin{equation*}
    R_{\sQ} = S_{\sQ}^{-1}R_{\sP}[x_{11}^{\pm},\ldots,x_{1n}^{\pm}]
\end{equation*}

\textbf{Case 3}: $a\notin \lambda_1 \cup \lambda_2$. Suppose $a=1$, $\lambda_1 = \{2,3,\ldots,k+3\}$, $\lambda_2 = \{3,k+4,\ldots,\ell+3\}$.
\begin{equation*}
    R_{\sQ} = S_{\sQ}^{-1}R_{\sP}[x_{1,k+1}^{\pm},\ldots,x_{1n}^{\pm}] / \langle x_{1c} x_{2\ell} - x_{2c} x_{1\ell} \, : \, k < c < \ell \rangle
\end{equation*}
As $x_{1c} \equiv x_{2c} x_{1\ell} / x_{2\ell}$ for $k<c<\ell$ in $R_{\sQ}$, we see that $R_{\sQ} \cong S_{\sQ}^{-1}R_{\sP}[x_{1,\ell}^{\pm},\ldots,x_{1n}^{\pm}]$, and so $\Gr(\sQ) \to \Gr(\sP)$ is an SDC-morphism.

Now suppose $\lambda_1 \cap \lambda_2 = \emptyset$.

\textbf{Case 1}: $a\in \lambda_1$. Say $a=3$, $\lambda_1 = \{3,4,\ldots,k+3\}$ and $\lambda_2 = \{1,2,k+4,\ldots,\ell+3\}$. Then
\begin{equation*}
    R_{\sQ} = S_{\sQ}^{-1}R_{\sP}[x_{3,k+1}^{\pm},\ldots,x_{3,n}^{\pm}] 
\end{equation*}

\textbf{Case 2}: $a \notin \lambda_1\cup \lambda_2$. Say $a=3$, $\lambda_1 = \{1,4,\ldots,k+3\}$ and $\lambda_2 = \{2,k+4,\ldots,\ell+3\}$. Then $\Gr(\sQ) \to \Gr(\sP)$ is an SDC-morphism since
\begin{align*}
 R_{\sQ} &= S_{\sQ}^{-1}R_{\sP}[x_{3,1}^{\pm},\ldots,x_{3,n}^{\pm}] \left/ \left\langle
 \begin{array}{ll}
    x_{2c} x_{3k} - x_{3c} x_{2k}  &  1 \leq c < k  \\
    x_{1c} x_{3\ell} - x_{3c} x_{1\ell} & k < c < \ell 
 \end{array} \right\rangle \right. \\
  &\cong S_{\sQ}^{-1}R_{\sP}[x_{31}^{\pm},x_{3k}^{\pm},x_{3\ell}^{\pm},x_{3,\ell+1}^{\pm},\ldots,x_{3n}^{\pm}]
\end{align*}
The cases where $\sQ$ has 0 or 1 lines are handled in a similar fashion (and in fact, are easier). 
\end{proof}

\begin{figure}[tbh!]
    \centering
    \includegraphics[width=0.8\textwidth]{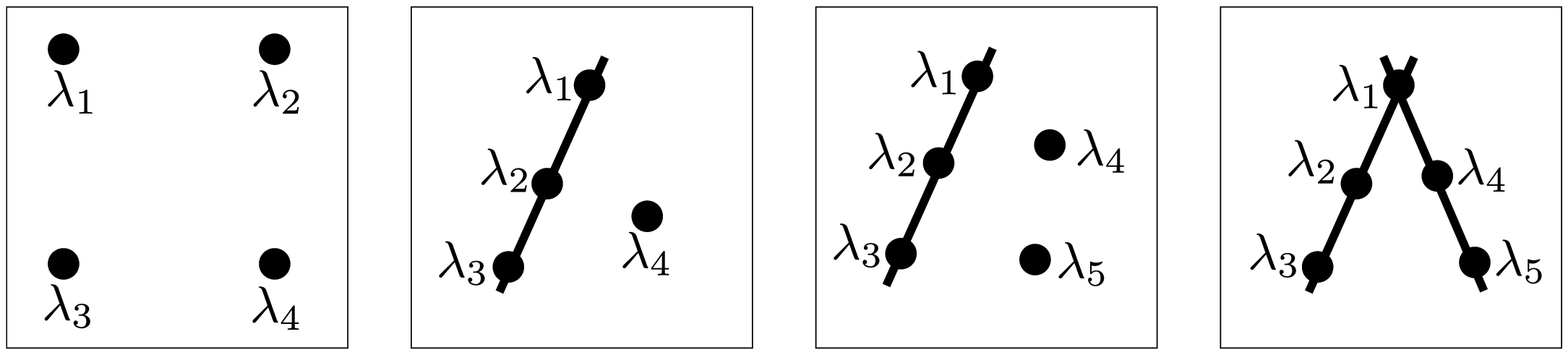}
    \caption{The matroids $\sU,\sU',\sV,\sW$}
    \label{fig:matroidsUVW}
\end{figure}

For many examples in this paper, we must isolate a few types of matroids. Given a partition of $[n]$ into $\lambda_1, \lambda_2, \lambda_3, \lambda_4$, let $\sU(\lambda_1, \lambda_2, \lambda_3, \lambda_4)$ be the matroid on the left in Figure \ref{fig:matroidsUVW}; i.e., this is the unique matroid whose rank-1 flats are the $\lambda_i$'s and whose simplification is the uniform $(3,4)$--matroid. Define in a similar fashion $\sU'(\lambda_1,\lambda_2,\lambda_3;\lambda_4)$, $\sV(\lambda_1, \lambda_2, \lambda_3 ; \lambda_4, \lambda_5)$ and $\sW(\lambda_1 ; \lambda_2, \lambda_3 ; \lambda_4, \lambda_5)$  the remaining matroids in Figure \ref{fig:matroidsUVW}. When clear from the context, we simply write $\sU,\sU',\sV,\sW$ for these matroids.  We record the following computation. 

\begin{proposition}
\label{prop:TSCUVW}
The thin Schubert cells of the matroids $\sU$, $\sU'$, $\sV$, and $\sW$ are smooth and irreducible. Their dimensions are
{\small
\begin{align*}
\begin{array}{ll}
      \dim \Gr(\sU) = |\lambda_1| + |\lambda_2| + |\lambda_3| + |\lambda_4| - 1 &
    \dim \Gr(\sU') = |\lambda_1| + |\lambda_2| + |\lambda_3| + |\lambda_4| - 2 \\
     \dim \Gr(\sV) =  |\lambda_1| + |\lambda_2| + |\lambda_3| + |\lambda_4| + |\lambda_5| &
    \dim \Gr(\sW) = |\lambda_1| + |\lambda_2| + |\lambda_3| + |\lambda_4| + |\lambda_5| - 1 
\end{array}
\end{align*}
}
Furthermore, for any pair $\sQ'\leq \sQ$ with $\sQ \cong \sU,\sV,\sW$, the map $\varphi_{\sQ,\sQ'}$ is an SDC-morphism. 
\end{proposition}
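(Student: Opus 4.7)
The plan is to produce explicit affine parameterizations of $\Gr(\sQ)$ for each $\sQ \in \{\sU, \sU', \sV, \sW\}$ via the construction of Section~\ref{sec:affineCoordinates}. For each $\sQ$ we pick a basis $\mu = \{a_1, a_2, a_3\}$ whose three elements lie in rank-1 flats of $\sQ$ that are not collinear, so that after the $\GL(3)$-normalization the columns of $A^{(\mu)}$ indexed by $\mu$ become $e_1, e_2, e_3$. Each remaining column in a parallel class $\lambda_i$ is then a scalar multiple of a fixed direction vector $v_{\lambda_i}$, and the coordinates of $v_{\lambda_i}$ forced to vanish are exactly those determined by the collinearity conditions (none for $\sU$; one vanishing coordinate in $v_{\lambda_3}$ for $\sU'$ and for $\sV$; one each in $v_{\lambda_3}$ and $v_{\lambda_5}$ for $\sW$). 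In the $\sV$-case one additional $2\times 2$ determinant must be nonzero (coming from the basis $\{a_3,a_4,a_5\}$), producing a localization of the Laurent torus at a single polynomial; for the other three matroids the remaining Pl\"ucker conditions hold automatically. Thus each $\Gr(\sQ)$ is an open subvariety of a Laurent polynomial torus, hence smooth and irreducible, and a direct parameter count produces the dimensions stated in the proposition.

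For the SDC-morphism claim on $\sU, \sV, \sW$, we note that each of these matroids has at most two lines in the sense of Section~\ref{sec:38mat} (namely $0, 1, 2$ respectively), so Proposition~\ref{prop:oneInternalFacet} gives directly that $\varphi_{\sQ, \sP}$ is an SDC-morphism whenever $\sP \leq \sQ$ is internal. For a non-internal $\sP$, we factor $\varphi_{\sQ, \sP}$ as a chain of facet morphisms, peeling off boundary facets first. Each boundary facet morphism corresponds to $\face_{-\epsilon_{\lambda}} \sQ$ with $|\lambda| \in \{1, n-1\}$, which is SDC by the explicit ring-theoretic calculations of Cases 1-3 in the proof of Proposition~\ref{prop:oneInternalFacet}. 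The internal facet morphisms that follow are handled again by Proposition~\ref{prop:oneInternalFacet}, applied to the intermediate matroid. Since SDC-morphisms compose, we conclude that $\varphi_{\sQ, \sP}$ is an SDC-morphism for every $\sP \leq \sQ$.

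The main obstacle is ensuring the chain-of-facets decomposition can be chosen so that every intermediate matroid retains the ``at most two lines'' hypothesis required by Proposition~\ref{prop:oneInternalFacet}. We handle this by stripping all boundary refinements before internal ones: the boundary facets of $\sU, \sV, \sW$ are themselves smaller instances of matroids of the same combinatorial type, so the intermediate matroid after any boundary step has at most as many lines as the original. Once all boundary elements have been peeled off, we are left with a chain of internal facet morphisms, each of which falls within the scope of Proposition~\ref{prop:oneInternalFacet}.
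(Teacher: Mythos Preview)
Your approach is essentially the paper's: you parameterize $\Gr(\sQ)$ explicitly via the affine coordinates of \S\ref{sec:affineCoordinates} (the paper does the same after reducing to the simplification via \cite[Lemmas~C.2,~C.5]{CoreyGrassmannians}), and you invoke Proposition~\ref{prop:oneInternalFacet} for the SDC claim, correctly observing that $\sU,\sV,\sW$ have $0,1,2$ lines respectively. The dimension counts and the extra localization for $\sV$ are accurate.

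Where you diverge is in attempting to cover \emph{non-internal} faces $\sQ'\leq\sQ$, since Proposition~\ref{prop:oneInternalFacet} as stated only treats internal $\sQ'$; the paper simply cites that proposition and leaves this implicit. Your chain-of-facets strategy is the right instinct, but two steps are not justified as written. First, you identify ``boundary facet morphisms'' with $\face_{-\epsilon_\lambda}\sQ$ for $|\lambda|\in\{1,n-1\}$; that is the description of facets of $\Delta(r,n)$, whereas a facet of $\Delta(\sQ)$ lies on $\partial\Delta(r,n)$ exactly when $\sQ|\lambda\oplus\sQ/\lambda$ acquires a loop or coloop, which is a different (and for these matroids, more restrictive) condition. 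Second, the assertion that intermediate matroids are ``smaller instances of the same combinatorial type'' is not quite right: passing to a facet produces a \emph{disconnected} matroid $\sQ|\lambda\oplus\sQ/\lambda$, and Proposition~\ref{prop:oneInternalFacet} is stated for connected $\sQ$, so you cannot re-apply it without further argument.

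A cleaner way to close the gap, given that you already have $\Gr(\sQ)$ as an explicit open subscheme of a torus, is to note that by \cite[Lemma~C.5]{CoreyGrassmannians} it suffices to treat the simplifications, which are matroids on $4$ or $5$ elements; there one checks directly (finitely many faces) that each $\varphi_{\sQ,\sQ'}$ is, in your coordinates, a coordinate projection of tori composed with an open immersion. This is in effect what the proof of Proposition~\ref{prop:oneInternalFacet} already establishes for all facets in the simple case, so composing down the face lattice finishes the argument.
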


\begin{proof}
The statements on smoothness and irreducibility of the thin Schubert cells, and that the maps are SDC-morphisms, follow from Proposition \ref{prop:oneInternalFacet}. 
To compute their dimensions, we can apply \cite[Lemmas~C.2,C.5]{CoreyGrassmannians} (Appendix by Cueto) to reduce to the simple-matroid case. For example, consider the matroid $\sQ = \sW(1;2,4;3,5)$. The ring $R_{\sQ}$ is
\begin{equation*}
    R_{\sQ} = \C[x_{11}^{\pm},x_{21}^{\pm},x_{21}^{\pm},x_{23}^{\pm}]
\end{equation*}
and therefore $\dim \Gr(\sQ) = 4$. The remaining cases may be handled in a similar fashion.
\end{proof}

\subsection{$B$-maximality}
Given an $(r,n)$-matroid $\sQ$ and a basis $\mu \in \sQ$, define
\begin{equation}
    d(\sQ,\mu) = \left| \{ \lambda \in \sQ \, : \, |\lambda \, \Delta \, \mu| = 2 \} \right|
\end{equation}
Observe that $d(\sQ,\mu)$ is the Krull dimension of $B_{\sQ}^{\mu}$, and so $d(\sQ,\mu) \geq \dim \Gr(\sQ)$.  Intuitively, a smaller $d(\sQ,\mu)$ produces a simpler ideal $I_{\sQ}^{\mu}$.  We say that $\sQ$ is $(B,\mu)$-\textit{maximal} if $\dim \Gr(\sQ) = d(\sQ,\mu)$, and $\sQ$ is $B$-\textit{maximal} if there is a $\mu\in \sQ$ such that $\sQ$ is $(B,\mu)$-maximal.

\begin{proposition}
\label{prop:conditionIdealTrivial}
Suppose $\sQ$ is a $(B,\mu)$-maximal matroid for some $\mu\in \sQ$.   
\begin{enumerate}
    \item The thin Schubert cell $\Gr(\sQ)$ is an open subscheme of $(\GG_m)^{d(\sQ,\mu)}$, and therefore smooth and irreducible. 
    \item If $\sP \leq \sQ$, then $\sP$ is an open subscheme of $(\GG_m)^{d(\sP,\mu)}$, and $\varphi_{\sQ,\sP}: \Gr(\sQ) \to \Gr(\sP)$ is an SDC-morphism.  
\end{enumerate}
\end{proposition}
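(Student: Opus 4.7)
My plan is to reduce to commutative algebra: first show that $(B,\mu)$-maximality of $\sQ$ forces $I_{\sQ}^{\mu} = 0$ in $B_{\sQ}^{\mu}$, and then bootstrap part (2) from this via a weight calculation tied to the face structure $\sP \leq \sQ$. For part (1), $B_{\sQ}^{\mu}$ is a polynomial ring in $d(\sQ,\mu)$ variables by definition, hence a finitely generated $\C$-domain of Krull dimension $d(\sQ,\mu)$. Since $\det A^{(\mu)}_{\mu \Delta \{i,\sa_{\mu}(j)\}} = \pm x_{ij}$, for each 1-swap basis in $\sQ$ the corresponding variable is a unit in $(S_{\sQ}^{\mu})^{-1}B_{\sQ}^{\mu}$. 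The assumed equality $\dim R_{\sQ}^{\mu} = d(\sQ,\mu) = \dim (S_{\sQ}^{\mu})^{-1}B_{\sQ}^{\mu}$, combined with integrality of the localization, forces the extension of $I_{\sQ}^{\mu}$ there to be zero; chasing denominators in the domain $B_{\sQ}^{\mu}$ then gives $I_{\sQ}^{\mu} = 0$ in $B_{\sQ}^{\mu}$ itself. Therefore $R_{\sQ}^{\mu} = (S_{\sQ}^{\mu})^{-1}\C[x_{ij}^{\pm} : \mu\Delta\{i,\sa_{\mu}(j)\} \in \sQ]$ is a localization of a Laurent polynomial ring, realizing $\Gr(\sQ)$ as an open subscheme of $\GG_m^{d(\sQ,\mu)}$.

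For part (2), fix $\sP \leq \sQ$, with $\mu \in \sP$ so that the map (2.7) is defined. The key claim is that $\sP$ is itself $(B,\mu)$-maximal, i.e., $I_{\sP}^{\mu} = 0$ in $B_{\sP}^{\mu}$; part (1) will then apply to $\sP$. Elements of $I_{\sP}^{\mu}$ are $\pi_{\sP}(\det A^{(\mu)}_{\lambda})$ for $\lambda\notin\sP$. When $\lambda \notin \sQ$, this vanishes because $\pi_{\sP}$ factors through $\pi_{\sQ}$ and $I_{\sQ}^{\mu} = 0$. For the substantive case $\lambda \in \sQ\setminus\sP$, choose a linear functional $\sv \in N_{\R}$ minimized on $\sQ$ exactly at $\sP$, let $m$ denote the minimum, and set $\ell := \langle \epsilon_{\lambda}^{*},\sv\rangle > m$. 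Expanding $\det A^{(\mu)}_{\lambda} = \sum_{\sigma} \sgn(\sigma) \prod_{i\in\mu\setminus\lambda} x_{i,\sigma(i)}$ over bijections $\sigma\colon \mu\setminus\lambda \to \lambda\setminus\mu$, a direct calculation yields the weight identity
\begin{equation*}
    \sum_{i\in \mu\setminus \lambda} \langle \epsilon^{*}_{\mu\Delta\{i,\sigma(i)\}},\sv\rangle \;=\; (|\mu\setminus \lambda|-1)\,m \,+\, \ell \;>\; |\mu\setminus \lambda|\cdot m.
\end{equation*}
Thus for every $\sigma$ some one-swap $\mu\Delta\{i,\sigma(i)\}$ has weight strictly greater than $m$, so it lies outside $\sP$; the corresponding variable is killed by $\pi_{\sP}$, and the monomial dies. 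Hence $\pi_{\sP}(\det A^{(\mu)}_{\lambda}) = 0$, as desired.

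To show $\varphi_{\sQ,\sP}$ is an SDC-morphism, I apply the same weight identity to $\lambda \in \sP$ (where $\ell = m$): the sum of one-swap weights equals $|\mu\setminus\lambda|\cdot m$, so any bijection $\sigma$ contributing to $\pi_{\sQ}(\det A^{(\mu)}_{\lambda})$ must have all its one-swaps at weight $m$, i.e., in $\sP$. Consequently $\pi_{\sQ}(\det A^{(\mu)}_{\lambda}) = \pi_{\sP}(\det A^{(\mu)}_{\lambda})$, and the inclusion $B_{\sP}^{\mu} \hookrightarrow B_{\sQ}^{\mu}$ sends $S_{\sP}^{\mu}$ into $S_{\sQ}^{\mu}$. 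Then $\varphi_{\sQ,\sP}$ is identified with the restriction of the coordinate projection $\GG_m^{d(\sQ,\mu)} \to \GG_m^{d(\sP,\mu)}$ to the open subscheme $\Gr(\sQ)$, landing in $\Gr(\sP)$: smoothness is inherited from the projection, dominance follows from injectivity of the induced ring map (it is an inclusion of localizations of polynomial rings), and each nonempty fiber is an open subscheme of the irreducible torus $\GG_m^{d(\sQ,\mu)-d(\sP,\mu)}$, hence irreducible and connected. The main obstacle I foresee is the weight identity itself, since the combinatorial bookkeeping of one-swaps under $\sv$ must be carried out with care and reconciled with the paper's sign conventions for $\face_{\sv}$; once this is in hand, the rest of the argument is essentially routine commutative algebra.
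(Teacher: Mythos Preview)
Your proof is correct. Part~(1) matches the paper's argument. For part~(2), you and the paper take different routes to the key claim $I_{\sP}^{\mu}=0$. The paper argues in one line: the ring map $(\varphi_{\sQ,\sP})^{\#}\colon R_{\sP}^{\mu}\to R_{\sQ}^{\mu}$, $x_{ij}\mapsto x_{ij}$, is well-defined by \cite[Proposition~3.2]{CoreyGrassmannians}, so any element of $I_{\sP}^{\mu}\subset B_{\sP}^{\mu}\subset B_{\sQ}^{\mu}$ must die in $R_{\sQ}^{\mu}=(S_{\sQ}^{\mu})^{-1}B_{\sQ}^{\mu}$; since the latter is a localization of a domain containing $B_{\sP}^{\mu}$, this forces $I_{\sP}^{\mu}=0$. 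You instead give a self-contained proof via the weight identity, directly verifying that each generator $\pi_{\sP}(\det A^{(\mu)}_{\lambda})$ vanishes monomial by monomial. Your approach avoids appealing to the cited reference and in effect re-derives the relevant content of that proposition in this special case; it also makes explicit the inclusion $S_{\sP}^{\mu}\subset S_{\sQ}^{\mu}$, which the paper leaves implicit. Both arguments then conclude identically, by identifying $\varphi_{\sQ,\sP}$ with the restriction of a coordinate projection of tori to open subschemes, from which the SDC properties are immediate.
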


\begin{proof}
Without loss of generality, assume that $\mu = [r]$, in particular $\sP$ and $\sQ$ have $[r]$ as a basis. The equality $\dim \Gr(\sQ) = d(\sQ, \mu)$ implies that $I_{\sQ} = \langle 0 \rangle$; this proves (1). 

Given the nature of the map $(\varphi_{\sQ,\sP})^{\#}:R_{\sQ} \to R_{\sP}$ as described by Formula \ref{eq:TSCMapRings} we also have that $I_{\sP} = \langle 0\rangle$. With $a=d(\sP,\mu)$ and $b=d(\sQ,\mu)$, we have that $\Gr(\sP)$ and $\Gr(\sQ)$  are open subschemes of $\GG_m^{a}$ and $\GG_m^{b}$, respectively. The morphism $\varphi_{\sQ,\sP}:\Gr(\sQ) \to \Gr(\sP)$ is induced by a coordinate projection $\GG_m^a \to \GG_m^b$, and therefore it is an SDC-morphism. This completes the proof of (2). 
\end{proof}

Suppose $\sw \in N(\sQ)$ such that $\pQ(\sw)$ is matroidal, $\Sigma\subset \TS(\sw)$ a connected subcomplex (in most applications, $\Sigma$ is the subcomplex $\Sigma_{\sL}$ obtained by removing all leaf vertices and edges from $\TS(\sw)$), and  $\sF$ is a fin of $\Sigma$. Assume that the vertices and edges of $\sF$ are labeled as in \S \ref{sec:removeFins}, and let $\sQ_{i}$, $\sQ_{i,i+1}$, $\sQ_{\sF}$ be the matroids corresponding to the vertex $v_i$, edge $e_{i,i+1}$, and fin $\sF$, respectively. Denote by $\Gr_{\pE}(\sF)$ the fiber product from Formula \eqref{eq:limitExposed} applied to $Z = \Gr$. The morphisms $\Gr(\sQ_1) \to \Gr(\sQ_{1,k})$ and $\Gr(\sQ_k) \to \Gr(\sQ_{k-\ell,k-\ell+1})$ induce a morphism
\begin{equation*}
    \varphi_{\sF}: \Gr_{\pE}(\sF) \to \Gr(\sQ_{1,k}) \times_{\Gr(\sQ_{\sF})}  \Gr(\sQ_{k-\ell,k-\ell+1}).
\end{equation*}
We say that the fin $\sF$ is $B$-\textit{maximal} if there is a $\mu \in \sQ_{\sF}$ such that  $d(\sQ_i,\mu) = \dim \Gr(\sQ_i)$ for $i=1,\ldots,k-\ell$. 

\begin{proposition}
\label{prop:finSDC}
If $\sF$ is $B$-maximal, then $\varphi_{\sF}$ is an SDC-morphism.
\end{proposition}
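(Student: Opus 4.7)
The plan is to use $B$-maximality to realize every scheme and morphism appearing in $\varphi_\sF$ as an open subscheme of an algebraic torus and a coordinate projection between such tori, respectively; the SDC property then follows from standard properties of coordinate projections.

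First, by the $B$-maximality hypothesis, each $\sQ_i$ for $i = 1, \ldots, k-\ell$ is $(B,\mu)$-maximal, and since $\mu \in \sQ_\sF$ is contained in every matroid sitting above $\sQ_\sF$ in the face order, $\mu$ is a basis of each of $\sQ_{i,i+1}$, $\sQ_{k,1}$, $\sQ_{k-\ell-1,k-\ell}$, and $\sQ_\sF$. Since $\sQ_\sF \leq \sQ_{i,i+1} \leq \sQ_i$, $\sQ_\sF \leq \sQ_{k,1} \leq \sQ_1$, and $\sQ_\sF \leq \sQ_{k-\ell-1,k-\ell} \leq \sQ_{k-\ell-1}$, Proposition~\ref{prop:conditionIdealTrivial}(2) implies that each of these matroids is itself $(B,\mu)$-maximal. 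Consequently, every thin Schubert cell occurring in the construction of $\varphi_\sF$ is an open subscheme of a torus $\GG_m^{d(-,\mu)}$, and each face-inclusion morphism between them is an SDC-morphism realized as a coordinate projection.

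Second, writing $T_\sQ$ for the basis-exchange set of $\sQ$ with respect to $\mu$, the polyhedral fact that two cells of the matroidal subdivision meet in a common face yields $T_{\sQ_{i,i+1}} = T_{\sQ_i} \cap T_{\sQ_{i+1}}$ and $T_{\sQ_\sF} = T_{\sQ_{k,1}} \cap T_{\sQ_{k-\ell-1,k-\ell}}$. Using the identity
\[
\C[x_t:t\in A] \otimes_{\C[x_t:t\in A\cap B]} \C[x_t:t\in B] = \C[x_t:t\in A\cup B],
\]
iterated application yields that $\Gr_\pE(\sF)$ is an open subscheme of $\GG_m^{|T|}$ with $T = \bigcup_{i=1}^{k-\ell-1} T_{\sQ_i}$, while the target $\Gr(\sQ_{k,1}) \times_{\Gr(\sQ_\sF)} \Gr(\sQ_{k-\ell-1,k-\ell})$ is an open subscheme of $\GG_m^{|T'|}$ with $T' = T_{\sQ_{k,1}} \cup T_{\sQ_{k-\ell-1,k-\ell}} \subseteq T$. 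Unwinding the universal property of the fiber products, $\varphi_\sF$ is the restriction of the coordinate projection $\GG_m^{|T|} \twoheadrightarrow \GG_m^{|T'|}$ corresponding to the inclusion $T' \subseteq T$.

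The SDC property is then immediate from this identification: the coordinate projection of tori is smooth and remains smooth after restriction to open subschemes; the corresponding ring map is the inclusion of a localized polynomial ring into a larger localized polynomial ring, hence injective, so $\varphi_\sF$ is dominant; and each nonempty fiber is a principal open subscheme of $\GG_m^{|T\setminus T'|}$ cut out by specializations of the localizing polynomials, hence irreducible and therefore connected. The main obstacle I anticipate is the polyhedral bookkeeping in the second paragraph, in particular verifying the set-theoretic identifications $T_{\sQ_{i,i+1}} = T_{\sQ_i} \cap T_{\sQ_{i+1}}$ and $T_{\sQ_\sF} = T_{\sQ_{k,1}} \cap T_{\sQ_{k-\ell-1,k-\ell}}$ so that the iterated tensor products collapse to polynomial rings in the union of variables.
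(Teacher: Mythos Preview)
Your proof is correct and follows essentially the same approach as the paper, which likewise reduces everything to the observation that the relevant schemes are open subschemes of tori and the maps are restrictions of coordinate projections. Your anticipated obstacle is not serious: the identity $T_{\sQ_{i,i+1}} = T_{\sQ_i} \cap T_{\sQ_{i+1}}$ follows because two maximal cells of a polyhedral subdivision sharing a facet meet exactly in that facet, and $T_{\sQ_\sF} = T_{\sQ_{1,k}} \cap T_{\sQ_{k-\ell-1,k-\ell}}$ follows by the dual observation that any cell of the tight span containing two distinct edges of the polygon $\sF$ must have dimension at least two, hence equals $\sF$; the paper's own proof simply asserts the resulting torus-and-projection description without spelling out these identifications.
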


\begin{proof}
    Without loss of generality, assume that $\mu = [r]$; in particular each $\sQ_i$, $\sQ_{i,i+1}$, and $\sQ_F$ has $[r]$ as a basis. By Proposition \ref{prop:conditionIdealTrivial}, we have that
    \begin{equation*}
         \Gr(\sQ_i) \subset (\GG_m)^{d(\sQ_i,\mu)} \hspace{20pt} \Gr(\sQ_{i,i+1}) \subset (\GG_m)^{d(\sQ_{i,i+1},\mu)} \hspace{20pt} \Gr(\sQ_{\sF}) \subset (\GG_m)^{d(\sQ_{\sF},\mu)} 
    \end{equation*}
    as open subschemes, and the morphisms
    \begin{equation*}
        \Gr(\sQ_{i}) \to \Gr(\sQ_{i,i+1}) \hspace{20pt} \Gr(\sQ_{1,k}) \to \Gr(\sQ_{\sF}) \hspace{20pt} \Gr(\sQ_{k-\ell,k-\ell+1}) \to \Gr(\sQ_{\sF}) 
    \end{equation*}
    are induced by coordinate projections of tori.  This implies that the schemes $\Gr_{\pE}(\sF)$ and  $\Gr(\sQ_{1,k}) \times_{\Gr(\sQ_{\sF})}  \Gr(\sQ_{k-\ell,k-\ell+1})$ may be realized as open subschemes of tori, and the morphism $\varphi_{\sF}$ is induced a coordinate projection of tori. Therefore,  $\varphi_{\sF}$ is an SDC-morphism. 
\end{proof}

\section{A reducible initial degeneration}
\label{sec:reducibleInitDeg}

Recall from the introduction and \S\ref{sec:38mat} that $\spMat$ is the $(3,8)$-matroid with nonbases as in \eqref{eq:spMat}. Let $\spw$ be the co-rank vector of this matroid. The cone of $\pS_{\trop}(3,8)$, the secondary fan structure of $\TGr_0(3,8)$ described in \ref{sec:Grassmannian}, containing $\spw$ in its relative interior is
\begin{equation*}
    \spCone = \R_{\geq 0} \langle \se_{126}, \se_{145}, \se_{178}, \se_{235}, \se_{248}, \se_{347}, \se_{368}, \se_{567} \rangle + L_{\R}. 
\end{equation*}
The tight-span $\TS(\spw)$ is a star-shaped tree. The matroid of the center node is $\spMat$. There are 8 leaves, one corresponding to each nonbasis of $\spMat$. The matroids of the leaf vertices and adjacent edges are
\begin{equation*}
    \sQ(ijk)  = \sU(i,j,k, [8]\setminus ijk), \hspace{10pt} \text{and} \hspace{10pt} \sQ'(ijk)  = \sU'(i,j,k; [8]\setminus ijk),
\end{equation*}
respectively, where $\sU$ and $\sU'$ are the left 2 matroids in Figure \ref{fig:matroidsUVW}, and $ijk$ run through the nonbases of $\spMat$.

\begin{proposition}
\label{prop:specialLimit}
Let $\pC$ be any cone of $\pS_{\trop}(3,8)$ in the $\Sn{8}$-orbit of $\spCone$, and $\sw$ in the relative interior of $\pC$. The inverse limit $\Gr(\sw)$ is smooth, $15$-dimensional, and has 2 connected components. 
\end{proposition}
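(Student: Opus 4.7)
The plan is to reduce to $\sw=\spw$ by $\Sn{8}$-symmetry, and then compute the limit over the star-shaped tight-span $\TS(\spw)$ using the fiber-product machinery of \S\ref{sec:toolsForLimits} together with the structural facts from Propositions \ref{prop:spMat2Components} and \ref{prop:TSCUVW}.

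First I would observe that the matroids labeling the cells of $\pQ(\sw)$ depend only on which cone of $\pS_{\trop}(3,8)$ contains $\sw$ in its relative interior, and that $\Sn{8}$ permutes the cones in the orbit of $\spCone$ in a way that induces isomorphisms on the associated diagrams of thin Schubert cells. Hence it suffices to treat $\sw=\spw$, where the discussion preceding the proposition has already identified $\TS(\spw)$ as a star with center labeled by $\spMat$, eight leaves labeled by the matroids $\sQ(ijk)=\sU(i,j,k,[8]\setminus ijk)$, and the incident edges labeled by $\sQ'(ijk)=\sU'(i,j,k;[8]\setminus ijk)$.

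Next I would apply Proposition \ref{prop:limitTSGamma} to identify $\Gr(\spw)$ with the limit over this star tree, which unwinds to the explicit fiber product
\begin{equation*}
    \Gr(\spw)\;\cong\;\Gr(\spMat)\;\times_{\prod_{i=1}^{8}\Gr(\sQ'(ijk))}\;\prod_{i=1}^{8}\Gr(\sQ(ijk)).
\end{equation*}
By Proposition \ref{prop:TSCUVW}, each factor on the right is smooth and irreducible, of dimension $7$ and $6$ respectively, and each edge-map $\Gr(\sQ(ijk))\to\Gr(\sQ'(ijk))$ is an SDC-morphism. Since SDC-morphisms are closed under products, the horizontal map $\prod_i\Gr(\sQ(ijk))\to\prod_i\Gr(\sQ'(ijk))$ is smooth and surjective with connected fibers.

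With these ingredients assembled, the conclusion follows from Proposition \ref{prop:smoothIrreducibleFiberProduct}(2) applied with $Y=\Gr(\spMat)$: by Proposition \ref{prop:spMat2Components}, $Y$ is smooth, equidimensional of dimension $7$, and has exactly two connected components, so the fiber product is smooth with two connected components, of dimension $7+8\cdot 7-8\cdot 6=15$. The main conceptual point—and the reason Theorem \ref{thm:2ConnectedComponentsInDegIntro} holds in the first place—is that $\Gr(\spMat)$ is disconnected, which forces the use of part (2) rather than part (1) of Proposition \ref{prop:smoothIrreducibleFiberProduct}; this is precisely the mechanism that transfers the two components of $\Gr(\spMat)$ through the fiber product to $\Gr(\spw)$. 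The arithmetic obstruction underlying that disconnection (non-realizability of $\spMat$ over $\R$) is already encapsulated in Proposition \ref{prop:spMat2Components}, so no further case-analysis is needed.
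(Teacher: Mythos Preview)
Your proof is correct and follows essentially the same route as the paper's: reduce to $\sw=\spw$, identify $\Gr(\spw)$ with the fiber product $\Gr(\spMat)\times_{Y_{\Leaf}'}Y_{\Leaf}$ over the star-shaped tight span, and apply Proposition~\ref{prop:smoothIrreducibleFiberProduct}(2) together with Proposition~\ref{prop:spMat2Components}. The only minor slip is that you pass from ``SDC-morphism'' (dominant) to ``surjective'' when invoking part~(2); the paper makes the same implicit leap, and it is harmless here since each $\Gr(\sQ(ijk))\to\Gr(\sQ'(ijk))$ is in fact a coordinate projection of tori, hence surjective.
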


\begin{proof}
It suffices to consider only $\sw = \spw$.  With
\begin{equation*}
    Y_{\Leaf} =\prod_{ijk} \sQ(ijk) \hspace{20pt} \text{and} \hspace{20pt} Y_{\Leaf}' = \prod_{ijk} \sQ'(ijk). 
\end{equation*}
we have 
\begin{equation*}
    \Gr(\sw) \cong \Gr(\spMat) \times_{Y_{\Leaf}'} Y_{\Leaf} 
\end{equation*}
As the morphism $Y_{\Leaf} \to Y_{\Leaf}'$ is smooth and surjective with connected fibers, its pullback  $\varphi:\Gr(\sw) \to \Gr(\spMat)$ is also smooth and surjective with connected fibers. The thin Schubert cell $\Gr(\spMat)$ is smooth and 7-dimensional with 2 connected components by Proposition \ref{prop:spMat2Components}. Therefore, $\Gr(\sw)$ is smooth and 15-dimensional with 2 connected components by Proposition \ref{prop:smoothIrreducibleFiberProduct}. 
\end{proof}

\noindent We  prove Theorem \ref{thm:2ConnectedComponentsInDegIntro} in the following form. 
\begin{theorem}
\label{thm:2connectedComponents}
Let $\pC$ be any cone of $\pS_{\trop}(3,8)$ in the $\Sn{8}$-orbit of $\spCone$, and $\sw$ in the relative interior of $\pC$. The initial degeneration $\init_{\sw} \Gr_0(3,8)$ is smooth and has two connected components.
\end{theorem}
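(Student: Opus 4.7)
The plan is to upgrade the closed immersion $i : \init_{\sw}\Gr_0(3,8) \hookrightarrow \Gr(\sw)$ of Theorem~\ref{thm:closedImmersion} to an isomorphism; the statement then follows at once from Proposition~\ref{prop:specialLimit}. By $\Sn{8}$-equivariance we may assume $\sw = \spw$. Write $X := \init_{\sw}\Gr_0(3,8)$ and $\Gr(\sw) = Y_1 \sqcup Y_2$ for the two smooth irreducible $15$-dimensional components supplied by Proposition~\ref{prop:specialLimit}. Because $\Gr_0(3,8)$ is irreducible, its Gr\"obner degeneration is a flat family whose special fibre is equidimensional of the same dimension (a standard property of tropical initial degenerations of prime ideals), so $X$ has pure dimension $15$. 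The clopen decomposition $X = X_1 \sqcup X_2$ with $X_i := X \cap Y_i$ then has each non-empty $X_i$ of pure dimension $15$, and hence equal to $Y_i$ by \cite[Proposition~A.8]{CoreyGrassmannians}. The proof thus reduces to producing, for each $i \in \{1,2\}$, a single point of $X$ inside $Y_i$.

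From the pullback presentation $\Gr(\sw) \cong \Gr(\spMat)\times_{Y_{\Leaf}'}Y_{\Leaf}$ appearing in the proof of Proposition~\ref{prop:specialLimit}, the projection $\pi : \Gr(\sw) \to \Gr(\spMat)$ onto the central vertex of $\TS(\sw)$ is smooth and surjective with connected fibres, so the two components of $\Gr(\sw)$ are precisely the preimages of the two components of $\Gr(\spMat)$. By Proposition~\ref{prop:spMat2Components}, the latter correspond to the two roots $a_1,a_2$ of $x^2 - x + 1 = 0$ in the representing matrix $A'$ of \eqref{eq:matrixMatroidsp}. Fix a sufficiently generic complex $3 \times 8$ matrix $B$ and, for $i=1,2$, set $A_i(t) := A'(a_i) + tB$, viewed as a matrix over $\C[[t]]$. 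For $\lambda$ a basis of $\spMat$, the Pl\"ucker coordinate $\zeta_\lambda(A_i(t))$ has non-zero constant term $\zeta_\lambda(A'(a_i))$, hence valuation $0$; for $\lambda$ a non-basis of $\spMat$, the constant term vanishes while the coefficient of $t$ is a non-trivial linear form in the entries of $B$, and so is non-zero for generic $B$, giving valuation exactly $1$. These are precisely the coordinates of $\spw$, so each $A_i(t)$ defines a $\C((t))$-point of $\Gr_0(3,8)$ with tropicalization $\spw$, whose associated initial point lies in $X$ and whose image under $\pi$ is represented by $A'(a_i)$. The two images lie in distinct components of $\Gr(\spMat)$, so their lifts lie in $Y_1$ and $Y_2$ respectively, which is what we needed.

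The main obstacle is the verification that, for each of the eight non-bases $\lambda$ of $\spMat$, the coefficient of $t$ in $\zeta_\lambda(A'(a_i) + tB)$ is indeed a non-trivial linear functional of the entries of $B$. This amounts to a direct calculation with eight of the $3\times 3$ minors of $A'(a_i) + tB$; each of the eight non-vanishing conditions is satisfied on a dense open subset of the parameter space of $3\times 8$ matrices $B$, so their conjunction defines a non-empty open set. A secondary point, also used above, is the pure-dimensionality of $X$; rather than quoting it, one could bypass it by noting that each $A_i(t)$ in fact moves in a $15$-parameter family (varying $A'(a_i)$ across all matrices representing $\spMat$ and varying $B$), and so the image of $X$ in $Y_i$ contains a dense set, forcing $X_i = Y_i$ directly.
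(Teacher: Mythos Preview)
Your proof is correct and follows essentially the same approach as the paper: reduce to showing the closed immersion $\init_{\sw}\Gr_0(3,8)\hookrightarrow\Gr(\sw)$ hits both components, and exhibit for each root $a_i$ of $x^2-x+1$ a perturbation of the representing matrix $A'(a_i)$ over $\C[[t]]$ whose Pl\"ucker vector tropicalizes to $\spw$. The only difference is cosmetic: the paper writes down an explicit perturbed matrix $A_t$ and invokes \cite[Remark~3.6]{CoreyGrassmannians} for the image under $\pi$, whereas you use a generic linear perturbation $A'(a_i)+tB$ and argue the $t$-coefficient of each non-basis minor is a nonzero linear form in $B$ (which indeed follows since $\spMat$ is simple, so each such $3\times 3$ submatrix has rank exactly $2$).
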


\begin{proof}
It suffices to consider the case $\sw = \spw$. 
Being the limit of a flat degeneration of $\Gr_0(3,8)$, the initial degeneration  $\init_{\sw} \Gr_0(3,8)$ is 15-dimensional. As $\init_{\sw}\Gr_0(3,8)$ is a smooth and 15-dimensional affine scheme with 2 connected components, the closed immersion from Theorem \ref{thm:closedImmersion} is an isomorphism if its image meets the two components of $\Gr(\sw)$. 
Denote by $\psi:\init_{\sw}\Gr_0(3,8) \to X(\spMat)$ the composition
\begin{equation*}
 \init_{\sw}\Gr_0(3,8) \xrightarrow{\varphi_{\sw}}  \Gr(\sw) \to \Gr(\spMat) \to X(\spMat)
\end{equation*}
where the middle morphism is $\varphi$ from the proof of Proposition \ref{prop:specialLimit} and the right morphism is the quotient by the diagonal torus $H\subset \PGL(8)$; both morphisms are surjective with connected fibers. Consider the $\C(t)$-valued matrix
\begin{equation*}
    A_t = \begin{bmatrix}
     1 & 0 & 0 & 1 & t & 1+4t & 1-t & 1-t^2 \\
     0 & 1 & 0 & 1+t & 1+t & a+t & 1-2t & a-t \\
     0 & 0 & 1 & 1+2t & 1+3t & 2t & 1-a + t & 1+t^2
\end{bmatrix}
\end{equation*}
where $a\in \C$ is a solution of $x^2-x+1=0$. Denote by $\vec{p}(A_{t})$ the vector of homogeneous Pl\"ucker coordinates of $A_{t}$. The tropicalization  (i.e., coordinatewise valuation with respect to the $t$-adic valuation on $\C(t)$) of  $\vec{p}(A_t)$ is $\sw$, so its exploded tropicalization  $\vec{x}(A_{t}) = \mathfrak{Trop}(\vec{p}(A_t))$ (in the sense of \cite{Payne}) lies in $\init_{\sw}\Gr_0(3,8)$. By \cite[Remark~3.6]{CoreyGrassmannians} the morphism $\psi$ takes $\vec{x}(A_{t})$ to $\vec{p}(A)$, where $A$ is the matrix from Formula \eqref{eq:matrixMatroidsp}. Switching between the two solutions $a$ of $x^2-x+1 =0$, this shows that $\psi$ is surjective. As the composition $\Gr(\sw) \to X(\spMat)$ is continuous and surjective and $\varphi_{\sw}$ is continuous, the image of $\varphi_{\sw}$ meets the two connected components of $\Gr(\sw)$, as required. 
\end{proof}

\section{Proofs of Theorem \ref{thm:schoenIntro} and \ref{thm:connectedInitDegIntro}}
\label{sec:proofThms1And3}

In this section we prove Theorems  \ref{thm:schoenIntro} and \ref{thm:connectedInitDegIntro}. We do so by showing that, except for $\sw$ contained in the relative interior of a cone in the $\Sn{8}$--orbit of $\spCone$, the inverse limits $\Gr(\sw)$ of $(3,8)$-matroid strata are smooth and irreducible of dimension $15$, and therefore so are $\init_{\sw}\Gr_0(3,8)$ by Corollary \ref{cor:limit2Init}.  We begin by describing how to compute the coordinate ring of an inverse limit of thin Schubert cells using the affine coordinates from \S \ref{sec:affineCoordinates}.

\subsection{Coordinate rings for finite inverse limits of thin Schubert cells}

First, consider the more general setup.  Let $\Delta\subset M_{\R}$ be any lattice polytope with vertex-set $\sQ$. For $\sw \in N(\sQ)_{\R}$, let $\pQ(\sw)$ be the corresponding regular subdivision of $\Delta$, and $\TS(\sw)$ its tight-span.  Given a cell $\sC$ of $\TS(\sw)$, denote by $\Delta_{\sC}$ the corresponding cell of $\pQ(\sw)$. Let $\Sigma \subset \TS(\sw)$ be a connected subcomplex, and $\Delta \subset \pQ(\sw)$ its dual (i.e., the union of cells $\Delta_{\sC}$ for $\sC \subset \Sigma$). We say that $\Sigma$ is \textit{vertex-intersecting} if
\begin{equation*}
    \bigcap_{v \in V(\Sigma)} \Delta_{v} \neq \emptyset
\end{equation*}
and $\Sigma$ is \textit{vertex-connecting} for each vertex $x $ of $\Delta$, the subcomplex of $\Sigma$ formed by the cells $\sC$ with $x \in \Delta_{\sC}$ is connected. 

\begin{proposition}
\label{prop:vertexConnectingLeaf}
Suppose $(v,e)$ is a leaf pair of $\Sigma$, and $\Sigma'$ is the subcomplex obtained by removing $v$ and $e$. If  $\Sigma$ is vertex-connecting  then  $\Sigma'$ is vertex-connecting.  
\end{proposition}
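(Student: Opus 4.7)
My plan is to verify the vertex-connecting property of $\Sigma'$ by a short case analysis on how a fixed vertex $x$ of $\Delta'$ meets the removed cells. Let $v'$ denote the other endpoint of $e$, so that $\Delta_e = \Delta_v \cap \Delta_{v'}$ (since $v$ and $v'$ are the two faces of $e$ in $\TS(\sw)$, and face relations in the tight span are opposite those in $\pQ(\sw)$). The key combinatorial input I plan to use is that a leaf vertex of $\Gamma$ cannot belong to any $2$-cell of $\Sigma$: any such $2$-cell containing $v$ would contribute at least two boundary edges adjacent to $v$, contradicting $v$ having degree one in $\Gamma$. Consequently the edge $e$ also lies in no $2$-cell, and $\{v,e\}$ attaches to $\Sigma'$ only through the single vertex $v'$.

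With that in place, fix a vertex $x$ of $\Delta'$ and write $\Sigma_x$ (respectively $\Sigma'_x$) for the subcomplex of $\Sigma$ (respectively $\Sigma'$) consisting of cells $\sC$ with $x \in \Delta_{\sC}$. Since $\Sigma' \subset \Sigma$, every cell of $\Delta'$ is a cell of $\Delta$, so $x$ is also a vertex of $\Delta$; by hypothesis $\Sigma_x$ is connected, and $\Sigma'_x$ is simply $\Sigma_x$ with whichever of $v$ and $e$ happen to lie in $\Sigma_x$ removed.

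If $x \notin \Delta_v$, then $v \notin \Sigma_x$ and, since $\Delta_e \subset \Delta_v$, also $e \notin \Sigma_x$, so $\Sigma'_x = \Sigma_x$ is connected. If instead $x \in \Delta_v$, note first that $\Sigma'_x$ is nonempty, because $x$ is a vertex of $\Delta'$ and so lies in $\Delta_{\sC}$ for some $\sC \subset \Sigma'$. I claim $x \in \Delta_{v'}$: otherwise $x \notin \Delta_e$, and then in $\Sigma_x$ the vertex $v$ has no incident edge and lies in no $2$-cell, so $v$ is isolated; connectedness of $\Sigma_x$ forces $\Sigma_x = \{v\}$, which gives $\Sigma'_x = \emptyset$, a contradiction. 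Hence $v$, $e$, and $v'$ all lie in $\Sigma_x$, and since $v$ is attached to the rest of $\Sigma_x$ only through $e$, removing the leaf pair $(v,e)$ from the connected complex $\Sigma_x$ leaves a connected subcomplex containing $v'$, namely $\Sigma'_x$.

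I expect the only real subtlety to be the initial combinatorial observation that a leaf vertex cannot meet any higher-dimensional cell; after that the proof is a routine finite case check. The only other point requiring care is to invoke the hypothesis that $x$ is a vertex of $\Delta'$ (and not merely of $\Delta$) in order to rule out the degenerate sub-case $\Sigma_x = \{v\}$.
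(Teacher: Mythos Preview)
Your proof is correct. The one point worth making explicit is that your observation ``a leaf vertex belongs to no $2$-cell of $\Sigma$'' automatically upgrades to ``a leaf vertex belongs to no cell of dimension $\geq 2$'': if $v$ lay in some $k$-cell with $k\geq 2$, then (since $\Sigma$ is a polyhedral complex, hence closed under taking faces) $v$ would lie in a $2$-face of that cell, contradicting your claim. You use this implicitly when asserting that $v$ is isolated in $\Sigma_x$ once $e\notin\Sigma_x$, so it is worth stating.

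The paper's proof is a one-line geometric argument: it invokes a hyperplane in $M_{\R}$ separating $\Delta_v$ from the rest of $\Delta$ along $\Delta_e$, which immediately forces any $x\in\Delta_v$ that also lies in some $\Delta_{\sC}$ with $\sC\in\Sigma'$ to lie in $\Delta_e$. Your argument reaches the same key conclusion ($x\in\Delta_v$ and $\Sigma'_x\neq\emptyset$ imply $x\in\Delta_e$) by a purely combinatorial route through the cell structure of $\Sigma$, never leaving the tight-span side. This has the mild advantage of being self-contained and of not requiring one to verify the separating-hyperplane claim in the generality where $\Sigma$ is an arbitrary connected subcomplex of $\TS(\sw)$ (so $\Delta_v$ may have interior facets in $\pQ(\sw)$ not dual to edges of $\Sigma$). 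The paper's approach is terser and more geometric; yours is more explicit. Both are valid, and after the key step the arguments coincide.
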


\begin{proof}
This follows from the fact that there is a hyperplane in $M_{\R}$ that separates $\Delta_{v}$ from the rest of $\Delta$ along $\Delta_e$. 
\end{proof}

\begin{proposition}
\label{prop:vertexConnectingFin}
Suppose $\sF$ is a fin of $\Sigma$ whose connecting path has length 1, and $\Sigma'$ is the subcomplex obtained by removing $\sF^{\circ}$ and all of its exposed vertices and edges. If $\Sigma$ is vertex-connecting then $\Sigma'$ is vertex-connecting. 
\end{proposition}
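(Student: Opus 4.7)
The plan is to verify vertex-connectivity pointwise via a path-rerouting argument. Fix a vertex $x$ of $\Delta'$; the goal is to show that $(\Sigma')_x$ is connected by replacing any excursion of a path through a removed cell with a detour through the retained connecting path $P = \{v_{k-1}, v_k, e_{k-1,k}\}$. Let $R$ denote the collection of cells removed from $\Sigma$, namely $\sF$ together with its exposed vertices and edges. By the definition of \emph{exposed} combined with the fin hypothesis that the connecting path has length $1$ (which, in particular, forces $\sF$ to have no cofaces in $\Sigma$), each cell of $R$ is incident in $\Sigma$ only to cells of $R \cup P$. Decomposing $\Sigma_x = R' \sqcup I \sqcup E$ with $R' = \Sigma_x \cap R$, $I = \Sigma_x \cap P$, and $E = \Sigma_x \setminus (R \cup P)$, we have $(\Sigma')_x = I \cup E$.

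The key intermediate step is to show that $I$ is always connected (possibly empty). Since $\Delta_{e_{k-1,k}} = \Delta_{v_{k-1}} \cap \Delta_{v_k}$, the condition $e_{k-1,k} \in \Sigma_x$ is equivalent to requiring both $v_{k-1}, v_k \in \Sigma_x$; combined with the containments $\Delta_{v_{k-1}}, \Delta_{v_k} \supseteq \Delta_{e_{k-1,k}}$, this forces $I$ to be one of $\emptyset$, $\{v_{k-1}\}$, $\{v_k\}$, or all of $P$, each of which is connected as a subcomplex.

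Given connectedness of $I$, any path in $\Sigma_x$ joining cells $c, c' \in (\Sigma')_x$ (which exists by the vertex-connecting hypothesis on $\Sigma$) must enter and leave every maximal excursion into $R'$ through cells of $I$, since in $\Sigma_x$ the set $R'$ is only incident to $R' \cup I$. Replacing each such excursion by a detour within the connected $I$ produces a path entirely in $(\Sigma')_x$. The degenerate case $I = \emptyset$ forces $R' = \emptyset$, for otherwise either $\Sigma_x$ would be disconnected, or else $\Sigma_x \subseteq R$ and $x$ would fail to be a vertex of $\Delta'$.

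I expect the main subtlety to be the connectedness of $I$, and this is where the hypothesis on the connecting path length enters essentially: for fins with longer connecting paths, $\Sigma_x \cap P$ could be a disconnected subpath of $P$, and the rerouting argument would fail. Once connectedness of $I$ is established, the remainder of the argument is formal manipulation of the adjacency structure.
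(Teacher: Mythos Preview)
Your proof is correct and follows essentially the same path-rerouting strategy as the paper's own argument: both show that any sequence in $\Sigma_x$ between cells of $(\Sigma')_x$ which strays into the removed region can be rerouted through the connecting path $P$, the key point being that $x$ must then lie in $\Delta_{e_{k-1,k}}$ (equivalently, your $I$ is connected). Your formulation via the decomposition $\Sigma_x = R' \sqcup I \sqcup E$ and the explicit enumeration of the possibilities for $I$ is a slightly cleaner packaging of the same idea the paper treats in two ad hoc cases.
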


\begin{proof}
Suppose $\sC$ and $\sC'$ are two cells of $\Sigma'$ such that $x \in \Delta_{\sC}\cap \Delta_{\sC'}$. As $\Sigma$ is vertex-connecting, there is a sequence $\sC_* = (\sC_1, \sC_2,\ldots,\sC_{k-1},\sC_k)$ of \textit{distinct} cells in $\Sigma$ such that \begin{itemize}
    \item[-] $\sC = \sC_1$ and $\sC' = \sC_k$,
    \item[-] $\sC_{i}$ is a face of $\sC_{i+1}$ (or vice versa), and 
    \item[-] $x\in \Delta_{\sC_i}$ for each $i=1,\ldots,k$.
\end{itemize}
If $\sC_*$ passes through $\sF$, then  $x\in \Delta_{\sF}$ and therefore $x \in \Delta_{e}$ where $e$ is the connecting edge. So one may modify $\sC_*$ so that it misses $\sF$, as well as its exposed vertices and edges. If $\sC_*$ does not pass through $\sF$ but does pass through an exposed vertex or edge, then it must pass through all exposed vertices and edges, as well as the vertices of the connecting edge $e$. This implies again that $x\in \Delta_{e}$, so we may modify $\sC_{*}$ to be a path entirely in $\Sigma'$. 
\end{proof}

Now suppose $\sQ$ is a matroid, $\sw \in \Dr(\sQ)$, and $\Sigma\subset \TS(\sw)$ is a connected subcomplex.  Given a cell $\sC$ of $\Sigma$, let $\sQ_{\sC}$ be its corresponding matroid.  Suppose $\Sigma$ is vertex-intersecting; this means that there is a basis common to the $\sQ_v$ for $v\in V(\Sigma)$. For simplicity, assume that the common basis is $[r]$.  Define
\begin{equation*}
    B_{\Sigma} = \C[x_{ij} \, : \, [r] \Delta \{i,r+j\} \in \sQ_{v} \text{ for some } v \in V(\Sigma)]
\end{equation*}
Next, define the ideal and multiplicative semigroup 
\begin{equation*}
    I_{\Sigma} = \sum_{v \in V(\Sigma)} I_{\sQ_v} \cdot B_{\Sigma} \hspace{20pt} \text{and} \hspace{20pt} S_{\Sigma} = \langle S_{\sQ_v} \, : \, v \in V(\Sigma) \rangle_{\semigp}; 
\end{equation*}
note that we may view each $S_{\sQ_v}$ as a subset of $B_{\Sigma}$ under the inclusion $B_{\sQ_v} \subset B_{\Sigma}$. Finally, set
\begin{equation}
    R_{\Sigma} = (S_{\Sigma})^{-1} B_{\Sigma} / I_{\Sigma}
\end{equation}

\begin{proposition}
\label{prop:coordRingSigma}
If $\Sigma$ is vertex-intersecting and vertex-connecting, then the coordinate ring of the inverse limit $\varprojlim_{\Sigma} \Gr$ is isomorphic to $R_{\Sigma}$. 
\end{proposition}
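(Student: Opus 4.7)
The plan is to identify $R_\Sigma$ with the colimit $R := \varinjlim_{\Sigma^{\op}} R_{\sQ_\sC}$ in the category of commutative $\C$-algebras; since $\Spec$ converts ring colimits to scheme inverse limits, this yields $\Spec R_\Sigma \cong \varprojlim_\Sigma \Gr$. First I will normalize the affine charts: by vertex-intersection, there is a common lattice point $\epsilon^*_\mu \in \bigcap_{v \in V(\Sigma)} \Delta_v$, i.e., a common basis $\mu$ of all $\sQ_v$, which we take to be $[r]$. Because each cell $\sC$ of $\Sigma$ corresponds to the face $\Delta_\sC = \bigcap_{v \in V(\sC)} \Delta_v$ of $\pQ(\sw)$ under tight-span duality, $[r]$ is also a basis of every $\sQ_\sC$. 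Thus each $R_{\sQ_\sC}$ is presented in its $[r]$-affine form, every transition map in the diagram is $x_{ij} \mapsto x_{ij}$ by Formula \eqref{eq:TSCMapRings}, and $B_{\sQ_\sC} \subset B_{\sQ_v} \subset B_\Sigma$ whenever $v$ is a vertex of $\sC$.

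I will construct mutually inverse ring maps $\beta: R \to R_\Sigma$ and $\alpha: R_\Sigma \to R$. For $\beta$, the inclusions $B_{\sQ_\sC} \hookrightarrow B_\Sigma$ carry $I_{\sQ_\sC}$ into $I_\Sigma$ and $S_{\sQ_\sC}$ into $S_\Sigma$ (for non-vertex $\sC$ one factors through any vertex $v$ of $\sC$ as $R_{\sQ_\sC} \to R_{\sQ_v} \to R_\Sigma$), yielding a compatible cone $R_{\sQ_\sC} \to R_\Sigma$ and hence a unique $\beta$ by the universal property of the colimit. For $\alpha$, I send each generator $x_{ij}$ of $B_\Sigma$ to the image in $R$ of $x_{ij} \in R_{\sQ_v}$, where $v$ is any vertex of $\Sigma$ with $x_{ij} \in B_{\sQ_v}$. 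The defining relations of $R_\Sigma$ are then preserved because each $I_{\sQ_v}$ vanishes in $R_{\sQ_v}$ and each element of $S_{\sQ_v}$ is a unit there, so $\alpha$ descends to the localized quotient. A direct check on generators shows $\alpha \circ \beta = \id_R$ and $\beta \circ \alpha = \id_{R_\Sigma}$.

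The main obstacle, and the only step that requires the vertex-connecting hypothesis, is the well-definedness of $\alpha$. If $v$ and $v'$ are two vertices of $\Sigma$ both with $x_{ij} \in B_{\sQ_v} \cap B_{\sQ_{v'}}$, I must show that the images of $x_{ij} \in R_{\sQ_v}$ and $x_{ij} \in R_{\sQ_{v'}}$ in $R$ agree. By vertex-connecting, the subcomplex of cells $\sC \in \Sigma$ with $\epsilon^*_{[r]\Delta\{i,r+j\}} \in \Delta_\sC$ (equivalently $x_{ij} \in B_{\sQ_\sC}$) is connected, so $v$ and $v'$ are joined by a zigzag of cells in this subcomplex; along each arrow of the zigzag the transition map is $x_{ij} \mapsto x_{ij}$, so the two images in the colimit coincide. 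Without vertex-connecting, different lifts of the same variable could represent distinct elements of $R$ and $\alpha$ would fail to be well-defined.
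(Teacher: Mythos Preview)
Your argument is correct and follows essentially the same route as the paper: both identify the coordinate ring of the limit with the ring colimit $\varinjlim R_{\sQ_\sC}$, build the map $R_\Sigma \to \varinjlim R_{\sQ_\sC}$ on the generators $x_{ij}$, and use the vertex-connecting hypothesis precisely to show this assignment is independent of the chosen cell. Your zigzag phrasing of the well-definedness step is exactly the paper's reduction ``we may reduce to the case where $\sC'$ is a face of $\sC$'', spelled out in full.
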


\begin{proof}
The proof is similar to that of \cite[Proposition~6.5]{CoreySpinor}, so we only provide a sketch. The coordinate ring of $\varprojlim_{\Sigma} \Gr$ is $\varinjlim_{\Sigma} R_{\sQ}$ (indeed, $\Spec$ is  right-adjoint to the global-sections functor, and hence takes direct limits to inverse limits). For each cell $\sC$ of $\Sigma$, the inclusion $B_{\sQ_{\sC}} \subset B_{\Sigma}$ induces a ring morphism $R_{\sQ_{\sC}} \to R_{\Sigma}$. By the universal property, these induce a ring map $\Phi:\varinjlim_{\Sigma} R_{\sQ} \to R_{\Sigma}$. Now let us define the inverse $\Psi: R_{\Sigma} \to \varinjlim_{\Sigma} R_{\sQ}$. Suppose $x_{ij} \in B_{\Sigma}$, and $\sC$ is a cell such that $x_{ij}\in B_{\sQ_{\sC}}$. Set $\Psi(x_{ij}) = \alpha_{\sC}(x_{ij})$ where $\alpha_{\sC}:R_{\sQ_{\sC}} \to \varinjlim_{\Sigma} R_{\sQ}$ is the structure map. Suppose $x_{ij}$ also lies in $B_{\sQ_{\sC'}}$; as $\Sigma$ is vertex-connecting, we may reduce to the case where $\sC'$ is a face of $\sC$. Then $\alpha_{\sC'}(x_{ij}) = (\varphi_{\sQ_{\sC},\sQ_{\sC'}})^{\#} \circ \alpha_{\sC} (x_{ij})$, and so $\Psi:B_{\Sigma} \to \varinjlim_{\Sigma} R_{\sQ}$ is well-defined. All that is left is to show that $\Psi$ passes to the localization and quotient $R_{\Sigma}$, which is a verification we leave to the reader. 
\end{proof}

\subsection{Combinatorial classification of diagrams}
The secondary fan structure of the tropical Grassmannian $\TGr_0(3,8)$ is computed in \cite{BendleBoehmRenSchroter}. It has a $7$-dimensional lineality space, and its $f$-vector is
{\small
\begin{equation*}
    f(\TGr_0(3,8)) = (f_{7},\ldots,f_{15}) \equiv (1, 12, 155, 1149, 5013, 12\,736, 18\,798, 14\,714, 4766) \mod \Sn{8}. 
\end{equation*}}

\noindent A \textit{combinatorial type} is a $\Sn{8}$--orbit of a cone in $\pS_{\trop}(3,8)$, and we typically record a combinatorial type by a vector $\sw$ in the relative interior of a cone in an $\Sn{8}$-orbit. There are $57\, 344$ combinatorial types of $\pS_{\trop}(3,8)$. Consider the following conditions (see \S \ref{sec:removeLeaf} for the definitions of $\Sigma_{\sL}$ and $\Sigma_{\mathsf{Br}}$):
\begin{enumerate}
 \setlength\itemsep{.3em}
    \item The tight-span $\TS(\sw)$ is vertex-intersecting.  
    \item The dual graph $\Gamma(\sw)$ is a tree. 
    \item The subcomplex $\Sigma_{\sL}$ of $\TS(\sw)$  is vertex-intersecting.  
    \item The subcomplex $\Sigma_{\mathsf{Br}}$ of $\TS(\sw)$ is vertex-intersecting.     
    \item The subcomplex   of $\Sigma_{\sL}$ obtained by removing all fins of $\Sigma_{\sL}$ whose connecting path (see \S \ref{sec:limitsGraphsTS}) has length 1 is vertex-intersecting.  
    \item The subcomplex of  $\Sigma_{\sL}$ obtained by removing all fins of $\Sigma_{\sL}$ (of any connecting path length) is a tree.
\end{enumerate}

\noindent While these conditions are not mutually exclusive, we can still use this to separate all the subdivisions into 6 sets: set $\sG_j$ consists of those subdivisions that satisfy condition ($j$) but not ($i$) for $i< j$. 

\subsection{Case $\sG_1$} There are $13\,641$ subdivisions belonging to the set $\sG_1$.

\begin{proposition}
\label{prop:G1}
If $\sw \in \sG_1$, then $\Gr(\sw)$ is smooth and irreducible of dimension 15. 
\end{proposition}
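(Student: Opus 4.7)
The plan is to apply Proposition \ref{prop:coordRingSigma} with $\Sigma = \TS(\sw)$. Vertex-intersectingness is the defining property of $\sG_1$, supplying a basis $\mu$ common to all vertex matroids $\sQ_v$; after an $\Sn{8}$-permutation we assume $\mu = [3]$. Vertex-connectedness is automatic for tight-spans of regular subdivisions: for each vertex $x$ of $\Delta(3,8)$, the cells of $\pQ(\sw)$ containing $x$ form a connected star, whose dual in $\TS(\sw)$ is connected. Because every vertex of $\Delta(3,8)$ lies in some maximal cell of $\pQ(\sw)$, every $\lambda \in \binom{[8]}{3}$ is a basis of some $\sQ_v$; hence every variable $x_{ij}$ of \eqref{eq:matrixA} appears in some $B_{\sQ_v}^\mu$, and $B_{\TS(\sw)}$ is the full $15$-variable polynomial ring $\C[x_{ij} : i \in [3], j \in [5]]$. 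Proposition \ref{prop:coordRingSigma} then gives
\[
\Gr(\sw) \cong \Spec\!\bigl(S_{\TS(\sw)}^{-1} B_{\TS(\sw)}/I_{\TS(\sw)}\bigr).
\]

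The claim of the proposition --- that $\Gr(\sw)$ is smooth, irreducible, and of dimension $15$ --- is then equivalent to $I_{\TS(\sw)}$ vanishing in $B_{\TS(\sw)}$, since $B_{\TS(\sw)}$ is a $15$-variable polynomial domain and any nontrivial ideal would force $\dim R_{\TS(\sw)} \leq 14$, contradicting Corollary \ref{cor:limit2Init}. The generators of $I_{\TS(\sw)}$ are the elements $\pi_{\sQ_v}(\det A_\lambda)$ for $\lambda$ a nonbasis of some $\sQ_v$. When $|\lambda \cap [3]| = 2$, the minor $\det A_\lambda$ is a single variable $\pm x_{c,k}$ with $[3]\,\Delta\,\{c, 3+k\} = \lambda \notin \sQ_v$, so $\pi_{\sQ_v}(x_{c,k}) = 0$ by construction and the generator is automatically zero. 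The remaining generators, coming from nonbases $\lambda$ with $|\lambda \cap [3]| \leq 1$, are $2\times 2$ or $3\times 3$ Pl\"ucker-type minors of \eqref{eq:matrixA} whose vanishing depends on the specific combinatorics of $\sQ_v$.

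For each of the $13\,641$ types in $\sG_1$, the OSCAR/polymake code constructs $\pQ(\sw)$, enumerates the maximal cell matroids $\sQ_v$, assembles the generators of $I_{\TS(\sw)}$ and the multiplicative set $S_{\TS(\sw)}$, and verifies directly that every Pl\"ucker-style generator vanishes in $B_{\TS(\sw)}$; the smoothness, irreducibility, and $15$-dimensionality of $\Gr(\sw)$ then follow from its identification with an open subscheme of $\A^{15}$. The main obstacle is that individual vertex matroids $\sQ_v$ need not be $(B,\mu)$-maximal, so Proposition \ref{prop:conditionIdealTrivial} does not apply directly and one really does require a genuine per-case verification of the vanishings for the $|\lambda \cap [3]| \leq 1$ generators. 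The vertex-intersecting condition of $\sG_1$ is what keeps the entire computation inside a fixed $15$-variable polynomial ring across all types, making the automated verification uniform and tractable.
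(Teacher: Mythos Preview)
Your first two paragraphs contain the paper's proof almost verbatim (it is packaged there as Proposition~\ref{prop:cvp}): realize $\Gr(\sw)$ as $\Spec(S_{\Sigma}^{-1}B_{\Sigma}/I_{\Sigma})$ with $B_{\Sigma}$ the $15$-variable polynomial ring, observe this forces $\dim\Gr(\sw)\le 15$, and combine with the lower bound $\dim\Gr(\sw)\ge 15$ coming from the closed immersion $\init_{\sw}\Gr_0(3,8)\hookrightarrow\Gr(\sw)$ to conclude $I_{\Sigma}=0$ in $S_{\Sigma}^{-1}B_{\Sigma}$. One small correction: the lower bound comes from Theorem~\ref{thm:closedImmersion}, not Corollary~\ref{cor:limit2Init} (the latter runs in the opposite direction).

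The real issue is your third paragraph. You already finished the proof in the second paragraph: once you know $\dim\Gr(\sw)=15$ and $\Gr(\sw)$ sits as a locally closed subscheme of $\A^{15}$, the ideal is forced to vanish and $\Gr(\sw)$ is open in $\A^{15}$---no generator-by-generator check is required. The claim that ``one really does require a genuine per-case verification of the vanishings for the $|\lambda\cap[3]|\le 1$ generators'' is wrong, and the paper does \emph{not} invoke any computation for $\sG_1$; this case is handled entirely by the conceptual dimension argument. The OSCAR/polymake code is needed only for the later groups $\sG_3$--$\sG_6$, where the relevant subcomplex is no longer all of $\TS(\sw)$ and the ambient polynomial ring has fewer than $15$ variables, so the dimension count alone does not kill the ideal. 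Delete the third paragraph and you have exactly the paper's argument.
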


\noindent This is a direct consequence of the following proposition.

\begin{proposition}
\label{prop:cvp}
Suppose $\sw \in \TGr_0(r,n)$ and $\TS(\sw)$ is basis-intersecting. Then $\Gr(\sw)$ is isomorphic to an open subvariety of $\A^{r(n-r)}$. In particular, $\init_{\sw}\Gr_0(r,n)$ and  $\Gr(\sw)$ are smooth and irreducible of dimension $r(n-r)$.
\end{proposition}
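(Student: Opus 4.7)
The plan is to identify the coordinate ring of $\Gr(\sw)$ with a localization of a polynomial ring in $r(n-r)$ variables by applying Proposition \ref{prop:coordRingSigma} to $\Sigma = \TS(\sw)$, and then to invoke a dimension argument based on Theorem \ref{thm:closedImmersion}. The statement for $\init_{\sw}\Gr_0(r,n)$ then follows from Corollary \ref{cor:limit2Init}.

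Without loss of generality, we may assume the common basis guaranteed by the basis-intersecting hypothesis is $\mu = [r]$. To apply Proposition \ref{prop:coordRingSigma}, we also need that $\TS(\sw)$ is vertex-connecting: for any vertex $\epsilon^*_{\lambda}$ of $\Delta(r,n)$, the cells of $\pQ(\sw)$ containing $\epsilon^*_{\lambda}$ form a star, which is contractible, so the dual subcomplex of $\TS(\sw)$ is connected. Proposition \ref{prop:coordRingSigma} then identifies the coordinate ring of $\Gr(\sw)$ with $R_{\TS(\sw)} = S_{\TS(\sw)}^{-1} B_{\TS(\sw)} / I_{\TS(\sw)}$. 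Moreover $B_{\TS(\sw)}$ is the full polynomial ring $B$ in all $r(n-r)$ variables: since $\pQ(\sw)$ is a subdivision of $\Delta(r,n)$, every vertex $\epsilon^*_{[r]\Delta\{i,r+j\}}$ of $\Delta(r,n)$ lies in some maximal cell $\Delta(\sQ_v)$, forcing $[r]\Delta\{i,r+j\} \in \sQ_v$ and hence $x_{ij} \in B_{\sQ_v} \subseteq B_{\TS(\sw)}$.

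The main step is to show that $I_{\TS(\sw)}$ vanishes in the localization $S_{\TS(\sw)}^{-1} B$. By Theorem \ref{thm:closedImmersion}, there is a closed immersion $\init_{\sw} \Gr_0(r,n) \hookrightarrow \Gr(\sw)$, and since Gr\"obner degenerations preserve Krull dimension, $\dim \init_{\sw} \Gr_0(r,n) = \dim \Gr_0(r,n) = r(n-r)$, forcing $\dim \Gr(\sw) \geq r(n-r)$. On the other hand, $\Gr(\sw)$ is a quotient of a localization of $B$, so $\dim \Gr(\sw) \leq r(n-r)$. The ring $S_{\TS(\sw)}^{-1}B$ is an integral domain of Krull dimension exactly $r(n-r)$, and quotienting it by any nonzero ideal would strictly decrease the dimension; equality of dimensions therefore forces $I_{\TS(\sw)} \cdot S_{\TS(\sw)}^{-1}B = 0$. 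Consequently $\Gr(\sw) \cong \Spec(S_{\TS(\sw)}^{-1}B)$ is an open subvariety of $\A^{r(n-r)}$, smooth and irreducible of dimension $r(n-r)$, and Corollary \ref{cor:limit2Init} transfers this conclusion to $\init_{\sw}\Gr_0(r,n)$.

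The key subtlety is that for an individual vertex matroid $\sQ_v$ the ideal $I_{\sQ_v}$ is typically nontrivial, so one cannot hope to verify $I_{\TS(\sw)} = 0$ directly by computing on $B$. The hypothesis $\sw \in \TGr_0(r,n)$ enters only through Theorem \ref{thm:closedImmersion}, which supplies the dimension lower bound that, combined with the automatic upper bound for a quotient of a localization, forces the combined ideal to be trivial after inverting $S_{\TS(\sw)}$.
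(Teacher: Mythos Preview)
Your proof is correct and follows essentially the same approach as the paper's: both apply Proposition~\ref{prop:coordRingSigma} to realize $\Gr(\sw)$ as a locally closed subscheme of $\A^{r(n-r)}$, then use the dimension lower bound from Theorem~\ref{thm:closedImmersion} to force it to be open, and finish with Corollary~\ref{cor:limit2Init}. You actually spell out two steps the paper leaves implicit: that $B_{\TS(\sw)}$ is the full polynomial ring $B$, and that the equality of dimensions forces $I_{\TS(\sw)}$ to vanish in $S_{\TS(\sw)}^{-1}B$ (the paper jumps directly from ``dimension $r(n-r)$'' to ``open subvariety''). One minor point: your justification of vertex-connectedness (``cells containing $\epsilon_\lambda^*$ form a star, which is contractible'') is a bit quick, since contractibility of the star does not immediately give connectedness of the dual subcomplex of $\TS(\sw)$, which only records interior cells; the paper simply cites \cite[Proposition~C.11]{CoreyGrassmannians} for this, and you could do the same.
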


\begin{proof}
The tight-span $\TS(\sw)$ is basis-connecting by \cite[Proposition~C.11]{CoreyGrassmannians} (Appendix by Cueto).
By Proposition \ref{prop:coordRingSigma}, we have that $\Gr(\sw)$ is a locally-closed subscheme of $\A^{r(n-r)}$, in particular $\dim \Gr(\sw) \leq r(n-r)$. By Theorem \ref{thm:closedImmersion}, we have that $\dim \Gr(\sw) \geq r(n-r)$ as the dimension of $\init_{\sw}\Gr_0(r,n)$ is $r(n-r)$.  Therefore, $\dim \Gr(\sw) = r(n-r)$, and so it is isomorphic to an open subvariety of $\A^{r(n-r)}$. The last statement in the proposition follows from Corollary \ref{cor:limit2Init}. 
\end{proof}

\subsection{Case $\sG_2$} There are $215$ subdivisions belonging to the set $\sG_2$.

\begin{proposition}
\label{prop:allButOneGr}
Suppose $\Gamma \subset \Gamma(\sw)$ is a tree, and $\sQ_v$ is $B$--maximal for all $v\in V(\Gamma)$  except (possibly) one, say $\sQ_{v_0}$. If $\Gr(\sQ_{v_0})$ is smooth and irreducible, then so is $\varprojlim_{\Gamma} \Gr$.
\end{proposition}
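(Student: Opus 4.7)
The plan is to reduce this directly to Proposition \ref{prop:limitTreeAllButOne} by using $B$-maximality to supply both the smoothness/irreducibility of the vertex schemes and the SDC property of the structure maps at every vertex other than $v_0$.

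First, I would observe that for each vertex $v \in V(\Gamma)$ with $v \neq v_0$, the matroid $\sQ_v$ is $B$-maximal, so by Proposition \ref{prop:conditionIdealTrivial}(1) the thin Schubert cell $\Gr(\sQ_v)$ is an open subscheme of a torus, hence smooth and irreducible. Together with the hypothesis that $\Gr(\sQ_{v_0})$ is smooth and irreducible, this gives the smoothness and irreducibility of $Z(v) = \Gr(\sQ_v)$ for every $v \in V(\Gamma)$.

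Next, I would handle the edge maps. For each edge $e \in E(\Gamma)$ incident to a vertex $v \neq v_0$, the matroid $\sQ_e$ corresponds to a face of $\Delta(\sQ_v)$, so $\sQ_e \leq \sQ_v$ in the face order. Since $\sQ_v$ is $B$-maximal, Proposition \ref{prop:conditionIdealTrivial}(2) applied to this face relation implies that $\varphi_{\sQ_v,\sQ_e} : \Gr(\sQ_v) \to \Gr(\sQ_e)$ is an SDC-morphism. Thus the SDC hypothesis of Proposition \ref{prop:limitTreeAllButOne} holds at every vertex except (possibly) $v_0$, which is exactly what that proposition allows.

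Finally, applying Proposition \ref{prop:limitTreeAllButOne} to the tree $\Gamma$ and the diagram $Z = \Gr$ yields that $\varprojlim_{\Gamma} \Gr$ is smooth and irreducible, with dimension given by \eqref{eq:dimTree}. There is no real obstacle here: the entire argument is a straightforward unpacking of definitions, and the only technical content is recognizing that $B$-maximality of $\sQ_v$ automatically upgrades every outgoing structure map from $\Gr(\sQ_v)$ to an SDC-morphism via Proposition \ref{prop:conditionIdealTrivial}(2), so the hypotheses of Proposition \ref{prop:limitTreeAllButOne} are satisfied without any further verification.
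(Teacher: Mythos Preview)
Your proposal is correct and takes essentially the same approach as the paper: both verify the hypotheses of Proposition~\ref{prop:limitTreeAllButOne} by invoking Proposition~\ref{prop:conditionIdealTrivial}, with $B$-maximality at each $v\neq v_0$ supplying smoothness and irreducibility of $\Gr(\sQ_v)$ via part~(1) and the SDC property of $\varphi_{\sQ_v,\sQ_e}$ via part~(2). The paper's proof is simply a one-sentence citation of these two propositions, whereas you spell out the two verifications separately; there is no substantive difference.
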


\begin{proof}
The diagram $\Gr:\Gamma \to \Csch$ satisfies the hypotheses of Proposition \ref{prop:limitTreeAllButOne} by Proposition \ref{prop:conditionIdealTrivial}, and so the limit  $\varprojlim_{\Gamma} \Gr$ is smooth and irreducible. 
\end{proof}

\begin{proposition}
\label{prop:G2}
If $\sw$ represents a combinatorial type in $\sG_2$ other than $\spCone$, then $\Gr(\sw)$ is smooth and irreducible of dimension 15. 
\end{proposition}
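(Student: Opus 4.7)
The plan is to combine Propositions~\ref{prop:limitTSGamma} and~\ref{prop:allButOneGr}. Since $\sw$ represents a type in $\sG_2$, the dual graph $\Gamma(\sw)$ is a tree, so Proposition~\ref{prop:limitTSGamma} yields $\Gr(\sw) \cong \varprojlim_{\Gamma(\sw)} \Gr$. To apply Proposition~\ref{prop:allButOneGr} to this tree diagram, it suffices to verify that (a) at most one vertex matroid $\sQ_v$ of $\Gamma(\sw)$ fails to be $B$-maximal, and (b) if such an exceptional vertex $v_0$ exists, then $\Gr(\sQ_{v_0})$ is smooth and irreducible.

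Part (b) is automatic from the hypothesis $\sw \notin \Sn{8}\cdot \spCone$. Indeed, the vertex matroids of $\Gamma(\sw)$ are precisely the matroids of the maximal cells of the matroidal subdivision $\pQ(\sw)$, and by the hypothesis none is isomorphic to $\spMat$ (otherwise $\sw$ would lie in the relative interior of a face of some cone in the $\Sn{8}$-orbit of $\spCone$). By Proposition~\ref{prop:smoothIrredTSC}, the thin Schubert cell of every $\C$-realizable $(3,8)$-matroid other than $\spMat$ is smooth and irreducible, so (b) holds at any candidate exceptional vertex.

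Part (a) is the content requiring explicit verification, performed case by case. For each of the $215$ combinatorial types in $\sG_2$ outside $\Sn{8}\cdot\spCone$, one enumerates the vertex matroids $\sQ_v$ of $\Gamma(\sw)$ and tests, for each $\sQ_v$ and each basis $\mu \in \sQ_v$, whether $d(\sQ_v,\mu) = \dim \Gr(\sQ_v)$. The main obstacle is precisely that $B$-maximality of all-but-one vertex matroid is not a formal consequence of the defining conditions of $\sG_2$, so its prevalence must be checked by software using the github repository listed in the introduction.

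Once (a) and (b) are established, Proposition~\ref{prop:allButOneGr} produces smoothness and irreducibility of $\Gr(\sw)$, and the dimension formula~\eqref{eq:dimTree} of Proposition~\ref{prop:limitTreeAllButOne} computes $\dim \Gr(\sw)$: at $B$-maximal vertices and edges one substitutes $d(\sQ_v,\mu)$ and $d(\sQ_e,\mu)$ by Proposition~\ref{prop:conditionIdealTrivial}, while at the exceptional vertex (if any) one uses $\dim \Gr(\sQ_{v_0})$ extracted from the coordinate ring of \S\ref{sec:affineCoordinates}. A final software check confirms that the resulting alternating sum equals $15$ in every case, completing the proof.
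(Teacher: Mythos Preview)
Your strategy coincides with the paper's for most of the cases, but there is a genuine gap. The paper reports that only $210$ of the $214$ non-$\spCone$ types in $\sG_2$ satisfy your condition (a): for the remaining $4$ types, \emph{more than one} vertex matroid fails to be $B$-maximal, so Proposition~\ref{prop:allButOneGr} does not apply. For those $4$ types the paper uses a different argument: it checks that all but at most one vertex matroid has a pair of parallel elements, and then invokes \cite[Proposition~5.4]{CoreyGrassmannians} together with \cite[Lemma~C.5]{CoreyGrassmannians} to conclude directly that the relevant face maps are SDC-morphisms, after which Proposition~\ref{prop:limitTreeAllButOne} applies. Your proposal, as written, would simply halt at these $4$ cases once the software check on (a) fails.

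Two smaller points. First, there are $215$ types in $\sG_2$ total, hence $214$ once $\spCone$ is removed, not $215$. Second, your justification of (b) is not quite right: if $\spMat$ appeared as a maximal cell of $\pQ(\sw)$, then $\pQ(\sw)$ would \emph{refine} $\pQ(\spw)$, meaning $\spCone$ would be a \emph{face} of the cone containing $\sw$, not the other way around. The conclusion you want still holds, but for a different reason: $\spCone$ is a maximal cone of $\pS_{\trop}(3,8)$ (it has dimension $8+7=15$), so no cone of $\pS_{\trop}(3,8)$ properly contains it, and therefore $\pQ(\spw)$ is the unique subdivision (up to $\Sn{8}$) in which $\spMat$ occurs as a maximal cell.
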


\begin{proof}
    There are 210 combinatorial types $\sw$ so that $\Gamma=\Gamma(\sw)$ satisfies the hypothesis of Proposition \ref{prop:allButOneGr}, and so $\Gr(\sw)$ is smooth and irreducible in these cases. For the 4 remaining cases, all $\sQ_{v}$ have 2 or more parallel elements for all but at most one $v\in V(\Gamma)$. These diagrams satisfy the conditions of Proposition \ref{prop:limitTreeAllButOne} by \cite[Proposition~5.4]{CoreyGrassmannians} and [\textit{loc. cit.} Lemma C.5] (Appendix by Cueto). Finally, we compute the dimensions of each limit $\Gr(\sw)$ using Formula \ref{eq:dimTree}, and verify that each dimension is $15$. 
\end{proof}

\begin{example}
Here we use an explicit example as a demonstration of Proposition \ref{prop:allButOneGr}.
Let $\sw \in \TGr_{0}(3,8)$ be
\begin{equation*}
    \sw = \se_{126} + \se_{234} + \se_{237} + 2\, \se_{238} + \se_{247} + \se_{248} + \se_{278} + \se_{347} + \se_{348} + \se_{378} + 2\, \se_{478} + \se_{568}
\end{equation*}
The tight-span $\TS(\sw)$ of the subdivision $\pQ(\sw)$  is the tree given in Figure \ref{fig:tree_and_matroids}.

\begin{figure}[h]
    \centering
  \includegraphics[height = 2.5cm]{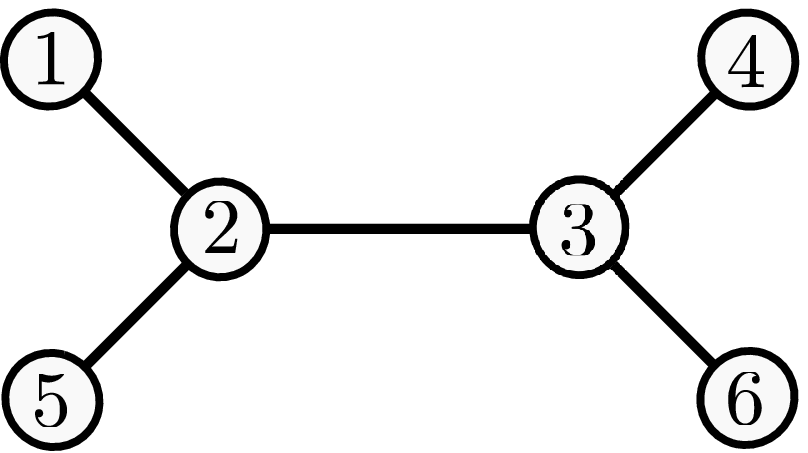}\space\space\space\space\space\space\space\includegraphics[height = 2.75cm]{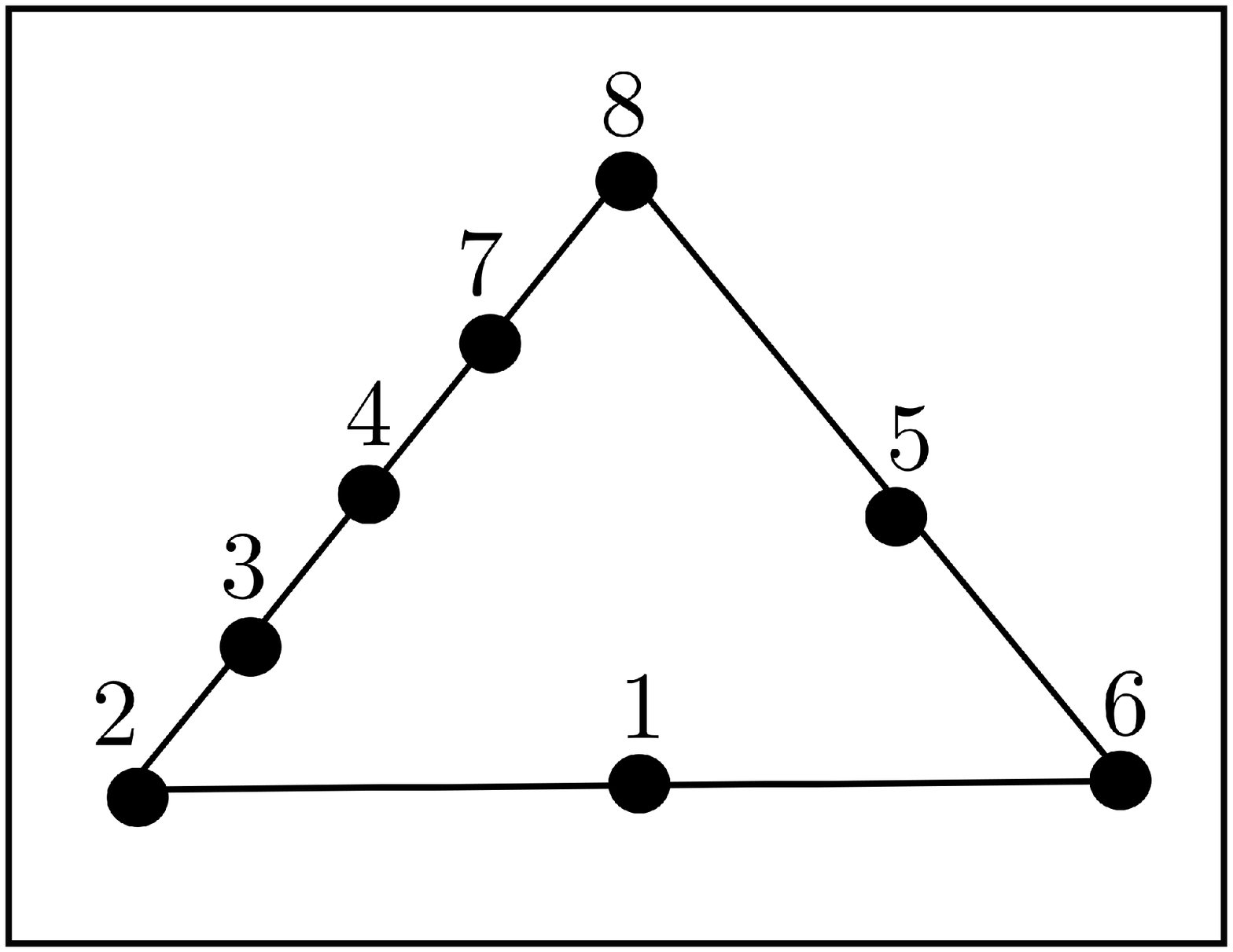}\space\space\includegraphics[height = 2.75cm]{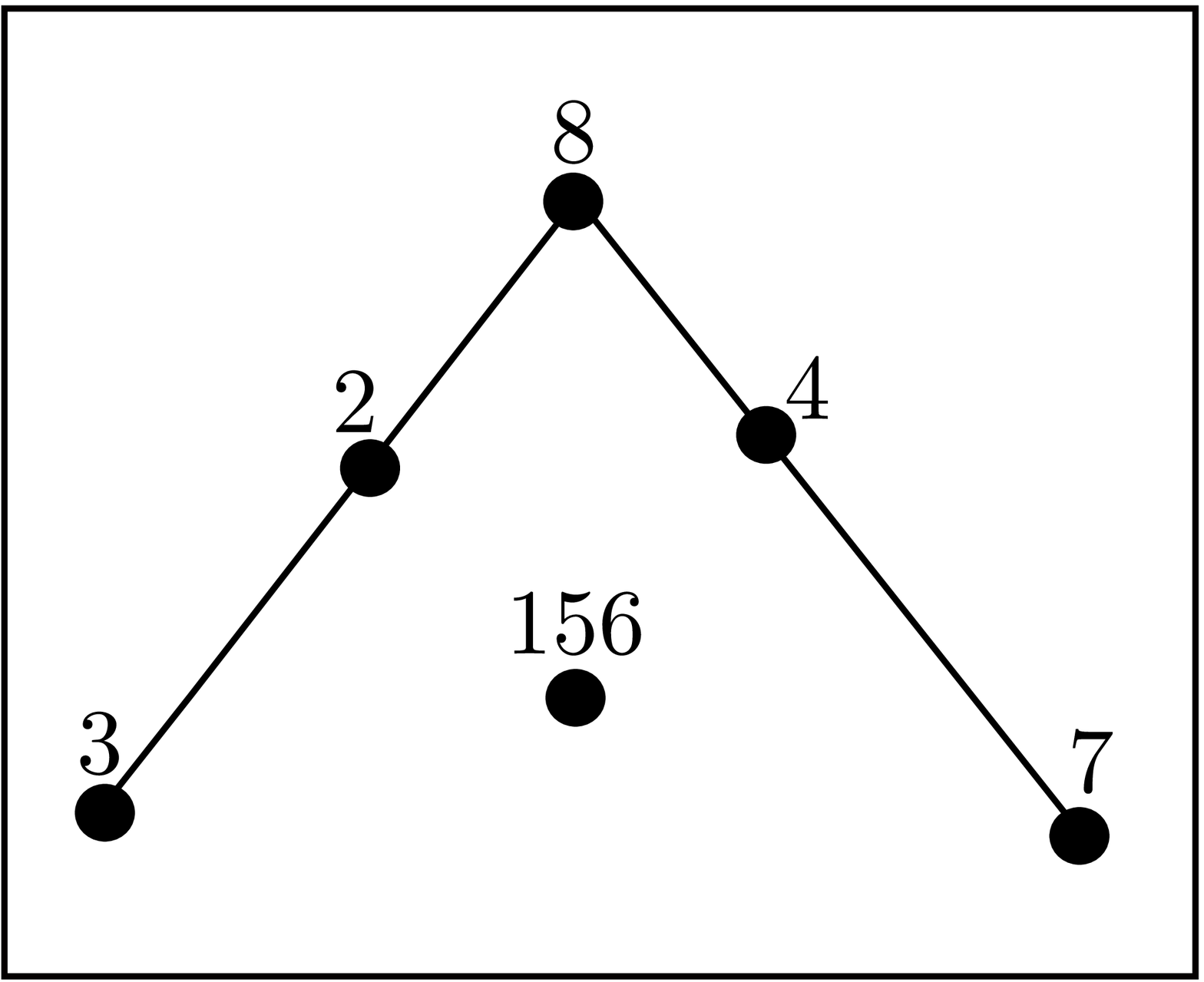}\space\space\includegraphics[height = 2.75cm]{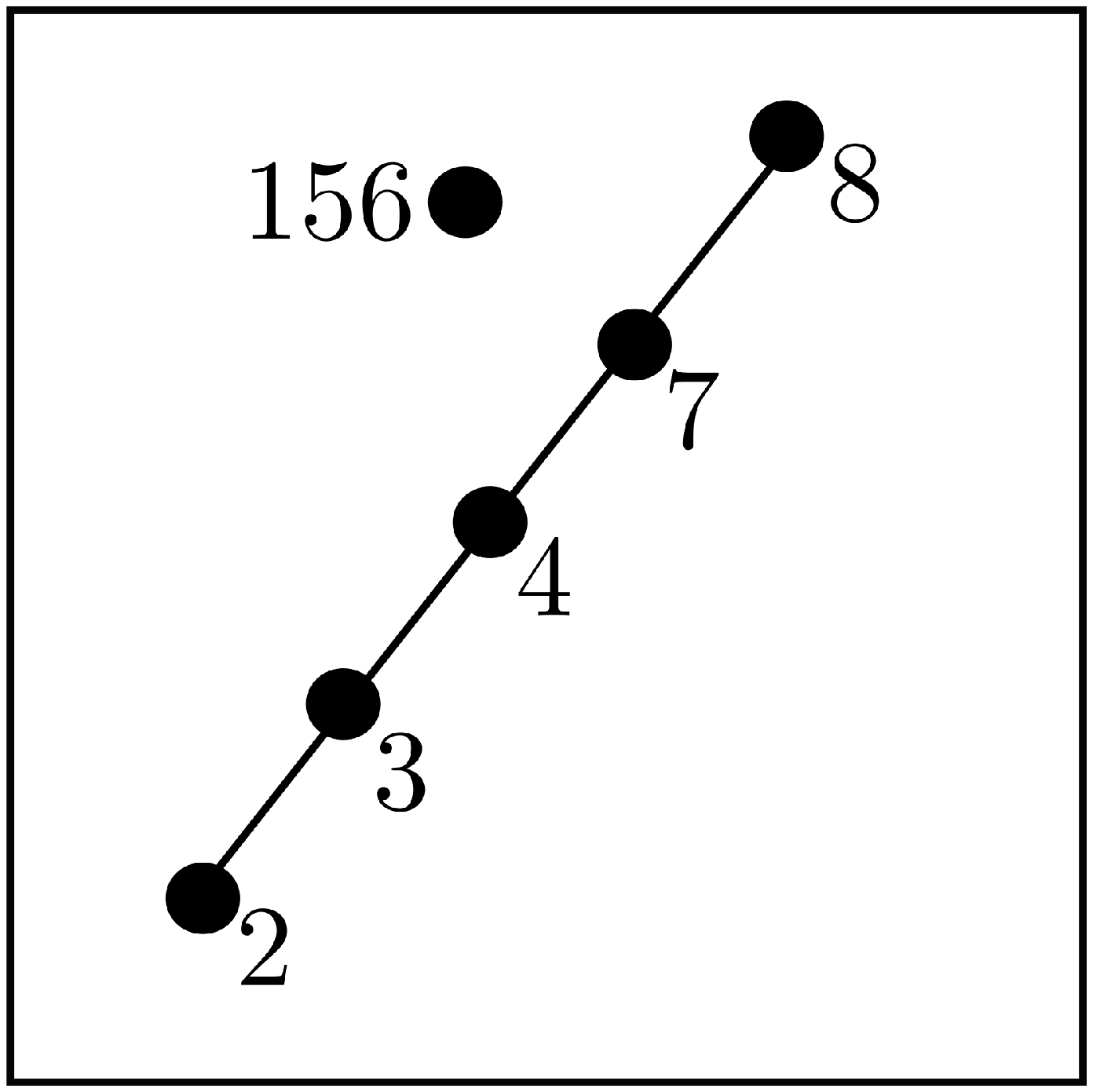}
    \caption{Dual graph of $\pQ(\sw)$ and the matroids corresponding to $\sQ_2,\sQ_3$ and $\sQ_{2,3}$}
    \label{fig:tree_and_matroids}
\end{figure}
\noindent The matroids $\sQ_2$ and $\sQ_3$ are illustrated in the middle of Figure \ref{fig:tree_and_matroids}. The remaining $\sQ_i$ are
\begin{equation*}
    \sQ_1 = \sU(34578,1,2,6), \; \sQ_4 = \sU(14567,2,3,8), \; \sQ_5 = \sU(12347,5,6,8),\;  \sQ_6 = \sU(12356,4,7,8).
\end{equation*}
Denote by $\sQ_{i,j}$ the matroid of the edge between the vertices corresponding to $\sQ_{i}$ and $\sQ_j$. The matroid $\sQ_{2,3}$ is illustrated by the right picture in Figure \ref{fig:tree_and_matroids}, and the remaining matroids  $\sQ_{i,j}$ for $(i,j) \neq (2,3)$ are isomorphic to $\sU'(a,b,c;[8]\setminus abc)$.  The dimensions of the thin Schubert cells $\Gr(\sQ_i)$ and $\Gr(\sQ_{i,j})$ are
{\footnotesize\begin{equation*}
    \dim \Gr(\sQ_i) = 
\begin{cases}
7 &\text{ if }i\in\{1,4,5,6\}\\
10 &\text{ if } i = 2\\
9 &\text{ if } i = 3\\
\end{cases} 
\hspace{20pt}
\dim \Gr(\sQ_{i,j}) = 
\begin{cases}
6 &\text{ if }(i,j)\in\{(1,2),(2,5),(3,4),(3,6)\}\\
8 &\text{ if } (i,j) = (2,3)\\
\end{cases}
\end{equation*}}
With $\mu_1 = \{2,6,8\}$ and $\mu_2 = \{1,4,7\}$, the matroids  $\sQ_{1}, \sQ_{2}, \sQ_{4}, \sQ_{5}$ are $(B,\mu_1)$-maximal and $\sQ_6$ is $(B,\mu_2)$-maximal, but $\sQ_3$ is not $B$-maximal. As $\sQ_3$ is not isomorphic to $\spMat$, we see that $\Gr(\sQ_3)$ is smooth and irreducible. Therefore, $\Gr(\sw)$ is smooth and irreducible of dimension 15 by Proposition \ref{prop:limitTreeAllButOne}, and so $\init_{\sw}\Gr_0(3,8)$ is smooth and irreducible by Corollary \ref{cor:limit2Init}. 
\end{example}

\subsection{Case $\sG_3$} There are $28\,227$ subdivisions belonging to the set $\sG_3$. Given such a combinatorial type $\sw$, let $\Sigma = \TS(\sw)$,  and $\Sigma_{\sL} \subset \Sigma$ the subcomplex obtained by removing all leaf-vertices of $\Sigma$ together with their adjacent edges. Set
\begin{equation}
\label{eq:leavesVE}
    Y_{\Leaf} = \prod_{\substack{v \text{ leaf} \\ \text{vertex}}} \Gr(\sQ_v) \hspace{20pt} Y_{\Leaf}' = \prod_{\substack{e \text{ leaf} \\ \text{edge}}} \Gr(\sQ_e) 
\end{equation}
Then
\begin{equation}
\label{eq:deleaf}
    \Gr(\sw) \; \cong \; \varprojlim_{\Sigma} \Gr \; \cong \; \varprojlim_{\Sigma_{\sL}} \Gr \times_{Y_{\Leaf}'} Y_{\Leaf}.
\end{equation}

\begin{proposition}
\label{prop:limitcvpLeaf}
If  $\varprojlim_{\Sigma_{\sL}} \Gr $ is smooth and irreducible, then $\Gr(\sw)$ is smooth and irreducible of dimension
\begin{equation}
\label{eq:dimLimitCVPLeaf}
    \dim \Gr(\sw) = \dim \varprojlim_{\Sigma_{\sL}} \Gr \; - \; \sum_{\substack{e \text{ leaf} \\ \text{edge}}} \dim \Gr(\sQ_e) \; + \; \sum_{\substack{v \text{ leaf} \\ \text{vertex}}} \dim \Gr(\sQ_v)
\end{equation}
\end{proposition}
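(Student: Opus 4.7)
The plan is to apply Proposition \ref{prop:removeLeavesLim} to the diagram $\Gr : \TS(\sw) \to \Csch$, since the role of $\Sigma_{\sL}$ in that proposition matches its role here. Given the smoothness and irreducibility of $\varprojlim_{\Sigma_{\sL}} \Gr$ assumed in the hypothesis, the conclusion of Proposition \ref{prop:removeLeavesLim}---namely smoothness, irreducibility, and the dimension formula \eqref{eq:dimDeLeaf}, which coincides with \eqref{eq:dimLimitCVPLeaf}---will follow immediately once we verify the remaining hypothesis: for each leaf pair $(v,e)$ of $\TS(\sw)$, the morphism $\varphi_{\sQ_v,\sQ_e}: \Gr(\sQ_v) \to \Gr(\sQ_e)$ is an SDC-morphism.

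Equivalently, one can argue directly from Formula \eqref{eq:deleaf}: if each leaf morphism $\Gr(\sQ_v) \to \Gr(\sQ_e)$ is SDC, then the product morphism $Y_{\Leaf} \to Y_{\Leaf}'$ is SDC, and Proposition \ref{prop:smoothIrreducibleFiberProduct}(1) applied to the fiber product $\varprojlim_{\Sigma_{\sL}} \Gr \times_{Y_{\Leaf}'} Y_{\Leaf}$ delivers smoothness and irreducibility of $\Gr(\sw)$, together with the dimension identity
\begin{equation*}
    \dim \Gr(\sw) = \dim \varprojlim_{\Sigma_{\sL}} \Gr + \dim Y_{\Leaf} - \dim Y_{\Leaf}',
\end{equation*}
which unpacks to \eqref{eq:dimLimitCVPLeaf} since $Y_{\Leaf}$ and $Y_{\Leaf}'$ are products of the relevant thin Schubert cells.

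The main obstacle is therefore to establish the SDC-property for each leaf morphism. I would exploit the combinatorics: since $v$ is a leaf of $\TS(\sw)$, the matroid polytope $\Delta(\sQ_v)$ is a maximal cell of $\pQ(\sw)$ meeting the rest of the subdivision along a single codimension-1 internal facet $\Delta(\sQ_e)$, so $\sQ_e \leq \sQ_v$ is internal. Leaf matroids in subdivisions of $\Delta(3,8)$ are heavily constrained: typically $\sQ_v$ has at most 2 lines, in which case Proposition \ref{prop:oneInternalFacet} applies, or else $\sQ_v$ takes the form of one of the model matroids $\sU, \sU', \sV, \sW$ in Figure \ref{fig:matroidsUVW}, in which case Proposition \ref{prop:TSCUVW} applies. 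Either way, the requisite SDC-property holds.

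The hardest part will be handling the full catalogue of leaf matroids that actually appear in the $28\,227$ combinatorial types in $\sG_3$, since the claim must hold uniformly. In practice this enumeration is carried out by the accompanying \texttt{julia}/OSCAR code, which for each combinatorial type identifies the leaves of $\TS(\sw)$ and checks the SDC-property via Proposition \ref{prop:oneInternalFacet} or Proposition \ref{prop:TSCUVW}, after which the present proposition reduces the remaining analysis to the inverse limit over $\Sigma_{\sL}$.
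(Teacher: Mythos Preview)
Your approach matches the paper's: show that each leaf morphism $\Gr(\sQ_v)\to\Gr(\sQ_e)$ is an SDC-morphism and then apply Proposition~\ref{prop:removeLeavesLim} (the paper phrases the final step via $Y_{\Leaf}\to Y_{\Leaf}'$ and Proposition~\ref{prop:limitTreeAllButOne}, which amounts to the same thing).

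The case split and appeal to computation in your last two paragraphs are, however, unnecessary. If $v$ is a leaf of $\TS(\sw)$, then $\Delta(\sQ_v)$ is a maximal cell with exactly one internal codimension-$1$ face. For a connected $(3,n)$-matroid $\sQ_v$, every line $\lambda$ is a nondegenerate subset with $3\le|\lambda|\le n-2$ and hence determines a distinct \emph{internal} facet of $\Delta(\sQ_v)$; consequently the number of lines of $\sQ_v$ is bounded above by its number of internal facets, namely~$1$. So every leaf matroid has at most one line, and Proposition~\ref{prop:oneInternalFacet} applies uniformly---no enumeration, no model matroids $\sU,\sV,\sW$, and no code are needed. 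The paper's proof is accordingly one sentence citing Proposition~\ref{prop:oneInternalFacet} for the SDC-property, followed by the conclusion.
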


\begin{proof}
For each leaf-pair $(v,e)$, the morphism $\Gr(\sQ_v) \to \Gr(\sQ_e)$ is smooth and dominant with connected fibers by Proposition \ref{prop:oneInternalFacet}, and therefore  $Y_{\Leaf}\to Y_{\Leaf}'$ also an SDC-morphism. The proposition now follows from Proposition \ref{prop:limitTreeAllButOne}. 
\end{proof}

\begin{proposition}
\label{prop:deLeafBasisConnecting}
For any combinatorial type $\sw$, the subcomplex $\Sigma_{\sL}$ is vertex-connecting.
\end{proposition}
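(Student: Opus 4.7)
The plan is to pass from $\TS(\sw)$ to $\Sigma_\sL$ by iteratively removing leaf-pairs, applying Proposition \ref{prop:vertexConnectingLeaf} at each step to propagate vertex-connectedness. This requires two ingredients: the vertex-connectedness of the full tight-span $\TS(\sw)$ itself, and the combinatorial claim that leaf-pairs may be removed in sequence without interfering with one another.

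For the first ingredient, I would invoke \cite[Proposition~C.11]{CoreyGrassmannians} (Appendix by Cueto), which asserts that the tight-span of any matroidal subdivision of the hypersimplex is basis-connecting. Under the identification of vertices of $\Delta(3,8)$ with triples $\lambda \in \binom{[8]}{3}$ via $\lambda \mapsto \epsilon_\lambda^*$---so that $\epsilon_\lambda^*$ lies in $\Delta_\sC$ exactly when $\lambda$ is a basis of $\sQ_\sC$---basis-connectedness coincides with the vertex-connecting condition from the preceding subsection. Hence $\TS(\sw)$ is vertex-connecting.

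For the second ingredient, I would enumerate the leaf-pairs of $\TS(\sw)$ as $(v_1,e_1), \ldots, (v_k,e_k)$ in any order and remove them one at a time. Distinct leaf-pairs are locally disjoint: distinct leaves of $\TS(\sw)$ have distinct unique adjacent edges (the only exception being the degenerate case where $\TS(\sw)$ is a single edge, in which case $\Sigma_\sL = \emptyset$ is trivially vertex-connecting). Thus after removing $(v_1,e_1), \ldots, (v_j,e_j)$, each remaining pair $(v_i,e_i)$ with $i>j$ is still a leaf-pair of the current intermediate subcomplex, and Proposition \ref{prop:vertexConnectingLeaf} applies to yield vertex-connectedness of the next step. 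After $k$ iterations the resulting subcomplex is precisely $\Sigma_\sL$, which is therefore vertex-connecting.

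No substantial obstacle is anticipated. The only point requiring care is the combinatorial verification that the iteration is well-defined, which depends only on the leaf structure of the 1-skeleton $\Gamma(\sw)$ together with the local hyperplane separation used in Proposition \ref{prop:vertexConnectingLeaf}; nothing further about the internal geometry of cells of $\TS(\sw)$ is needed.
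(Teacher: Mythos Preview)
Your proposal is correct and follows essentially the same approach as the paper: invoke \cite[C.11]{CoreyGrassmannians} to establish that $\TS(\sw)$ is vertex-connecting, then apply Proposition~\ref{prop:vertexConnectingLeaf} to remove the leaf-pairs. The paper compresses the second step into a single sentence, while you spell out the iteration and the mild bookkeeping needed to ensure each original leaf-pair remains a leaf-pair after earlier removals; this added care is sound but not a different method.
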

\begin{proof}
The complex $\TS(\sw)$ is vertex-connecting by \cite[Lemma~C.11]{CoreyGrassmannians} (Appendix by Cueto). Now apply Proposition \ref{prop:vertexConnectingLeaf}. 
\end{proof}

By Propositions \ref{prop:coordRingSigma} and \ref{prop:deLeafBasisConnecting}, the coordinate ring of $\varprojlim_{\Sigma_{\sL}} R_{\sQ}$ is isomorphic to $R_{\Sigma_{\sL}}$. Thus, we need some techniques for determining when a ring of the form $R_{\Sigma}$ is a regular (i.e., its local rings are regular) integral domain. We work in the following more general context. Let $M = \Z^b$  with standard basis $\epsilon_{1}^{*},\ldots,\epsilon_{b}^{*}$, let $x_i \in \C[M]$ be the variable corresponding to $\epsilon_i^*$. More generally, let $x^{\su} \in \C[M]$ be the monomial corresponding to $\su\in M$. Let $B = \C[x_{1},\ldots,x_{b}]$. Given $f  \in B$, let $\su(f)$ be 
\begin{equation*}
    \su(f) = \sum_{c_{\su} \neq 0} \su \hspace{20pt} \text{where} \hspace{20pt} f = \sum_{\su} c_{\su}x^{\su}.
\end{equation*}
Let $S\subset B$ a finitely generated multiplicative semigroup whose saturation contains the variables $x_1,\ldots,x_b$, and $I \subset B$ an ideal generated by distinct nonzero elements $f_{1},\ldots,f_a$; assume that  $a\leq b$ ($a=0$ if $I=\langle 0 \rangle$). Set $R = S^{-1}B/I$. Let $A(f_1,\ldots,f_a)$ be the $a\times b$ matrix with rows $\su(f_1),\ldots,\su(f_a)$. 

\begin{lemma}
\label{lem:upperTriangularEliminate}
If, after a permutation of the rows, the matrix $A(f_1,\ldots,f_a)$ has an upper-triangular $(a\times a)$-submatrix whose diagonal entries equal 1, then $S^{-1}B/I$ is a regular integral domain of Krull dimension $b-a$. 
\end{lemma}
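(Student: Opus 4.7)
The plan is to identify $R := S^{-1}B/I$ with a localization of a polynomial subring in $b-a$ of the $b$ variables, from which regularity, integrality, and Krull dimension $b-a$ all follow at once. After reordering rows and choosing columns $j_1 < \cdots < j_a$ witnessing the upper-triangular submatrix, the hypothesis $\su(f_i)_{j_k} = 0$ for $k<i$ says no monomial of $f_i$ involves any of $x_{j_1},\ldots,x_{j_{i-1}}$, while $\su(f_i)_{j_i} = 1$ forces exactly one monomial of $f_i$ to involve $x_{j_i}$, necessarily with exponent $1$. Hence
\[
    f_i \;=\; c_i\,m_i\,x_{j_i} \;+\; g_i, \qquad c_i \in \C^\times,
\]
where $m_i$ is a monomial in $\{x_l : l \neq j_1,\ldots,j_i\}$ and $g_i \in \C[x_l : l \neq j_1,\ldots,j_i]$.

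The key point is that because the saturation of $S$ contains $x_1,\ldots,x_b$, every monomial in $B$ is a unit in $S^{-1}B$; in particular $c_i m_i$ is a unit. The relation $f_i = 0$ in $R$ is therefore equivalent to $x_{j_i} = -g_i/(c_i m_i)$, and I would use this to eliminate $x_{j_a}, x_{j_{a-1}}, \ldots, x_{j_1}$ in that order. Since $g_a$ and $m_a$ only involve variables outside $\{j_1,\ldots,j_a\}$, the variable $x_{j_a}$ is already expressed inside $B' := \C[x_l : l\notin \{j_1,\ldots,j_a\}]$ (after inverting one monomial); for $i<a$, the expression for $x_{j_i}$ may involve $x_{j_{i+1}},\ldots,x_{j_a}$, each of which has already been eliminated, so substitution yields $x_{j_i} = h_i \in \widetilde S^{-1}B'$ for a suitable multiplicative set $\widetilde S \subset B'\setminus\{0\}$ generated by the images of $S$ and the monomials that arise as denominators during substitution.

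Formally, this procedure defines a ring map $\phi : S^{-1}B \to \widetilde S^{-1}B'$ by $\phi|_{B'}=\mathrm{id}$ and $\phi(x_{j_i})=h_i$; by construction $\phi(f_i)=0$ for every $i$, so $\phi$ descends to $\bar\phi : R \to \widetilde S^{-1}B'$. In the other direction, the composition $B'\hookrightarrow B \twoheadrightarrow R$ sends each element of $\widetilde S$ to a unit in $R$ (since every $x_l$ and every $s\in S$ is already invertible there), so it extends to a map $\widetilde S^{-1}B' \to R$; the two maps are then seen to be mutually inverse on generators, hence inverse ring isomorphisms. Since $\widetilde S$ consists of nonzero elements of the integral domain $B'$, the ring $\widetilde S^{-1}B'$ is a regular integral domain of Krull dimension $\dim B' = b-a$, as required.

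The main obstacle is the bookkeeping for $\widetilde S$ and the verification that $\bar\phi$ is well-defined and inverse to the natural map from $\widetilde S^{-1}B'$; however, the triangular structure ensures that at the $i$-th step of the downward induction the substitution introduces no variables from $\{x_{j_1},\ldots,x_{j_{i-1}}\}$ and no zero divisors, so no conflicts can arise and the argument goes through cleanly.
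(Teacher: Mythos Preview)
Your argument is correct and rests on the same idea as the paper's—eliminate the variables $x_{j_1},\ldots,x_{j_a}$ one at a time using the triangular structure—but you run the elimination in the opposite order. The paper argues by induction on $a$ and eliminates $x_{j_1}$ first: writing $f_1 = g\,x_{j_1} - h$ with $g$ a monomial and $h\in\C[x_l:l\neq j_1]$, the substitution $x_{j_1}\mapsto h/g$ yields $R\cong \bar S^{-1}\C[x_l:l\neq j_1]/\langle f_2,\ldots,f_a\rangle$. The point of this order is that the upper-triangular hypothesis says precisely that $f_2,\ldots,f_a$ do \emph{not} involve $x_{j_1}$, so they are untouched by the substitution and the inductive hypothesis applies with no further bookkeeping. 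Your reverse order (eliminate $x_{j_a}$ first) also works, but since $f_1,\ldots,f_{a-1}$ may well involve $x_{j_a}$, you must feed $h_a,h_{a-1},\ldots$ back into each $m_i$ and $g_i$; in particular the denominators that accumulate in $\widetilde S$ are not just monomials—powers of the polynomials $g_k$ appear as well, since $g_k$ is the numerator of $h_k$ and hence shows up in $\phi(m_i)$ whenever $m_i$ contains a factor $x_{j_k}$. Your description of $\widetilde S$ as generated by ``images of $S$ and the monomials that arise as denominators'' is slightly imprecise on this point, though the argument still goes through once you observe that each $g_k$ is a unit in $R$ (equal there to $-c_k m_k x_{j_k}$).
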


\begin{proof}
We proceed by induction on $a$. If $a=0$, then $I=\langle 0 \rangle$, and the conclusion is clear. Now suppose that the lemma is true for values strictly less than $a$. After permuting the rows and columns (equivalently, permuting the $f_i$'s and $x_{j}$'s), we may assume that the first $a$ columns form an upper triangular matrix whose diagonal entries equal 1. This means that 
\begin{equation*}
    f_1 = g \cdot x_1 - h \hspace{15pt} \text{where} \hspace{15pt} g = c \prod_{i>1} x_i^{u_i}, \hspace{10pt} \text{and} \hspace{10pt} h \in \C[x_{2},\ldots,x_b]
\end{equation*}
Here, $c$ is a nonzero constant and the $u_i$ are nonnegative. Therefore, the assignment $x_1\mapsto h/g$, and $x_i \mapsto x_i$  for $i>1$ defines a ring isomorphism
\begin{equation*}
    R \to \bar{S}^{-1} \C[x_2,\ldots,x_b] / \langle f_2,\ldots,f_b \rangle 
\end{equation*}
where $\bar{S}$ is the image of $S$ under the above substitution. The lemma now follows from the inductive hypothesis. 
\end{proof}

Lemma \ref{lem:upperTriangularEliminate} suffices to determine that $R_{\Sigma_{\sL}}$ is a regular integral domain for most cases in $\sG_3$. For the remaining, we use the following Algorithm. Given $f\in B$, denote by $\deg_{k}(f)$ the degree of $x_k$ in $f$, i.e., the highest power of $x_k$ appearing in $f$. If $\deg_{k}(f) = 1$, then denote by $\coef(f;x_k)$ the coefficient of $x_k$ in $f$.
We say that $f$ is \textit{reduced} if $f$ has no unit other than elements of $\C^*$ as a factor.  Because $B$ is a unique factorization domain, if $f\in S^{-1}B$ is not a unit, then there is a unique expression (up to a factor lying in $\C^*$) $f = g\cdot  \bar{f}$ such that $g\in S$ and $\bar{f}$ lie in $B$ and has no unit (other than an element of $\C^*$) as a factor.

\begin{algm}\label{algm:reduceIdeal}
\leavevmode

\noindent \textit{Input}: The polynomial ring $B = \C[x_1,\ldots,x_b]$, $S\subset B$ a multiplicative semigroup containing all monomials,  and $\pF \subset B$ a finite set of reduced generators of $I\subset S^{-1}B$. Set $R = S^{-1} B / I$.  

\medskip

\noindent \textit{Output}: A new presentation of $R$, say $R \cong \bar{S}^{-1}\bar{B} / \bar{I}$, and a finite set $\bar{\pF}\subset \bar{B}$ of reduced generators of $\bar{I}\subset \bar{S}^{-1} \bar{B}$ such that 
\begin{itemize}
    \item[-] $\bar{B}$ is a polynomial ring on a subset of the $x_1,\ldots,x_b$
    \item[-] for each $x_k\in \bar{B}$ and $f \in \bar{\pF}$, we have $\deg_{k} f \neq 1$ or $\deg_{k} f = 1$ but $\coef(f;x_k) \notin S$.
\end{itemize}
In particular, if $\bar{I} = \langle 0 \rangle$, then $R$ is a regular integral domain. 

\medskip

\noindent Find $f\in \pF$ and $x_k\in B$ such that $\deg_{k} f = 1$ and $\coef(f;x_k) \in S$. This means that
\begin{equation*}
f = g \cdot x_k - h
\end{equation*}
for some $g\in S$ and $h \in B$ such that $\deg_k g = 0$ and $\deg_k h = 0$. Then perform the substitution $x_k \mapsto h/g$ followed by a reduction for all elements of $\pF$, and just the substitution for the elements of $S$; this  produces new sets $\bar{\pF}$ and $\bar{S}$. Set $\bar{B} = \C[x_j \, : \, j\neq k]$.  Now repeat this procedure with $\pF\gets \bar{\pF}$, $S \gets \bar{S}$ and $B \gets \bar{B}$. 

\medskip

\noindent Once no such pair $(f,x_k)$ exists, this function returns $\pF$ and the presentation $R = S^{-1} B / I$. 
\end{algm}

\begin{proposition}
\label{prop:G3}
If $\sw \in \sG_5$, then $\Gr(\sw)$ is smooth and irreducible of dimension 15. \end{proposition}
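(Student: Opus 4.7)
The plan is to reduce $\Gr(\sw)$ to an inverse limit over a small vertex-intersecting core, and then reassemble it by adding back fins and leaves in two stages. Starting from the tight-span $\TS(\sw)$, first form $\Sigma_{\sL} \subset \TS(\sw)$ by stripping off every leaf-pair, and then form $\Sigma' \subset \Sigma_{\sL}$ by removing all fins of $\Sigma_{\sL}$ whose connecting path has length $1$. By the defining property of $\sG_5$ the subcomplex $\Sigma'$ is vertex-intersecting, and by iterated application of Propositions \ref{prop:vertexConnectingLeaf} and \ref{prop:vertexConnectingFin} (starting from the vertex-connectivity of $\TS(\sw)$ used in the proof of Proposition \ref{prop:deLeafBasisConnecting}) the subcomplex $\Sigma'$ is also vertex-connecting.

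The first main step is to analyse the inverse limit $\varprojlim_{\Sigma'} \Gr$. Since $\Sigma'$ is vertex-intersecting and vertex-connecting, Proposition \ref{prop:coordRingSigma} identifies its coordinate ring with $R_{\Sigma'} = (S_{\Sigma'})^{-1} B_{\Sigma'} / I_{\Sigma'}$ written in affine coordinates adapted to a common basis $\mu$ of the central matroids. I would then verify that $R_{\Sigma'}$ is a regular integral domain of the expected Krull dimension by first attempting Lemma \ref{lem:upperTriangularEliminate} on the generating set of $I_{\Sigma'}$ supplied by the Plücker relations, and falling back to Algorithm \ref{algm:reduceIdeal} to eliminate generators that are linear in a variable with a unit coefficient whenever the upper-triangular pattern is not immediately visible. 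Because $\sG_5$ contains thousands of combinatorial types, this step is carried out by the \texttt{julia}/OSCAR code at the github repository cited in the introduction, which iterates over $\Sn{8}$-orbit representatives, builds $R_{\Sigma'}$ explicitly, and checks that the ideal returned by the algorithm is zero.

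Next, I would reassemble $\Gr(\sw)$. Adding back the fins of $\Sigma_{\sL}$ whose connecting path has length $1$ is done via Proposition \ref{prop:limitFins}; its SDC-hypothesis on each $\varphi_{\sF}$ is supplied by Proposition \ref{prop:finSDC}, which reduces to the purely combinatorial $B$-maximality of the fin relative to $\mu$. The fins that occur inside a $\sG_5$ subdivision are built from the matroids $\sU,\sU',\sV,\sW$ of Figure \ref{fig:matroidsUVW}, so any exceptional fin that fails $B$-maximality is still covered by Proposition \ref{prop:TSCUVW}, which asserts directly that the relevant maps are SDC. Finally, Proposition \ref{prop:removeLeavesLim} re-attaches the leaves of $\TS(\sw)$; its SDC-hypothesis on each leaf pair $(v,e)$ follows from Proposition \ref{prop:oneInternalFacet} because leaf vertex matroids have at most two lines. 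Chaining the dimension formulas \eqref{eq:dimTree}, \eqref{eq:limitFin}, and \eqref{eq:dimDeLeaf} expresses $\dim \Gr(\sw)$ as the core dimension plus corrections indexed by the fins and the leaves, which one then verifies equals $15$ on every combinatorial type.

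The main obstacle will be establishing that $R_{\Sigma'}$ is a regular integral domain uniformly across $\sG_5$. Upper-triangular elimination à la Lemma \ref{lem:upperTriangularEliminate} handles the generic picture, but some core subcomplexes have entangled Plücker relations for which Algorithm \ref{algm:reduceIdeal} must actually succeed in reducing the ideal to zero, and there is no a priori guarantee that it will. A related pitfall is certifying that every fin with a length-$1$ connecting path is $B$-maximal; a non-$B$-maximal fin outside the $\sU,\sU',\sV,\sW$ list would demand a bespoke argument via Proposition \ref{prop:smoothIrreducibleFiberProduct}. Both issues are intrinsically finite, so the proof is completed by running the enumeration of $\pS_{\trop}(3,8)$-cones from \cite{BendleBoehmRenSchroter} and checking the two certificates on each $\Sn{8}$-orbit representative.
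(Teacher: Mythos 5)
Your argument is correct and coincides with the paper's own treatment of the $\sG_5$ case (Proposition \ref{prop:G5}, via Proposition \ref{prop:fins}): strip the leaves, strip the fins of $\Sigma_{\sL}$ with connecting path of length $1$ to reach a vertex-intersecting and vertex-connecting core, certify that the resulting ring is a regular integral domain via Lemma \ref{lem:upperTriangularEliminate} or Algorithm \ref{algm:reduceIdeal}, and reassemble using $B$-maximality of the fins and SDC leaf maps, checking the dimension count at the end. Be aware, however, that the statement you were handed is the one labelled Proposition \ref{prop:G3}, whose hypothesis ``$\sw\in\sG_5$'' is evidently a typo for $\sw\in\sG_3$: the proof the paper attaches to that proposition is the simpler $\sG_3$ argument, in which $\Sigma_{\sL}$ is already vertex-intersecting by the definition of $\sG_3$, so no fin removal is needed before applying Proposition \ref{prop:coordRingSigma} and Proposition \ref{prop:limitcvpLeaf}. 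One small caution on your fallback plan: Proposition \ref{prop:TSCUVW} certifies that individual maps $\Gr(\sQ)\to\Gr(\sQ')$ are SDC, not that the fin morphism $\varphi_{\sF}$ out of the fiber product $\Gr_{\pE}(\sF)$ is SDC, so a fin failing $B$-maximality would not automatically be covered by it; the paper instead verifies computationally that every length-$1$ fin occurring in $\sG_5$ is $B$-maximal, so no such bespoke argument is required.
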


\begin{proof}
For $26\, 187$ combinatorial types $\sw\in \sG_3$, there is a presentation of the ideal $I_{\Sigma_{\sL}}$ that satisfies the conditions of Lemma \ref{lem:upperTriangularEliminate}. So $\varprojlim_{\Sigma_{\sL}}\Gr$ is smooth and irreducible. For the remaining $2040$ combinatorial types, we apply Algorithm \ref{algm:reduceIdeal}. In each case, this function returns a presentation of $R_{\Sigma_{\sL}}$ whose ideal is $\langle 0 \rangle$, whence $R_{\Sigma_{\sL}}$, and therefore $R_{\Sigma}$, is a regular integral domain. In either case, we get that $\Gr(\sw)$ is smooth and irreducible by Proposition \ref{prop:limitcvpLeaf}. 

Finally, we compute the dimensions of each $\Gr(\sw)$ using Formula \eqref{eq:dimLimitCVPLeaf}, verify that each dimension is $15$. 
\end{proof}

\subsection{Case $\sG_4$}\label{sec:G4} 
There are $483$ combinatorial types belonging to the set $\sG_4$. 

\begin{proposition}
\label{prop:G4}
If $\sw \in \sG_4$, then $\Gr(\sw)$ is smooth and irreducible of dimension 15. 
\end{proposition}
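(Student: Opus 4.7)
The approach parallels the strategy used for $\sG_3$, but with the branch-removal proposition (Proposition \ref{prop:removeBranchLim}) playing the role of Proposition \ref{prop:limitcvpLeaf}. The plan is to first reduce the computation of $\Gr(\sw)$ to that of $\varprojlim_{\Sigma_{\mathsf{Br}}} \Gr$, and then show the latter is smooth and irreducible by establishing a convenient presentation of its coordinate ring. Since $\Sigma_{\mathsf{Br}}$ is obtained from $\TS(\sw)$ by iteratively removing leaf pairs, and $\TS(\sw)$ is vertex-connecting by \cite[Lemma~C.11]{CoreyGrassmannians} (Appendix by Cueto), repeated application of Proposition \ref{prop:vertexConnectingLeaf} shows that $\Sigma_{\mathsf{Br}}$ is vertex-connecting. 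Combined with the standing hypothesis that $\Sigma_{\mathsf{Br}}$ is vertex-intersecting for $\sw\in\sG_4$, Proposition \ref{prop:coordRingSigma} gives the presentation of the coordinate ring of $\varprojlim_{\Sigma_{\mathsf{Br}}}\Gr$ as $R_{\Sigma_{\mathsf{Br}}}$.

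Second, I would verify regularity and irreducibility of $R_{\Sigma_{\mathsf{Br}}}$ for each of the $483$ combinatorial types in $\sG_4$. As in the proof of Proposition \ref{prop:G3}, this is done computationally: for most cases, a suitable ordering of the generators of $I_{\Sigma_{\mathsf{Br}}}$ will satisfy the upper-triangular condition of Lemma \ref{lem:upperTriangularEliminate}; for the exceptional ones, one runs Algorithm \ref{algm:reduceIdeal} and checks that the returned ideal is $\langle 0 \rangle$, which forces $R_{\Sigma_{\mathsf{Br}}}$ to be a regular integral domain. One must also confirm along the way that no branch-vertex matroid is isomorphic to $\spMat$ (which would be the only obstruction to smoothness and irreducibility appearing at the level of individual thin Schubert cells by Proposition \ref{prop:smoothIrredTSC}).

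Third, to apply Proposition \ref{prop:removeBranchLim} we must verify that for every branch vertex $v$ and adjacent branch edge $e$ the morphism $\Gr(\sQ_v)\to\Gr(\sQ_e)$ is an SDC-morphism. By construction, each such pair corresponds to a leaf pair in some intermediate tree during the iterative trimming, so $\sQ_e$ is given by an internal face of $\sQ_v$. Proposition \ref{prop:oneInternalFacet} handles matroids with at most two lines, and the remaining cases (if any) can be dealt with using Proposition \ref{prop:TSCUVW} or a direct computation along the lines of the case analysis in the proof of Proposition \ref{prop:oneInternalFacet}. Finally, the dimension formula \eqref{eq:dimDeBranch} gives $\dim\Gr(\sw)=15$, which one checks by software over all $483$ types.

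The main obstacle is the verification of the SDC property for the branch morphisms: unlike in the $\sG_3$ case where leaf edges are always adjacent to facet-matroids of a well-controlled form, branch edges can correspond to a wider range of internal faces, and so one has to check that none of the pathologies illustrated by Figure \ref{fig:nondominantMap} occurs at a branch pair. This is ultimately a finite combinatorial check, but it is the place where the assumption $\sw\notin\Sn{8}\cdot\spCone$ must be used to rule out the disconnected case of Proposition \ref{prop:spMat2Components}.
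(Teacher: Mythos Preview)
Your outline is essentially the paper's own approach: reduce to $\varprojlim_{\Sigma_{\mathsf{Br}}}\Gr$ via Proposition \ref{prop:removeBranchLim}, present its coordinate ring by Proposition \ref{prop:coordRingSigma} (using vertex-intersecting from the $\sG_4$ hypothesis and vertex-connecting from iterated Proposition \ref{prop:vertexConnectingLeaf}), and then check regularity of $R_{\Sigma_{\mathsf{Br}}}$ by Lemma \ref{lem:upperTriangularEliminate} or Algorithm \ref{algm:reduceIdeal}. The dimension count via Formula \eqref{eq:dimDeBranch} is also the same.

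Where the paper sharpens your argument is precisely at the place you flag as ``the main obstacle.'' The paper records the computational fact that, for every $\sw\in\sG_4$, \emph{each branch of $\TS(\sw)$ is a path}. Consequently every branch vertex has at most two neighbors in the dual graph, so its matroid polytope has at most two internal facets, and Proposition \ref{prop:oneInternalFacet} applies uniformly to give the required SDC-morphisms. No residual case analysis (via Proposition \ref{prop:TSCUVW} or ad hoc computation) is needed. This also dispatches your worry about $\spMat$: that matroid has eight lines, so it cannot appear as a branch-vertex matroid once one knows branch vertices have at most two internal facets. (Separately, the cone $\spCone$ already lies in $\sG_2$, since its tight span is a star tree, so membership in $\sG_4$ excludes it from the outset.)

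In summary, your strategy is correct; what you are missing is this single structural observation about the branches, which collapses the SDC verification to a direct citation of Proposition \ref{prop:oneInternalFacet}.
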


\begin{proof}
For any $\sw\in \sG_4$, it happens to be the case that any branch of $\TS(\sw)$ is a path, and so each of its vertices has 2 or fewer neighbors. For such a vertex $v$, its dual polytope $\Delta_v$ has 2 or fewer internal facets. Thus, each such $\Gr(\sQ_v)$ is smooth and irreducible and the maps from $\Gr(\sQ_v)$ corresponding to the edges adjacent to $v$ are SDC-morphisms by Proposition \ref{prop:oneInternalFacet}. By Proposition \ref{prop:removeBranchLim}, to show that $\Gr(\sw)$ is smooth and irreducible, it suffices to show that $\varprojlim_{\Sigma_{\mathsf{Br}}} \Gr$ is smooth and irreducible. For $15$ combinatorial types, there is a presentation of the ideal $I_{\Sigma_{\mathsf{Br}}}$ that satisfies the conditions of Lemma \ref{lem:upperTriangularEliminate}. For the remaining $468$ combinatorial types, we apply Algorithm  \ref{algm:reduceIdeal}, which always returns the ideal $\langle 0 \rangle$.  In either case, we see that $R_{\Sigma_{\mathsf{Br}}}$ is a regular integral domain, as required. Finally, we verify that the dimensions of the $\Gr(\sw)$ are all 15 using the formula
\begin{equation*}
    \dim \Gr(\sw) = \dim \varprojlim_{\Sigma_{\mathsf{Br}}} \Gr - \sum_{\substack{e \text{ branch } \\ \text{edge} }} \dim \Gr(\sQ_e) + \sum_{\substack{v \text{ branch } \\ \text{vertex} }} \dim \Gr(\sQ_v) 
\end{equation*}
which follows from Formula \ref{eq:dimDeBranch}. 
\end{proof}

\subsection{Case $\sG_5$} There are $14\,389$  combinatorial types belonging to the set $\sG_5$. Similar to the case $\sG_3$, given a combinatorial type $\sw$, let $\Sigma = \TS(\sw)$ and $\Sigma_{\sL} \subset \Sigma$ the subcomplex obtained by removing all leaf-vertices of $\Sigma$ together with their adjacent edges. Let $Y_{\Leaf}$ and $Y_{\Leaf}'$ be the schemes in Formula \ref{eq:leavesVE}. Formula \ref{eq:deleaf} holds, but now $\varprojlim_{\Sigma_{\sL}} \Gr$ cannot be evaluated using a presentation provided by Proposition \ref{prop:coordRingSigma} since $\Sigma_{\sL}$ is not vertex-intersecting. 

Recall that a fin $\sF$ is $B$-maximal if there is a $\mu \in \sQ_{\sF}$ such that $\sQ$ is $B$-maximal for all $\sQ \in \pE_{\vertices}(\sF)$.  Given a collection of fins $\mathfrak{F}$ of $\Sigma_{\sL}$, let  $\Sigma_{\sL}(\mathfrak{F}) \subset \Sigma_{\sL}$ be the subcomplex obtained by removing the fins $\sF \in \mathfrak{F}$.  

\begin{proposition}\label{prop:fins}
If  $\varprojlim_{\Sigma_{\sL}(\mathfrak{F})}\Gr$ is smooth and irreducible and each fin $\sF \in \mathfrak{F}$ is $B$-maximal, then $\Gr(\sw)$ is smooth and irreducible of dimension
\begin{align}
    \nonumber \dim & \Gr(\sw) = \dim \varprojlim_{\Sigma_{\sL}(\mathfrak{F})} \Gr \; - \; \sum_{\substack{e \text{ leaf} \\ \text{edge}}} \dim \Gr(\sQ_e) \; + \sum_{\substack{v \text{ leaf} \\ \text{vertex}}} \dim \Gr(\sQ_v) \; \\
    & +\sum_{\sF \in \mathfrak{F}} \;\left(\dim \Gr(\sQ_{\sF}) - \sum_{e \in \pE_{\edge}(\sF)} \dim \Gr(\sQ_e)  + \sum_{v \in \pE_{\vertices}(\sF)} \dim \Gr(\sQ_v)   \right) \label{eq:dimDefin}
\end{align}
\end{proposition}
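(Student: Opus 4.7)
The plan is to chain together three results already established in the paper. The hypothesis gives us a smooth and irreducible limit over $\Sigma_{\sL}(\mathfrak{F})$, and we want to propagate smoothness and irreducibility first across the fins in $\mathfrak{F}$ to obtain $\varprojlim_{\Sigma_{\sL}} \Gr$, then across the leaf-pairs to obtain $\Gr(\sw)$.

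First I would apply Proposition \ref{prop:finSDC} to each fin $\sF \in \mathfrak{F}$: since $\sF$ is $B$-maximal by hypothesis, the induced morphism $\varphi_{\sF}:\Gr_{\pE}(\sF) \to \Gr(\sQ_{1,k}) \times_{\Gr(\sQ_{\sF})} \Gr(\sQ_{k-\ell,k-\ell+1})$ is an SDC-morphism. Next, I would apply Proposition \ref{prop:limitFins} with the diagram $Z = \Gr$, the complex $\Sigma = \Sigma_{\sL}$, and the collection of fins $\mathfrak{F}$, so that $\Sigma(\mathfrak{F}) = \Sigma_{\sL}(\mathfrak{F})$. The hypotheses of Proposition \ref{prop:limitFins} are exactly satisfied, so $\varprojlim_{\Sigma_{\sL}} \Gr$ is smooth and irreducible, with dimension given by Formula \eqref{eq:limitFin}:
\begin{equation*}
    \dim \varprojlim_{\Sigma_{\sL}} \Gr = \dim \varprojlim_{\Sigma_{\sL}(\mathfrak{F})} \Gr + \sum_{\sF \in \mathfrak{F}} \left( \dim \Gr(\sQ_\sF) - \sum_{e\in \pE_{\edge}(\sF)} \dim \Gr(\sQ_e) + \sum_{v\in \pE_{\vertices}(\sF)} \dim \Gr(\sQ_v)\right).
\end{equation*}

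Finally, I would apply Proposition \ref{prop:limitcvpLeaf} to $\Sigma = \TS(\sw)$, using that $\varprojlim_{\Sigma_{\sL}} \Gr$ is now known to be smooth and irreducible. This yields that $\Gr(\sw)$ is smooth and irreducible of dimension
\begin{equation*}
    \dim \Gr(\sw) = \dim \varprojlim_{\Sigma_{\sL}} \Gr - \sum_{\substack{e \text{ leaf}\\ \text{edge}}} \dim \Gr(\sQ_e) + \sum_{\substack{v \text{ leaf}\\ \text{vertex}}} \dim \Gr(\sQ_v).
\end{equation*}
Substituting the first dimension formula into the second yields exactly Formula \eqref{eq:dimDefin}, completing the proof.

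There is no real obstacle here — this proposition is a straightforward composition of Propositions \ref{prop:finSDC}, \ref{prop:limitFins}, and \ref{prop:limitcvpLeaf}, and the only actual work is the dimension bookkeeping. The conceptual content was already carried out in Section \ref{sec:toolsForLimits}, where the reductions by leaf-pairs and by fins were established; Proposition \ref{prop:fins} simply packages both reductions in a single statement adapted to the setting of tight-spans of matroidal subdivisions.
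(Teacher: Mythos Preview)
Your proposal is correct and matches the paper's own proof essentially verbatim: the paper also invokes Proposition~\ref{prop:finSDC} together with the fin-removal result (it cites Proposition~\ref{prop:addfin} directly rather than its packaged form Proposition~\ref{prop:limitFins}, but this is the same content) to obtain smoothness, irreducibility, and the dimension of $\varprojlim_{\Sigma_{\sL}}\Gr$, and then applies Proposition~\ref{prop:limitcvpLeaf} to pass to $\Gr(\sw)$.
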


\begin{proof}
By Propositions \ref{prop:addfin} and \ref{prop:finSDC}, we have that $\varprojlim_{\Sigma_{\sL}} \Gr$ is smooth and irreducible of dimension 
{\footnotesize
\begin{equation*}
    \dim \varprojlim_{\Sigma_{\sL}} \Gr = \dim \varprojlim_{\Sigma_{\sL}(\mathfrak{F})} \Gr \; +\sum_{\sF \in \mathfrak{F}} \;\left(\dim \Gr(\sQ_{\sF}) - \sum_{e \in \pE_{\edge}(\sF)} \dim \Gr(\sQ_e)  + \sum_{v \in \pE_{\vertices}(\sF)} \dim \Gr(\sQ_v)   \right)
\end{equation*}
}
Now apply Propositions \ref{prop:limitcvpLeaf}. 
\end{proof}

\begin{proposition}
\label{prop:G5}
If $\sw \in \sG_5$, then $\Gr(\sw)$ is smooth and irreducible of dimension 15. 
\end{proposition}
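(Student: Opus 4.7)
The strategy mirrors the treatment of $\sG_3$ and $\sG_4$, with the new ingredient being fin removal via Proposition \ref{prop:fins}. For a combinatorial type $\sw \in \sG_5$, let $\mathfrak{F}$ be the collection of all fins of $\Sigma_{\sL}$ whose connecting path has length $1$. By definition of $\sG_5$, the subcomplex $\Sigma_{\sL}(\mathfrak{F})$ obtained by removing these fins is vertex-intersecting, and by iteratively applying Proposition \ref{prop:vertexConnectingFin} (which applies precisely because the fins in $\mathfrak{F}$ have connecting path of length 1) together with Proposition \ref{prop:deLeafBasisConnecting}, $\Sigma_{\sL}(\mathfrak{F})$ is also vertex-connecting. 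Therefore, by Proposition \ref{prop:coordRingSigma}, the coordinate ring of $\varprojlim_{\Sigma_{\sL}(\mathfrak{F})} \Gr$ is isomorphic to $R_{\Sigma_{\sL}(\mathfrak{F})}$.

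The plan is to verify three things for each $\sw \in \sG_5$, in the following order. First, the fin-removal step: for each $\sF \in \mathfrak{F}$, I will check that $\sF$ is $B$-maximal by exhibiting a basis $\mu \in \sQ_\sF$ such that $d(\sQ_v,\mu) = \dim \Gr(\sQ_v)$ for every exposed vertex $v$ of $\sF$. Since the exposed vertices correspond to matroids of the form $\sU$, $\sU'$, $\sV$, or $\sW$ (whose dimensions are recorded in Proposition \ref{prop:TSCUVW}), this is a combinatorial check on bases. Second, the smoothness of the core: I will show that $R_{\Sigma_{\sL}(\mathfrak{F})}$ is a regular integral domain by first attempting to find a presentation of $I_{\Sigma_{\sL}(\mathfrak{F})}$ satisfying the upper-triangular elimination hypothesis of Lemma \ref{lem:upperTriangularEliminate}, and, failing that, running Algorithm \ref{algm:reduceIdeal} to reduce the ideal to $\langle 0 \rangle$. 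Third, the dimension check: with these pieces in place, Proposition \ref{prop:fins} applies and yields both the smoothness and irreducibility of $\Gr(\sw)$, together with a formula for $\dim \Gr(\sw)$ from \eqref{eq:dimDefin}; I will verify that this dimension evaluates to $15$.

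Each of these three verifications is individually straightforward for a fixed $\sw$, but as there are $14{,}389$ combinatorial types in $\sG_5$, the checks are carried out using the OSCAR and \texttt{polymake.jl} code in our repository. The main obstacle will be the second step: a priori, it is not guaranteed that Algorithm \ref{algm:reduceIdeal} terminates with the zero ideal, and there may be combinatorial types for which neither Lemma \ref{lem:upperTriangularEliminate} nor Algorithm \ref{algm:reduceIdeal} succeeds. In such residual cases, one would need to either enlarge $\mathfrak{F}$ by adding further fins (if they are $B$-maximal) or analyze the remaining ideal by hand. Given the pattern already established in cases $\sG_3$ and $\sG_4$, however, I expect the algorithm to succeed in every case, so that no residual analysis is necessary.
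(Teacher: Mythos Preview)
Your proposal is correct and follows essentially the same approach as the paper: remove the length-$1$ fins of $\Sigma_{\sL}$, verify each is $B$-maximal, use vertex-connecting/intersecting to compute $R_{\Sigma_{\sL}(\mathfrak{F})}$ via Proposition~\ref{prop:coordRingSigma}, check it is a regular integral domain using Lemma~\ref{lem:upperTriangularEliminate} or Algorithm~\ref{algm:reduceIdeal}, and finally confirm dimension $15$ via Formula~\eqref{eq:dimDefin}. One small caveat: your assertion that the exposed vertices of each fin necessarily correspond to matroids of the form $\sU$, $\sU'$, $\sV$, or $\sW$ is not justified and is not something the paper claims; the $B$-maximality check only requires comparing $d(\sQ_v,\mu)$ to $\dim \Gr(\sQ_v)$, and the latter can be computed directly for any $(3,8)$-matroid without assuming a particular shape.
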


\begin{proof}
Fix $\sw \in \sG_5$, $\Sigma = \TS(\sw)$, and $\Sigma_{\sL} \subset \Sigma$ the subcomplex obtained by removing all leaf vertices and edges. Let $\mathfrak{\sF}$ be the set of fins of $\Sigma_{\sL}$ that have connecting path length 1. We verify that each $\sF \in \mathfrak{F}$ is $B$-maximal. In this case, the set $\Sigma_{\sL}(\mathfrak{F})$ is vertex-connecting by Proposition \ref{prop:vertexConnectingFin} and  vertex-intersecting by hypothesis, and therefore we may compute the coordinate ring of $\varprojlim_{\Sigma_{\sL}(\mathfrak{F})} \Gr$ using Proposition \ref{prop:coordRingSigma}. We verify that these rings are smooth integral domains as in the proof of Proposition \ref{prop:G3}. Finally, we verify that $\dim \Gr(\sw) = 15$ using Formula \eqref{eq:dimDefin}. The proposition now follows from Corollary \ref{cor:limit2Init}. 
\end{proof}

\begin{example}
Let 
\begin{align*}
    \sw =& \se_{124} + \se_{125} + 2\se_{126} + 3\se_{137} + 2\se_{138} + 2\se_{145} + \se_{146} + \se_{156}+2\se_{178}+\se_{234}+\se_{245}+\se_{246}+\\
    &2\se_{247} + 2\se_{248} + \se_{256} + 
3\se_{278} + \se_{356} + 2\se_{378} + \se_{456} + 3\se_{478} + 2\se_{567}+2\se_{568}+2\se_{578} + 2\se_{678}.
\end{align*}
The tight span $\TS(\sw)$ of $\pQ(\sw)$ is given in \ref{fig:mantis}. We also see the matroid corresponding to the node $v_2$ of $\TS(\sw)$ that did not appear in Figure $\ref{fig:matroidsUVW}$.
\begin{figure}
    \centering
    \includegraphics[height = 5cm]{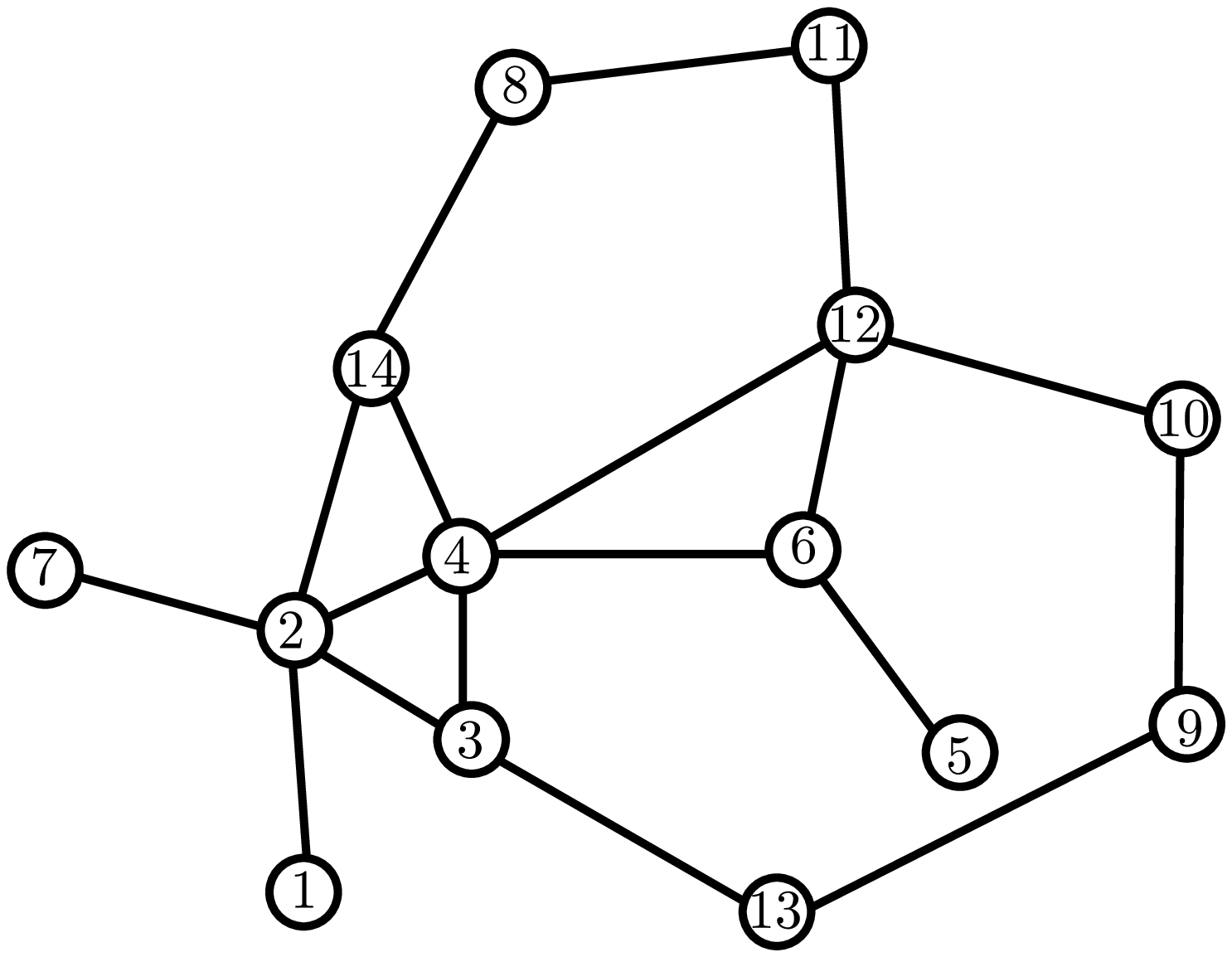}\space\space\space\space\space \includegraphics[height = 4cm]{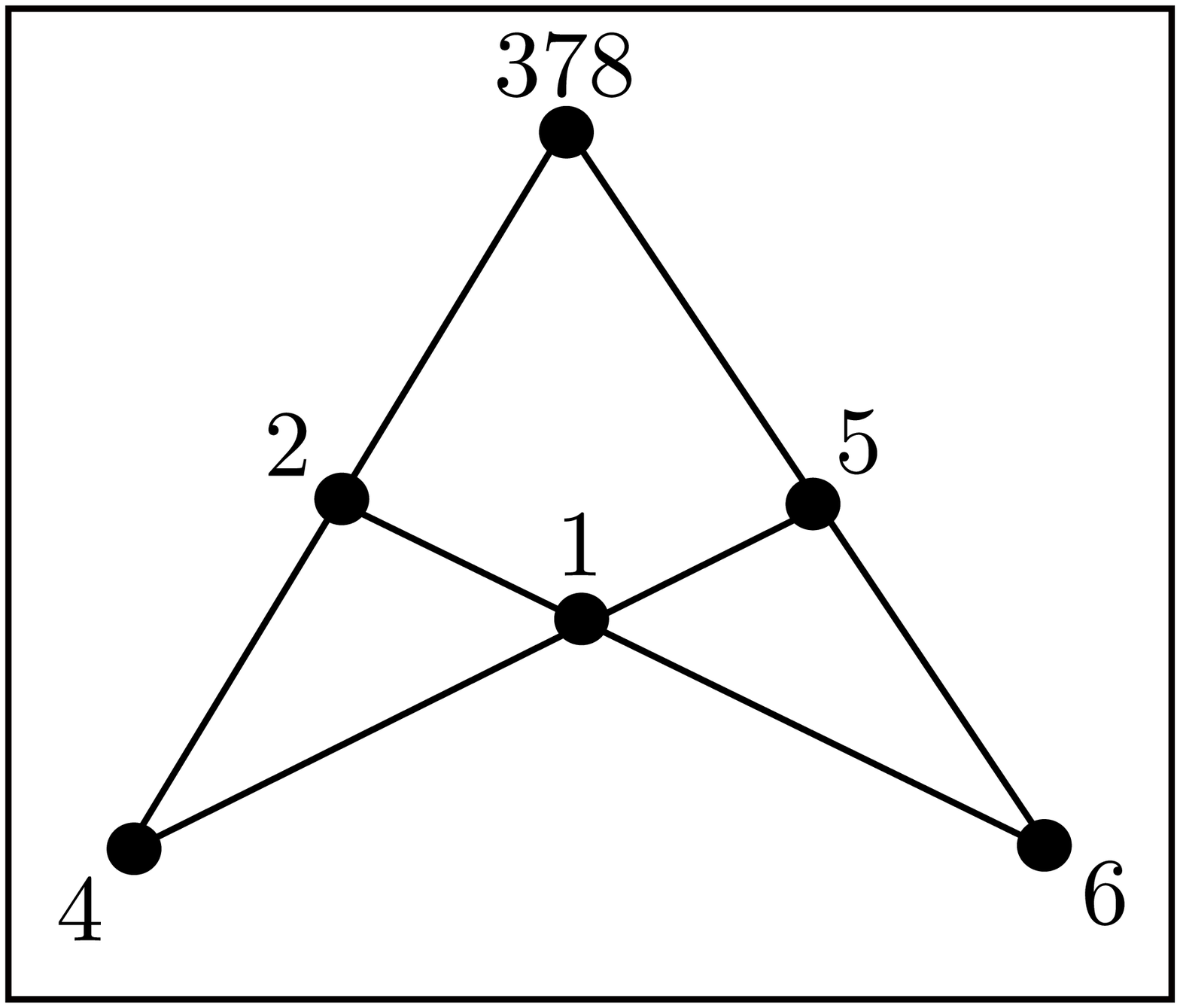}
    \caption{Dual graph of $\pQ(\sw)$ and the matroid corresponding to $\sQ_2$}
    \label{fig:mantis}
\end{figure}

The matroids $\sQ_i$ corresponding to the vertices $v_i$ of $\Gamma(\sw)$ are 
\begin{equation*}
\begin{array}{lll}
     \sQ_1=\sU(34578,1,2,6) &\sQ_3= \sW(78;3,156;2,4)  &\sQ_4 = \sW(1;24,56;3,78)\\
     \sQ_5=\sU(24568,1,3,7) &\sQ_6= \sW(7;8,2456;3,1) &\sQ_7 = \sU(23678,1,4,5)\\ \sQ_8 = \sU(1234,78,5,6) &\sQ_9= \sW(1356;7,8;2,4) & \sQ_{10} = \sU(1356,24,7,8)\\ 
     \sQ_{11} = \sW(1234;7,8;5,6)&\sQ_{12}= \sW(24;7,8;13,56) &\sQ_{13} = \sU(1356,78,2,4)\\
     \sQ_{14} = \sW(78;3,124;5,6) & &\\
\end{array}    
\end{equation*}
When $v_{i}$ and $v_j$ share an edge, the matroid of the edge is denoted by $\sQ_{i,j}$; in each case, $\sQ_{i,j}$ is isomorphic to a matroid of the form $\sU'$. Then Proposition \ref{prop:TSCUVW}, and direct computation for the $i=2$ case, tells us the dimensions of the thin Schubert cells corresponding to the vertices and edges of $\TS(\sw)$ are given by

\begin{equation*}
    \dim \Gr(\sQ_i) = 7 \text{ for } 1\leq i \leq 14 \hspace{15pt} \dim \Gr(\sQ_{i,j}) = 6\text{ for }(v_i,v_j)\in E(\Gamma(\sw))
\end{equation*}

\noindent Moreover, one can check that $\dim\Gr(\sQ_{\sF}) = 5$ for the unique fin $\sF=\langle v_4,v_6,v_{12}\rangle$ of $\TS(\sw)$. Finally, we compute $R_{\Sigma_{\sL}(\mathfrak{F})}$, to obtain the dimension of $\varprojlim_{\Sigma(\mathfrak{F})}$. We compute
\begin{equation*}
    B_{\Sigma_{\sL}(\mathfrak{F})} = \C[x_{11},x_{12},x_{13},x_{14},x_{15},x_{21},x_{22},x_{23},x_{24},x_{25},x_{32},x_{33},x_{34},x_{35}]
\end{equation*}
the ideal is
\begin{equation*}
    I_{\Sigma_{\sL}(\mathfrak{F})} = \langle x_{12}x_{23}-x_{11}x_{22},\; x_{12}x_{33}x_{24}+x_{22}x_{13}x_{34},\; x_{11}x_{33}x_{24}+x_{21}x_{13}x_{34}\rangle,
\end{equation*}
and $S_{\Sigma_{\sL}(\mathfrak{F})}$ is the semigroup of all monomials of $B_{\Sigma_{\sL}(\mathfrak{F})}$. One can check that the condition of Lemma \ref{lem:upperTriangularEliminate} does not hold. Thus we apply Algorithm \ref{algm:reduceIdeal}. That is, we obtain \begin{equation*}
    x_{12} \equiv \frac{x_{11}x_{22}}{x_{21}} \hspace{10pt} \text{ and } \hspace{10pt} x_{34} \equiv -\frac{x_{11}x_{33}x_{24}}{x_{21}x_{13}} \mod I_{\Sigma_{\sL}(\mathfrak{F})}. 
\end{equation*}

Observe that with these substitutions, we have $f_3 = 0$, and that the coordinate ring of
$\varprojlim_{\Sigma(\mathfrak{F})} \Gr$ reduces to $\C[x_{11}^{\pm},x_{13}^{\pm},x_{14}^{\pm},x_{15}^{\pm},x_{21}^{\pm},x_{22}^{\pm},x_{23}^{\pm},x_{24}^{\pm},x_{25}^{\pm},x_{32}^{\pm},x_{33}^{\pm},x_{35}^{\pm}]$. So we have that $\varprojlim_{\Sigma(\mathfrak{F})} \Gr$ is smooth, irreducible and of dimension 12.

We also have the fin $\sF$ is $B$-maximal, in the sense that $\sQ_6$ is $(B,\mu)$-maximal where $\mu=\{1,2,7\} \in \sQ_{\sF}$. By Proposition \ref{prop:fins} $\Gr(\sw)$ is smooth and irreducible. One may use Formula \eqref{eq:dimDefin} to verify that $\dim \Gr(\sw) = 15$. Hence, by Corollary \ref{cor:limit2Init}, $\init_{\sw}\Gr_{0}(3,8)$ is smooth and irreducible.
\end{example}

\subsection{Case $\sG_6$} 
There are $389$ combinatorial types in $\sG_6$.  For each combinatorial type $\sw \in \sG_6$, the dual graph $\Gamma(\sw)$ has a star-tree subgraph $\sT$ with 4 leaves; the matroid $\sC$ of the center node is isomorphic to $\sU(12,34,56,78)$, and the matroids $\sL_1,\sL_2,\sL_3,\sL_4$ of the 4 leaves are isomorphic to either $\sV(12,34,56;7,8)$ or $\sW(12; 34, 56; 7, 8)$. The set $\sG_6$ splits further into 5 groups $\sH_0, \sH_1, \sH_2, \sH_3, \sH_4$ where $\sw \in \sH_k$ if
\begin{equation*}
    |\{ m\, : \,  \sL_{m} \cong \sV(12,34,56;7,8) \}| = k.
\end{equation*}
The sizes of the $\sH_k$'s are
\begin{equation*}
    \begin{array}{lllll}
        |\sH_0| = 233 &  |\sH_1| = 127 & |\sH_2| = 25 & |\sH_3| = 3 & |\sH_4| = 1 
    \end{array}
\end{equation*}

\begin{proposition}
\label{prop:dimLimitT}
If $\sw \in \sH_k$, then the limit $\varprojlim_{\sT} \Gr$ is smooth and irreducible of dimension $11+k$. 
\end{proposition}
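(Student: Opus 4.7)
The plan is to recognize $\sT$ as a star-tree to which Proposition~\ref{prop:limitTreeAllButOne} applies, and then to read off the dimension from equation~\eqref{eq:dimTree}.

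First, because $\sC \cong \sU(12,34,56,78)$ and each leaf matroid $\sL_m$ is isomorphic to $\sV(12,34,56;7,8)$ or $\sW(12;34,56;7,8)$, Proposition~\ref{prop:TSCUVW} gives that $\Gr(\sC)$ and every $\Gr(\sL_m)$ is smooth and irreducible, with
\begin{equation*}
\dim \Gr(\sC) = 7, \quad \dim \Gr(\sV(12,34,56;7,8)) = 8, \quad \dim \Gr(\sW(12;34,56;7,8)) = 7.
\end{equation*}
The same proposition guarantees that each map $\varphi_{\sC,\sE_m}$ and $\varphi_{\sL_m,\sE_m}$ to an edge matroid $\sE_m$ is an SDC-morphism, since in every case the source is of type $\sU$, $\sV$, or $\sW$. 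The hypotheses of Proposition~\ref{prop:limitTreeAllButOne} are thus verified, so $\varprojlim_{\sT}\Gr$ is smooth and irreducible, and
\begin{equation*}
\dim \varprojlim_{\sT}\Gr = \dim \Gr(\sC) + \sum_{m=1}^{4}\bigl(\dim \Gr(\sL_m) - \dim \Gr(\sE_m)\bigr).
\end{equation*}

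Next I would show $\dim \Gr(\sE_m) = 6$ for every edge of $\sT$. The edge matroid $\sE_m$ is the matroid of the common facet $\Delta(\sC) \cap \Delta(\sL_m)$, and on the $\sC$-side it corresponds to the nondegenerate subset $\lambda = \{i,j\}$ equal to the parallel pair that the leaf $\sL_m$ breaks apart (e.g.\ $\lambda = \{7,8\}$ in the notation given). By the product description $\face_{-\epsilon_\lambda}\sC = \sC|\lambda \times \sC/\lambda$ one has $\sC|\lambda \cong U_{1,2}$, while $\sC/\lambda$ is the rank-$2$ parallelization of $U_{2,3}$ on $[8] \setminus \lambda$ with three parallel pairs. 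Since $\Gr$ of a direct sum of matroids is the product of the corresponding thin Schubert cells (as block-diagonal realizations), and $\dim \Gr(U_{1,2}) = 1$ while $\dim \Gr$ of the rank-$2$ parallelization of $U_{2,3}$ is $2 + 3 = 5$, I obtain $\dim \Gr(\sE_m) = 1 + 5 = 6$.

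Plugging in, each $\sV$-leaf contributes $8 - 6 = 2$ and each $\sW$-leaf contributes $7 - 6 = 1$, giving
\begin{equation*}
\dim \varprojlim_{\sT}\Gr = 7 + 2k + (4-k) = 11 + k,
\end{equation*}
as required. The main potential obstacle is the explicit identification of the edge matroid $\sE_m$: one must confirm that in both the $\sV$- and $\sW$-leaf cases the edge of $\sT$ really is cut out by the parallel pair of $\sC$ broken by the leaf, rather than by some other nondegenerate subset of $\sC$ or $\sL_m$. Once this is in place, the answer $\dim \Gr(\sE_m) = 6$ is uniform across all four edges and the dimension formula collapses to $11 + k$.
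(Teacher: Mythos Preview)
Your proposal is correct and follows essentially the same approach as the paper: verify the hypotheses of Proposition~\ref{prop:limitTreeAllButOne} via Proposition~\ref{prop:TSCUVW}, then plug into the dimension formula~\eqref{eq:dimTree}. The only difference is cosmetic: the paper identifies each edge matroid directly as $\sU'(12,34,56;78)$ and reads off $\dim\Gr(\sU')=2+2+2+2-2=6$ from Proposition~\ref{prop:TSCUVW}, whereas you compute the same matroid as $\sC|\lambda \times \sC/\lambda$ and obtain $1+5=6$ via the product decomposition; these are the same matroid, and your flagged ``potential obstacle'' is resolved exactly by observing that $\sU'(12,34,56;78)$ is this direct sum.
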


\begin{proof}
The matroid  $\sC_m$ corresponding to the edge between $\sC$ and $\sL_m$ is isomorphic to $\sU'(12,34,56;78)$, see Figure \ref{fig:matroidsUVW}. By Proposition \ref{prop:TSCUVW}, the thin Schubert cells $\Gr(\sC)$, $\Gr(\sC_m)$, $\Gr(\sL_m)$ are smooth, irreducible, and their dimensions are  $\dim \Gr(\sC)  = 7 $,  $\dim \Gr(\sC_m) = 6$ and
\begin{align*}
    \dim \Gr(\sL_m) = 8 \hspace{5pt} \text{ if } \sL_m  \cong \sV(12,34,56;7,8); \hspace{5pt} \dim \Gr(\sL_m) = 7 \hspace{5pt} \text{ if }  \sL_m \cong  \sW(12; 34, 56; 7, 8)
\end{align*}
Furthermore, the morphisms $\Gr(\sL_m) \to \Gr(\sC_m)$ are smooth and dominant with connected fibers by Proposition \ref{prop:TSCUVW}. The proposition now follows from Proposition \ref{prop:limitTreeAllButOne}. 
\end{proof}

\begin{proposition}
\label{prop:G6}
If $\sw \in \sG_6$, then $\Gr(\sw)$ is smooth and irreducible of dimension 15. 
\end{proposition}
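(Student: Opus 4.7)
The plan is to apply Proposition \ref{prop:fins} with $\mathfrak{F}$ taken to be the full collection of fins of $\Sigma_{\sL}$. By the defining condition of $\sG_6$, removing all fins from $\Sigma_{\sL}$ leaves a tree, and the description preceding Proposition \ref{prop:dimLimitT} forces this tree to be the star $\sT$ with center matroid $\sC \cong \sU(12,34,56,78)$ and four leaves drawn from $\{\sV(12,34,56;7,8), \sW(12;34,56;7,8)\}$. So $\Sigma_{\sL}(\mathfrak{F}) = \sT$ in the notation of Proposition \ref{prop:fins}, and the hypothesis that $\varprojlim_{\Sigma_{\sL}(\mathfrak{F})}\Gr$ is smooth and irreducible is already provided by Proposition \ref{prop:dimLimitT}.

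The remaining hypothesis is that every fin $\sF \in \mathfrak{F}$ is $B$-maximal. I would check this by enumerating the fins across the $389$ combinatorial types using the accompanying \texttt{julia} code, exactly as in the $\sG_5$ case (Proposition \ref{prop:G5}). For each fin $\sF$, one must exhibit a basis $\mu \in \sQ_{\sF}$ with $d(\sQ_v,\mu) = \dim \Gr(\sQ_v)$ for all exposed vertices $v$ of $\sF$; given the rigidly prescribed combinatorics at $\sT$ and the small catalogue of matroids in Proposition \ref{prop:TSCUVW}, this is expected to succeed uniformly. Once this is done, Proposition \ref{prop:finSDC} makes each $\varphi_\sF$ an SDC-morphism, and Proposition \ref{prop:fins} yields that $\Gr(\sw)$ is smooth and irreducible.

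The dimension count follows from Formula \eqref{eq:dimDefin}: starting with $\dim \varprojlim_{\Sigma_{\sL}(\mathfrak{F})}\Gr = 11 + k$ on $\sH_k$, one adds the fin contributions
\[
\sum_{\sF \in \mathfrak{F}}\Bigl(\dim \Gr(\sQ_{\sF}) - \!\!\sum_{e\in \pE_{\edge}(\sF)}\!\!\dim \Gr(\sQ_e) + \!\!\sum_{v\in \pE_{\vertices}(\sF)}\!\! \dim \Gr(\sQ_v)\Bigr)
\]
together with the leaf corrections $-\sum_{e}\dim \Gr(\sQ_e) + \sum_{v}\dim \Gr(\sQ_v)$. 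The individual dimensions are tabulated in Proposition \ref{prop:TSCUVW} and can be computed directly for any new matroid that appears on a fin. I would then verify by software that the total equals $15$ on each of the $389$ combinatorial types.

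The main obstacle I anticipate is the uniform verification of $B$-maximality of all fins of $\Sigma_{\sL}$: since the fins can attach to $\sT$ in many distinct ways, and since their exposed vertices can correspond to various matroid types, a clean case-free argument seems unlikely, and this is the step where the code is essential. All other ingredients are direct consequences of the machinery already developed in Sections \ref{sec:toolsForLimits} and \ref{sec:computationsTSC} together with Proposition \ref{prop:dimLimitT}.
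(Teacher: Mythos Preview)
Your proposal is correct and follows essentially the same route as the paper's proof: remove leaves to reach $\Sigma_{\sL}$, identify $\Sigma_{\sL}(\mathfrak{F})$ with the star tree $\sT$, invoke Proposition~\ref{prop:dimLimitT} for the limit over $\sT$, verify $B$-maximality of all fins by computer, and then apply Proposition~\ref{prop:fins} (equivalently Propositions~\ref{prop:limitFins} and~\ref{prop:finSDC}) together with Formula~\eqref{eq:dimDefin}. The paper adds one piece of structural information you did not anticipate: for every $\sw\in\sG_6$, the complex $\Sigma_{\sL}$ consists of exactly $\sT$ together with six fins, one attached along each of the $\binom{4}{2}$ pairs of edges of $\sT$, and each fin has connecting-path length~$2$; this makes the fin enumeration and $B$-maximality check more uniform than you might have expected.
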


\begin{proof}
Suppose $\sw \in \sH_k$, and let $\Sigma = \TS(\sw)$.  As usual, let $\Sigma_{\sL} \subset \Sigma$ be the subcomplex obtained by removing all leaf vertices and edges from $\Sigma$.  By a direct verification, $\Sigma_{\sL}$ consists of the tree $\sT$, and 6 fins, each of which has contact-length 2. In fact, there is a fin attached to $\sT$ at each pair of edges. We verify that each fin is $B$-maximal. Thus $\Gr(\sw)$ is smooth and irreducible by Propositions \ref{prop:limitFins} and \ref{prop:finSDC}. Finally, we verify that $\dim \Gr(\sw) = 15$ using Formula \eqref{eq:dimDefin}. The proposition now follows from Corollary \ref{cor:limit2Init}. 
\end{proof}

\begin{example}
\label{ex:G6}
The vector
{\smaller
\begin{align*}
    \sw = &2\,\se_{123} + 2\,\se_{124} + 2\,\se_{134} + 2\,\se_{135} + 2\,\se_{136} + 4\,\se_{137} + 3\,\se_{138} + \se_{156} + \se_{178} + 2\,\se_{234} + \se_{245}   \\
    &+\se_{246} + \se_{247}  + \se_{248} + \se_{256} + \se_{278}  + \se_{356} + \se_{378} + \se_{456} + \se_{478} +  2\se_{567}  + 2\se_{568} +  2\se_{578} + 2\se_{678} 
\end{align*}
}

\noindent represents a combinatorial type in $\sH_0$, the tight span $\TS(\sw)$ is shown in Figure \ref{fig:exampleG6}, together with the matroids corresponding to the nodes of $\TS(\sw)$. The tree $\sT$ consists of the vertices $v_{2},v_{7},v_{9},v_{10},v_{15}$, and $\varprojlim_{\sT}\Gr$ is smooth and irreducible of dimension 11 by Proposition \ref{prop:dimLimitT}.  The subcomplex $\Sigma_{\sL} \subset \TS(\sw)$ obtained by removing all leaves is obtained by removing the vertex $v_3$ and its adjacent edge. The fins of $\Sigma_{\sL}$ are:
\begin{equation*}
\begin{array}{lll}
    \sF_1 = \langle v_{2}, v_{4}, v_{9}, v_{15}\rangle & \sF_2 = \langle v_{2}, v_{7}, v_{8}, v_{9}\rangle & \sF_3 = \langle v_{6}, v_{7}, v_{9}, v_{10}\rangle  \\
    \sF_4 = \langle v_{9}, v_{10}, v_{11}, v_{15}\rangle & \sF_5 = \langle v_{1}, v_{2}, v_{5}, v_{9}, v_{10}\rangle & \sF_6 = \langle v_{7}, v_{9}, v_{12}, v_{13}, v_{14}, v_{15}\rangle.
\end{array}
\end{equation*}
With $\mu_1 = \{1,5,7\}$ and $\mu_2 = \{1,2,5\}$, the fins $\sF_1$, $\sF_4$, $\sF_6$ are $(B,\mu_1)$-maximal, and the fins $\sF_2$, $\sF_3$, $\sF_5$ are $(B,\mu_2)$-maximal. Therefore, $\Gr(\sw)$ is smooth and irreducible by Propositions \ref{prop:limitFins} and \ref{prop:finSDC}. Using this and Proposition \ref{prop:TSCUVW}, one may verify that $\dim \Gr(\sw) = 15$, and therefore $\init_{\sw} \Gr_0(3,8)$ is smooth and irreducible by Corollary \ref{cor:limit2Init}. 
\end{example}

\begin{figure}
\begin{minipage}{0.4\textwidth}
\includegraphics[width=\linewidth,keepaspectratio=true]{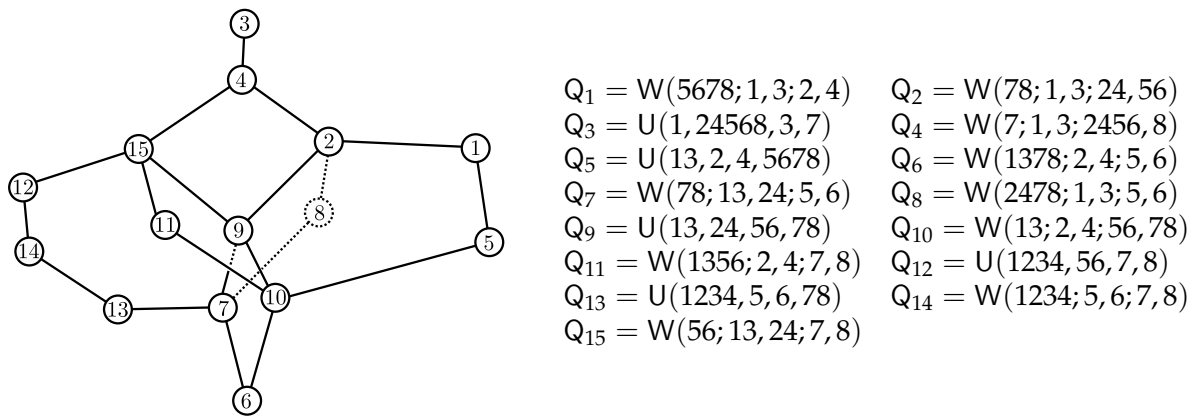}
\end{minipage}
\begin{minipage}{0.6\textwidth}
\small
\begin{align*}
    \begin{array}{ll}
    \sQ_1 = \sW(5678;1,3;2,4) & \sQ_2 = \sW(78;1,3;24,56) \\
    \sQ_3 = \sU(1,24568,3,7) & \sQ_4 = \sW(7;1,3;2456,8) \\
    \sQ_5 = \sU(13,2,4,5678) & \sQ_6 = \sW(1378;2,4;5,6) \\
    \sQ_7 = \sW(78;13,24;5,6) & \sQ_8 = \sW(2478;1,3;5,6) \\
    \sQ_9 = \sU(13,24,56,78) & \sQ_{10} = \sW(13;2,4;56,78) \\
    \sQ_{11} = \sW(1356;2,4;7,8) & \sQ_{12} = \sU(1234,56,7,8) \\
    \sQ_{13} = \sU(1234,5,6,78) & \sQ_{14} = \sW(1234;5,6;7,8) \\
    \sQ_{15} = \sW(56;13,24;7,8) &
    \end{array}
\end{align*}
\null
\par\xdef\tpd{\the\prevdepth}
\end{minipage}
\caption{Left: the tight span $\TS(\sw)$; right: the matroids corresponding to the nodes of $\TS(\sw)$}
\label{fig:exampleG6}
\end{figure}

\subsection{Completion of the proofs}

The following is a compilation of Propositions \ref{prop:specialLimit}, \ref{prop:G1}, \ref{prop:G2}, \ref{prop:G3}, \ref{prop:G4}, \ref{prop:G5},  \ref{prop:G6}, and Corollary \ref{cor:limit2Init}. 

\begin{theorem}
\label{thm:smoothLimits}
Suppose $\sw \in \TGr_0(3,8)$.
\begin{enumerate}
    \item The inverse limit $\Gr(\sw)$ is smooth of dimension 15.
    \item If $\sw$ is contained in the relative interior of any cone in the $\Sn{8}$-orbit of $\spCone$, then $\Gr(\sw)$ has 2 connected components. Otherwise, $\Gr(\sw)$ is irreducible. \item The closed immersion $\init_{\sw}\Gr_0(3,8)\hookrightarrow \Gr(\sw)$ is an isomorphism.
\end{enumerate}
\end{theorem}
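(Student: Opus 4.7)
The plan is to assemble the casework that has been carried out in the preceding sections. The $57\,344$ combinatorial types of $\pS_{\trop}(3,8)$ partition into the six sets $\sG_1,\ldots,\sG_6$ according to the combinatorial conditions listed just before the analysis of $\sG_1$. For any $\sw \in \TGr_0(3,8)$, the vector $\sw$ either represents a combinatorial type in the $\Sn{8}$-orbit of $\spCone$ (the case handled in \S\ref{sec:reducibleInitDeg}) or lies in exactly one of $\sG_1, \ldots, \sG_6$. In the latter cases, Propositions \ref{prop:G1}, \ref{prop:G2}, \ref{prop:G3}, \ref{prop:G4}, \ref{prop:G5}, and \ref{prop:G6} state precisely that $\Gr(\sw)$ is smooth and irreducible of dimension $15$; in the former case, Proposition \ref{prop:specialLimit} states that $\Gr(\sw)$ is smooth of dimension $15$ with two connected components. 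Together, these cover all $\sw \in \TGr_0(3,8)$ and establish parts (1) and (2).

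For part (3), the proof splits along the same dichotomy. When $\sw$ is not in the $\Sn{8}$-orbit of $\spCone$, we have that $\Gr(\sw)$ is smooth and irreducible of dimension $15 = 3\cdot(8-3)$, so Corollary \ref{cor:limit2Init} applies directly and yields that the closed immersion $\init_{\sw}\Gr_0(3,8)\hookrightarrow \Gr(\sw)$ from Theorem \ref{thm:closedImmersion} is an isomorphism. When $\sw$ lies in the relative interior of a cone in the $\Sn{8}$-orbit of $\spCone$, the limit $\Gr(\sw)$ has two connected components, so the irreducibility hypothesis of Corollary \ref{cor:limit2Init} fails. The initial degeneration $\init_{\sw}\Gr_0(3,8)$ is nevertheless a $15$-dimensional affine scheme (being the Gr\"obner degeneration of $\Gr_0(3,8)$), and matches the dimension of $\Gr(\sw)$; what remains is to show that its image under the closed immersion meets both connected components of $\Gr(\sw)$.

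This last point is exactly what is accomplished in the proof of Theorem \ref{thm:2connectedComponents}: by exhibiting a $\C(\!(t)\!)$-valued matrix $A_t$ whose coordinatewise tropicalization equals $\spw$ and whose exploded tropicalization $\vec{x}(A_t)$ lies in $\init_{\sw}\Gr_0(3,8)$, one sees that the composition
\begin{equation*}
\init_{\sw}\Gr_0(3,8) \longrightarrow \Gr(\sw) \longrightarrow \Gr(\spMat) \longrightarrow X(\spMat)
\end{equation*}
is surjective, and the two choices of the root $a$ of $x^2-x+1=0$ produce points in the two distinct components of $X(\spMat)$. Since the second and third maps in the composition are surjective with connected fibers (by the argument of Proposition \ref{prop:specialLimit}), the image of $\init_{\sw}\Gr_0(3,8)$ in $\Gr(\sw)$ must meet both components. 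Combined with smoothness of both source and target and the equality of dimensions, an appeal to \cite[Proposition~A.8]{CoreyGrassmannians} (as in the proof of Corollary \ref{cor:limit2Init}) applied componentwise concludes that the closed immersion is an isomorphism in this case as well.

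The main conceptual content has already been distilled into the individual $\sG_i$ propositions and the special-case theorem; the only novelty here is the componentwise upgrade of Corollary \ref{cor:limit2Init} for the disconnected case, which is immediate from the surjectivity argument of Theorem \ref{thm:2connectedComponents}. No further obstacles are anticipated.
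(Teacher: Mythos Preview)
Your proof is correct and follows essentially the same approach as the paper, which simply states that the theorem is a compilation of Propositions \ref{prop:specialLimit}, \ref{prop:G1}--\ref{prop:G6}, and Corollary \ref{cor:limit2Init}; you have merely made explicit the argument for part (3) in the disconnected case by invoking the surjectivity established in the proof of Theorem \ref{thm:2connectedComponents}. One minor wording note: the combinatorial type $\spCone$ actually lies in $\sG_2$ (Proposition \ref{prop:G2} explicitly excludes it), so your stated dichotomy is not literally exclusive, but this does not affect the validity of the argument.
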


\begin{proof}[Proofs of Theorems \ref{thm:schoenIntro} and \ref{thm:connectedInitDegIntro}]
These follow from Theorem \ref{thm:smoothLimits}. 
\end{proof}

\section{The Chow quotient $\chow{\Gr(3,8)}{H}$}
\label{sec:Chow}

Recall from the introduction that the diagonal torus of $\GL(n)$ acts on $\C^{n}$ by scaling the coordinates, and this induces an action of the diagonal torus $H\subset \PGL(n)$ on $\Gr(r,n)$. This restricts to a free action on $\Gr_0(r,n)$, and under the Gelfand-MacPherson correspondence, the quotient $X(r,n):= \Gr_0(r,n) / H$ coincides with the moduli space of projective-equivalence classes of $r$ hyperplanes in $\P^{n-1}$ in linear general position.

Throughout this section, let $\sQ = \binom{[n]}{r}$ be the uniform $(r,n)$--matroid. Let $L\subset N(\sQ)$ be the saturated subgroup from Formula \eqref{eq:lineality}, and $T_{L}\subset T(\sQ)$ its corresponding torus. Then $T_{L} \cong H$ and the action of  $T_{L}$ on $\P(\wedge^r\C^n)$ through $T(\sQ)$ restricts to the action of $H$ on $\Gr(r,n)$. Thus, we have an inclusion of Chow quotients
\begin{equation*}
    \chow{\Gr(r,n)}{H} \hookrightarrow \chow{\P(\wedge^r\C^n)}{H}. 
\end{equation*}
By \cite{KapranovSturmfelsZelevinsky}, the normalization of the Chow quotient $\chow{\P(\wedge^r\C^n)}{H}$ is the toric variety of the pointed fan $\pS(r,n)/L_{\R}$, where $\pS(r,n)$ is the secondary fan of $\Delta(r,n)$. 

Recall that $\pS_{\trop}(3,8)$ is the subfan of $\pS(3,8)$ whose support is $\TGr_0(3,8)$. As $\Trop X(3,8)$ is $\TGr_0(3,8) / L_{\R}$, the closure of $X(3,8)$ in the toric variety $X(\pS_{\trop}(3,8)/L_{\R})$ coincides with its closure in $\chow{\P(\wedge^r\C^n)}{H}$ by \cite[Proposition~2.3]{Tevelev}, and we denote this by $\overline{X}(3,8)$. This space has the same normalization as $\chow{\Gr(3,8)}{H}$. 
By Theorem \ref{thm:schoenIntro} and the fact that $\init_{\sw} \Gr_0(r,n) \cong \init_{\bar{\sw}} X(r,n) \times H$ (where $\bar{\sw}$ is the image of $\sw$ under the quotient $N(\sQ)_{\R} \to (N(\sQ)/L)_{\R}$), we conclude that $X(3,8)$ is sch\"on, and hence $\overline{X}(3,8)$ is a sch\"on compactification  of $X(3,8)$ \cite[Theorem~1.5]{LuxtonQu}. In particular, $\overline{X}(3,8)$ is normal and the boundary $B:=\overline{X}(3,8)\setminus X(3,8)$ is a divisor. 

\begin{theorem} 
\label{thm:KXBAmple}
The log-canonical divisor $K_{\overline{X}} + B$ of $\overline{X} = \overline{X}(3,8)$ is ample. In particular, $\overline{X}(3,8)$ is the log-canonical compactification of $X(3,8)$. 
\end{theorem}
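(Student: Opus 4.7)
The plan is to apply Schock's ampleness criterion \cite[Proposition~7.9]{Schock} as a black box, feeding it the three main theorems of this paper. The setup is already in place: the discussion preceding the theorem identifies $\overline{X}(3,8)$ with the closure of $X(3,8)$ inside the toric variety associated to the fan $\pS_{\trop}(3,8)/L_{\R}$, explains via the Gelfand--MacPherson correspondence and the identification $\init_{\sw}\Gr_0(3,8) \cong \init_{\bar{\sw}} X(3,8) \times H$ that Theorem \ref{thm:schoenIntro} forces $X(3,8)$ to be sch\"on, and invokes \cite[Theorem~1.5]{LuxtonQu} to conclude that $\overline{X}(3,8)$ is a sch\"on compactification---in particular normal, with divisorial boundary $B$.

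The next step is to verify the hypotheses of Schock's Proposition 7.9. This result provides a sufficient criterion for a sch\"on compactification to carry an ample log canonical divisor, phrased in terms of smoothness and irreducibility properties of the initial degenerations indexed by the maximal cones of the tropical fan. The smoothness input is exactly Theorem \ref{thm:schoenIntro} (together with Theorem \ref{thm:smoothLimits}). The irreducibility input is the sharp dichotomy provided by Theorems \ref{thm:2ConnectedComponentsInDegIntro} and \ref{thm:connectedInitDegIntro}: up to the $\Sn{8}$-symmetry there is a single cone $\spCone$ whose associated initial degeneration has two connected components, and every other cone produces an irreducible initial degeneration. Thus the failure of irreducibility is confined to one explicit combinatorial type, which Proposition 7.9 is designed to tolerate.

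Applying Schock's proposition then yields ampleness of $K_{\overline{X}} + B$, and the ``in particular'' clause follows automatically: a normal sch\"on compactification whose log canonical divisor is ample is, by construction of the log canonical model as $\Proj$ of the section ring of $K + B$, the log canonical compactification of its interior.

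The main obstacle is that Schock's earlier argument---that $X(3,8)$ is \emph{quasilinear}---breaks down precisely at the disconnected initial degeneration discovered in Theorem \ref{thm:2ConnectedComponentsInDegIntro}, so the quasilinear framework cannot be used directly. What makes Proposition 7.9 still applicable is that its hypotheses require only a controlled irreducibility structure on the initial degenerations rather than full quasilinearity; the delicate point is to confirm that the exceptional $\Sn{8}$-orbit of $\spCone$ is in fact the \emph{only} obstacle, which is exactly where the sharp classification of Theorem \ref{thm:connectedInitDegIntro} is indispensable.
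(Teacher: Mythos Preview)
There is a genuine gap: you mischaracterize what \cite[Proposition~7.9]{Schock} does. In the paper it is not invoked as an ampleness criterion that consumes smoothness and near-irreducibility of initial degenerations; it is used only to establish a purely combinatorial fact, namely that the fan $\pS_{\trop}(3,8)/L_{\R}$ is \emph{convexly disjoint}. Ampleness of $K_{\overline{X}}+B$ comes instead from \cite[Theorem~9.1]{HackingKeelTevelev2009}, which reduces the question to showing that every irreducible stratum of $\overline{X}(3,8)$ is log minimal. The argument then passes through \cite[Theorem~3.1]{HackingKeelTevelev2009} (a sch\"on very affine variety is log minimal iff it is not preserved under translation by a subtorus) and \cite[Lemma~5.2]{KatzPayne2011} (this is detected tropically), together with \cite[Lemma~3.6]{HelmKatz} identifying strata with initial degenerations up to a torus factor. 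Schock's convex-disjointness statement is exactly what feeds the tropical criterion for each stratum. None of this chain appears in your proposal.

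Your handling of the disconnected stratum is also off. Proposition~7.9 is not ``designed to tolerate'' the failure of irreducibility at $\spCone$; rather, the reduction via \cite[Theorem~9.1]{HackingKeelTevelev2009} only requires log minimality of each \emph{irreducible} stratum. For cones in the $\Sn{8}$-orbit of $\spCone$, Formula~\eqref{eq:initStratum} together with Theorem~\ref{thm:2ConnectedComponentsInDegIntro} shows that $X_{\pC}$ consists of two reduced points, each of which is trivially log minimal. Theorem~\ref{thm:connectedInitDegIntro} is needed to guarantee that this is the only cone where the stratum is reducible, so that the convex-disjointness argument covers all remaining cases. The theorems enter the proof, but not as inputs to a single black box.
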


\begin{proof}
By \cite[Theorem~9.1]{HackingKeelTevelev2009}, it suffices to show that each irreducible stratum is log minimal. As $\overline{X}(3,8)$ is a sch\"on compactification, its strata are also sch\"on by Formula \eqref{eq:initStratum} and \cite[Corollary~4.2.10]{MaclaganSturmfels2015}. By  \cite[Theorem~3.1]{HackingKeelTevelev2009}, a sch\"on very affine variety $X\subset T$ is log minimal if and only if it is not preserved under translation by a subtorus of $T$. This last property holds if and only if $\Trop(X) \subset N_{\R}$ is not preserved under translation by a rational subspace of $N_{\R}$ \cite[Lemma~5.2]{KatzPayne2011}.

Given a cone $\pC$ of $\pS_{\trop}(3,8)/L_{\R}$, denote by $X_{\pC}$ the intersection of $\overline{X}(3,8)$ with the torus orbit of $X(\pS_{\trop}(3,8)/L_{\R})$ corresponding to $\pC$. By \cite[Lemma~3.6]{HelmKatz}, there is an isomorphism
\begin{equation}
\label{eq:initStratum}
    \init_{\sw} X(3,8) \cong X_{\pC} \times (\GG_m)^{\dim \pC}
\end{equation}
where $\sw$ is any point in the relative interior of $\pC$. The  fan  $\pS_{\trop}(3,8)/L_{\R}$ is convexly disjoint by \cite[Proposition~7.9]{Schock}. This proves that the strata $X_{\pC}$ of $\overline{X}(3,8)$ which are irreducible, i.e., for cones  $\pC$ not in the $\Sn{8}$--orbit of $\spCone$ by Theorem \ref{thm:connectedInitDegIntro}, are log minimal. But the strata $X_{\pC}$ for $\pC$ in the $\Sn{8}$--orbit of $\spCone$ each consists of two disjoint points, which are individually log minimal, as required.  
\end{proof}

\bibliographystyle{abbrv}
\bibliography{bibliographie}
\label{sec:biblio}

\end{document}